\newtheorem{Theorem}{Theorem}[section]
\newtheorem{Proposition}[Theorem]{Proposition}
\newtheorem{Lemma}[Theorem]{Lemma}
\newtheorem{Definition}[Theorem]{Definition}
\def\supp{{\rm supp}\ }
\def\dist{{\rm dist}\ }
\def\bbZ{\mathbb{Z}}
\def\bbR{\mathbb{R}}
\begin{document}

\title{Weak and Strong Type Weighted Estimates for Multilinear Calder\'{o}n-Zygmund Operators\thanks{This work was supported partially by the
National Natural Science Foundation of China(10990012) and the Research Fund for the Doctoral Program
of Higher Education.}}

\author{Kangwei Li \quad and Wenchang Sun\thanks{Corresponding author.}\\
School of Mathematical Sciences and LPMC,  Nankai University,
      Tianjin~300071, China\\
Email:\, likangwei9@mail.nankai.edu.cn, sunwch@nankai.edu.cn}
\date{}
\maketitle
\begin{abstract}
In this paper, we study the weighted estimates for multilinear Calder\'{o}n-Zygmund operators 
from $L^{p_1}(w_1)\times\cdots\times L^{p_m}(w_m)$
to $L^{p}(v_{\vec{w}})$, where $1<p, p_1,\cdots,p_m<\infty$ with $1/{p_1}+\cdots+1/{p_m}=1/p$ and $\vec{w}=(w_1,\cdots,w_m)$ is a multiple $A_{\vec{P}}$ weight.
We give weak and strong type weighted estimates of mixed $A_p$-$A_\infty$ type.
Moreover, the strong type weighted estimate is sharp
 whenever
$\max_i p_i \le p'/(mp-1)$.
\end{abstract}

\textbf{Keywords.}\,\,
multilinear Calder\'{o}n-Zygmund operators; multiple $A_{\vec{P}}$ weights; weighted inequalities.

\section{Introduction and Main Results}
The weighted estimate for operators is an interesting topic in harmonic analysis.
And it has attracted many authors in this area \cite{DGGLY,Hu-Yang2009,LOPTT,Lin-Meng-Yang2010,Shi-Fu-Lu2011,Xue-Ding2009}.
In this paper, we study the weighted estimates for multilinear Calder\'{o}n-Zygmund operators with multiple $A_{\vec{P}}$ weights.

Recall that $T$ is called a multilinear Calder\'{o}n-Zygmund operator if
$T$ is initially defined on the $m$-fold product of Schwartz spaces and
taking values into the space of tempered distributions,
\[
  T: \mathscr{S}(\bbR^n)\times\cdots\times \mathscr{S}(\bbR^n)\rightarrow \mathscr{S}'(\bbR^n),
\]
and for some $1\le q_i<\infty$, it extends to a bounded multilinear operator
from $L^{q_1} \times\cdots\times L^{q_m}$ to $L^q$ , where $1/{q_1}+\cdots+1/{q_m}=1/q$,
 and if there exists a function $K$, defined off the diagonal $x=y_1=\cdots=y_m$
in $(\bbR^n)^{m+1}$, satisfying
\[
  T(f_1,\cdots,f_m)(x)=\int_{(\bbR^n)^m}K(x,y_1,\cdots,y_m)f_1(y_1)\cdots f_m(y_m)dy_1\cdots dy_m
\]
for all $x\notin \bigcap_{j=1}^m\supp f_i$;
\[
  |K(y_0,y_1,\cdots,y_m)|\le \frac{A}{(\sum_{k,l=0}^m |y_k-y_l|)^{mn}}
\]
and
\[
  |K(y_0,\cdots,y_i,\cdots,y_m)-K(y_0,\cdots,y_i',\cdots,y_m)|\le \frac{A|y_i-y_i'|^\varepsilon}{(\sum_{k,l=0}^m |y_k-y_l|)^{mn+\varepsilon}}
\]
for some $A,\varepsilon>0$ and all $0\le i\le m$, whenever $|y_i-y_i'|\le \frac{1}{2}\max_{0\le k\le m}|y_i-y_k|$.

For the theory of multilinear Calder\'on-Zygmund operators, we refer the readers to \cite{DGGLY,DGY,Gra,GT,GT2} for an overview.

The multiple $A_{\vec{P}}$ weights introduced by  Lerner, Ombrosi, P\'{e}rez, Torres and Trujillo-Gonz\'{a}lez \cite{LOPTT} are defined as follows.
Let $\vec{P}=(p_1,\cdots,p_m)$ with $1\le p_1,\cdots,p_m<\infty$ and $1/{p_1}+\cdots+1/{p_m}=1/p$. Given $\vec{w}=(w_1,\cdots, w_m)$, set
\[
  v_{\vec{w}}=\prod_{i=1}^m w_i^{p/{p_i}}.
\]
We say that $\vec{w}$ satisfies the multilinear $A_{\vec{P}}$ condition if
\[
  [\vec{w}]_{A_{\vec{P}}}:=\sup_Q \left(\frac{1}{|Q|}\int_Q v_{\vec{w}}\right)\prod_{i=1}^m\left( \frac{1}{|Q|}\int_Q w_i^{1-p_i'}\right)^{p/{p_i'}}<\infty,
\]
where $[\vec{w}]_{A_{\vec{P}}}$ is called the $A_{\vec{P}}$ constant of $\vec{w}$. When $p_i=1$, $( \frac{1}{|Q|}\int_Q w_i^{1-p_i'})^{1/{p_i'}}$ is
understood as $(\inf_Q w_i)^{-1}$. It is easy to see that in the linear case (that is,  $m = 1$) $[\vec{w}]_{A_{\vec{P}}}=[w]_{A_p}$ is the
usual $A_p$ constant. Recall that $A_\infty=\bigcup_{1\le p<\infty}A_p$
and the $A_\infty$ constant $[w]_{A_\infty}$ is defined by
\[
  [w]_{A_\infty}:=\sup_Q \frac{1}{w(Q)}\int_Q M(w\chi_Q).
\]
In \cite{LOPTT}, it was shown that for $1<p_1,\cdots,p_m<\infty$, $\vec{w}\in A_{\vec{P}}$ if and only if $w_i^{1-p_i'}\in A_{mp_i'}$ and $v_{\vec{w}}\in A_{mp}$.

For the linear case, i.e. $m=1$, the $A_p$-$A_\infty$ type estimates for Calder\'{o}n-Zygmund operators were investigated in \cite{HL}.
Notice that the main technique in \cite{HL} is an appropriate characterization which
simplifies the estimate of the weighted bounds to calculate a test condition \cite{HLMORSU,LSU}.
The advantage of their technique is that it does not rely upon the extrapolation.  In this paper,
roughly speaking, we follow the idea used in \cite{HLMORSU}. But we do not use the method such as the linearization used in that paper.
Instead, we  use the idea of Dami\'an, Lerner and P\'erez \cite{DLP} and reduce the problem to consider the following type of operators,
\[
  A^{}_{\mathscr{D},\mathcal{S}}(\vec{f})=\sum_{j,k}\bigg( \prod_{i=1}^m \frac{1}{|Q_{j,k}|}\int_{Q_{j,k}}f_i(y_i)dy_i\bigg)\chi^{}_{Q_{j,k}},
\]
where $\vec{f}:=(f_1,\cdots, f_m)$, $\mathscr{D}$ is a dyadic grid and $\mathcal{S}:=\{Q_{j,k}\}$ is a sparse family in $\mathscr{D}$ (see Section~\ref{sec:s1}
for definitions of these notations).

In the linear case, Lerner \cite{L1,L2} investigated this type of operators and gave a simple proof for the $A_2$ conjecture.
For the fundamental theory of $A_p$ weights and the history of the $A_2$ conjecture, we refer the readers to \cite{B,Cruz-Uribe,H,HP,LPR,P1,P2}
for an overview.

In \cite{DLP}, Dami\'an, Lerner and P\'erez studied the sharp weighted bound of multilinear maximal function of mixed $A_p$-$A_\infty$ type and gave
a multilinear version of the $A_2$ conjecture. In \cite{LS}, the authors estimated the weighted bound of the multilinear maximal function
and Calder\'on-Zygmund operators in terms of $[\vec{w}]_{A_{\vec{P}}}$.

In this paper, we estimate the weighted bound of multilinear Calder\'on-Zygmund operators of mixed $A_p$-$A_\infty$ type.
We give the sharp estimate for some cases. To be
precise, the main result of this paper is the following.

\begin{Theorem}\label{thm:main}
Let $T$ be a multilinear Calder\'{o}n-Zygmund operator, $\vec{P}=(p_1,\cdots,p_m)$ with $1/p=1/{p_1}+\cdots+1/{p_m}$
and $1<p,p_1,\cdots,p_m<\infty$. Suppose that $\vec{w}=(w_1,\cdots,w_m)$ with $\vec{w}\in A_{\vec{P}}$.
Then
\begin{eqnarray}
\|T(\vec{f})\|_{L^p(v_{\vec{w}})}
&\le& C_{m,n,\vec{P},T}[\vec{w}]_{A_{\vec{P}}}^{1/p}\bigg(\prod_{i=1}^m [\sigma_i]_{A_\infty}^{1/{p_i}}
  \nonumber \\
&&\qquad
+ [v_{\vec{w}}]_{A_\infty}^{1/{p'}}\sum_{i'=1}^m \prod_{i\neq i'} [\sigma_i]_{A_\infty}^{1/{p_i}} \bigg)
\prod_{i=1}^m \|f_i\|_{L^{p_i}(w_i)}, \label{eq:main:1}
\end{eqnarray}
where $\sigma_i=w_i^{1-p_i'}$, $i=1,\cdots,m$.
The result is sharp in the sense that the exponents can not be improved whenever
$\max_i p_i \le p'/(mp-1)$.
\end{Theorem}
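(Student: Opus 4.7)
The overall strategy is to follow the paradigm of Dami\'an-Lerner-P\'erez \cite{DLP} combined with the testing approach of Hyt\"onen-Lacey \cite{HL,HLMORSU}. The first step is to reduce the estimate to the positive sparse operators $A_{\mathscr{D},\mathcal{S}}$: by Lerner's mean-oscillation decomposition together with its multilinear extension in \cite{DLP,LS}, one has a norm-wise domination
\[
\|T(\vec{f})\|_{L^p(v_{\vec{w}})}\le C_{m,n}\sup_{\mathscr{D},\mathcal{S}}\|A_{\mathscr{D},\mathcal{S}}(\vec{f})\|_{L^p(v_{\vec{w}})},
\]
with the supremum taken over a finite collection of adjacent dyadic grids and sparse families $\mathcal{S}\subset\mathscr{D}$. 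After this reduction, the theorem becomes a claim purely about the (positive) multilinear sparse operator, so all the absolute values and cancellations in $T$ are removed from the picture.

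Next, I dualize: for nonnegative test functions,
\[
\int A_{\mathscr{D},\mathcal{S}}(\vec{f})\,g\,dx=\sum_{Q\in\mathcal{S}}|Q|\prod_{i=1}^m\Big(\tfrac{1}{|Q|}\int_Q f_i\Big)\Big(\tfrac{1}{|Q|}\int_Q g\Big).
\]
Performing the substitutions $f_i=h_i\sigma_i$ and $g=\phi\,v_{\vec{w}}$ (so that $\|h_i\|_{L^{p_i}(\sigma_i)}=\|f_i\|_{L^{p_i}(w_i)}$ and $\|\phi\|_{L^{p'}(v_{\vec{w}})}$ controls the dual norm), the pairing becomes a sum of averages against the $\sigma_i$ and $v_{\vec{w}}$ measures. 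I then use sparseness to replace $|Q|$ by $2|E_Q|$ with pairwise disjoint $E_Q\subset Q$, factor out the product $\prod_i\langle\sigma_i\rangle_Q^{p/p_i'}\cdot\langle v_{\vec{w}}\rangle_Q$ and bound it by $[\vec{w}]_{A_{\vec{P}}}$, and control what remains by dyadic maximal operators in the $\sigma_i$- and $v_{\vec{w}}$-measures. The sharp $A_\infty$ constants enter through the Hyt\"onen-P\'erez sharp weak reverse H\"older inequality, which for each $\nu\in A_\infty$ provides an exponent $r_\nu\sim 1+1/[\nu]_{A_\infty}$ such that $\langle\nu^{r_\nu}\rangle_Q^{1/r_\nu}\lesssim\langle\nu\rangle_Q$; applying it to each $\sigma_i$ and to $v_{\vec{w}}$, then invoking the $L^{p_i}(\sigma_i)$-boundedness of the dyadic maximal function with explicit operator norm, produces the two distinct terms in (\ref{eq:main:1}). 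The first term arises when the $A_{\vec{P}}$ factor is distributed symmetrically across the $m$ coordinates, while the second arises when the testing is performed on the dual side, so that the exponent $1/p'$ attaches to $[v_{\vec{w}}]_{A_\infty}$.

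The main obstacle I expect is the multilinear bookkeeping. In the linear case only two $A_\infty$ constants interact with the single $A_p$ constant, whereas here there are $m+1$ of them; distributing them so that the exponents $1/p_i$ and $1/p'$ come out simultaneously correct and sharp requires doing the argument $m+1$ times in parallel, once as a genuine multilinear bound of the sparse operator and $m$ times after invoking the two-weight testing characterization of \cite{HLMORSU} on the $i'$-th coordinate. Handling the range where some $p_i$ is close to $1$ demands care, as the reverse H\"older exponent degenerates and one has to absorb the loss into the appropriate $A_\infty$ factor.

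For sharpness in the range $\max_i p_i\le p'/(mp-1)$, I would use the standard power-weight examples $w_i(x)=|x|^{a_i}$ with $a_i$ chosen so that $[\vec{w}]_{A_{\vec{P}}}$, $[\sigma_i]_{A_\infty}$ and $[v_{\vec{w}}]_{A_\infty}$ all blow up as a single parameter tends to the critical endpoint, compute each constant explicitly as a negative power of that parameter, and then test $T$ (or a model Riesz transform with multilinear structure) against an indicator of a small ball. Matching the rate of blow-up on both sides of (\ref{eq:main:1}) shows that none of the exponents $1/p$, $1/p_i$, $1/p'$ can be lowered under the stated constraint on $\max_i p_i$.
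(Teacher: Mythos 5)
Your first step, dominating $T$ by sparse operators $A_{\mathscr{D},\mathcal{S}}$ using the multilinear version of Lerner's oscillation formula from \cite{DLP}, matches the paper exactly (this is (\ref{eq:tlesa})). Your sharpness discussion is also on the right track: the paper uses the multilinear Riesz-type model operator $R_1$ with $f_i=|x|^{\varepsilon-n}\chi_{(0,1]^n}$ and $w_i=|x|^{(n-\varepsilon)(p_i-1)}$, which is the power-weight scheme you describe.

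The genuine gap is in the middle, in the estimate of $\|A_{\mathscr{D},\mathcal{S}}(\vec{f}\vec{\sigma})\|_{L^p(v_{\vec{w}})}$. You propose to inject the $A_\infty$ constants via the Hyt\"onen--P\'erez sharp weak reverse H\"older inequality after dualizing; the paper never invokes reverse H\"older. Instead, the $A_\infty$ factors enter entirely through the Carleson-type sparse-sum bound of Lemma~\ref{lm:l0}, $\sum_{Q\in\mathcal Q}w(Q)\le 2[w]_{A_\infty}w(S)$, applied separately to $\sigma_1,\dots,\sigma_m$ and to $v_{\vec{w}}$. More seriously, you describe the hard combinatorial core only as "doing the argument $m+1$ times in parallel" and acknowledge this is the step you cannot pin down. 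That step is precisely where the content of the proof lies, and the paper's mechanism is quite different from what you sketch. It is not a dualize-then-reverse-H\"older scheme but a distributional level-set decomposition: with $\Omega_l=\{A_{\mathscr{D},\mathcal{S}}>2^l\}$ and maximal cubes $\mathcal{Q}_l$, the integral $\int_{E_l(Q)}A_{\mathscr{D},\mathcal{S}}(f_1\sigma_1 1_Q,f_2\sigma_2 1_Q)v_{\vec{w}}$ is split into $2^m$ pieces according to whether each $f_i$ is restricted to $Q\setminus\Omega_{l+2}$ or $Q\cap\Omega_{l+2}$ (for $m=2$ this gives $I_1,\dots,I_4$, with $I_4$ further split four ways). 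Each piece is handled with two technical tools you do not mention and which cannot be bypassed: a $(\sigma_1,\sigma_2)$-corona decomposition that makes Lemma~\ref{lm:l1} work, and the principal-cube construction from \cite{HLMORSU} together with its Carleson packing bounds (\ref{eq:e9})--(\ref{eq:e15}), which absorb the "far" contributions into $\|f_i\|_{L^{p_i}(\sigma_i)}$. Finally, the dual-slot term $[v_{\vec{w}}]_{A_\infty}^{1/p'}$ comes from applying Lemma~\ref{lm:l1} together with the weight-duality identity $[\vec w^i]_{A_{\vec P^i}}=[\vec w]_{A_{\vec P}}^{p_i'/p}$ of Lemma~\ref{lm:w}, not from reverse H\"older on the dual weight. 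Without the corona decomposition, the principal cubes, the level-set splitting, and Lemma~\ref{lm:w}, the $m+1$ products of $A_\infty$ constants do not materialize, so the proposal as written would not close.
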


For the weak type estimates, we get a similar result.
\begin{Theorem}\label{thm:m1}
Let $T$ be a multilinear Calder\'{o}n-Zygmund operator, $\vec{P}=(p_1,\cdots,p_m)$ with $1/p=1/{p_1}+\cdots+1/{p_m}$
and $1<p,p_1,\cdots,p_m<\infty$. Suppose that $\vec{w}:=(w_1,\cdots,w_m)\in A_{\vec{P}}$.
Then we have
\[
\|T(\vec{f})\|_{L^{p,\infty}(v_{\vec{w}})} \le C_{m,n,\vec{P},T}[\vec{w}]_{A_{\vec{P}}}^{1/p}[v_{\vec{w}}]_{A_\infty}^{1/{p'}}
 \bigg(\sum_{i'=1}^m \prod_{i\neq i'} [\sigma_i]_{A_\infty}^{1/{p_i}} \bigg)
\prod_{i=1}^m \|f_i\|_{L^{p_i}(w_i)}.
\]
\end{Theorem}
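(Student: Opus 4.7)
The plan is to follow the template of Theorem~\ref{thm:main}, with three adaptations for the weak-type norm. First, by the pointwise sparse domination already used in the proof of Theorem~\ref{thm:main} (coming from the multilinear Dami\'an--Lerner--P\'erez reduction), it suffices to prove the stated inequality with the sparse operator $A^{}_{\mathscr{D},\mathcal{S}}(\vec{f})$ in place of $T(\vec{f})$. Second, I pass to $L^{p,\infty}(v_{\vec{w}})$ via the standard duality
\[
\|g\|_{L^{p,\infty}(v_{\vec{w}})}\sim\sup_{0<v_{\vec{w}}(E)<\infty}v_{\vec{w}}(E)^{1/p-1}\int_E g\,dv_{\vec{w}},
\]
valid for $g\ge 0$ and $1<p<\infty$. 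This reduces the task to bounding $\int_E A^{}_{\mathscr{D},\mathcal{S}}(\vec{f})\,v_{\vec{w}}\,dx$ by $v_{\vec{w}}(E)^{1/p'}\prod_i\|f_i\|_{L^{p_i}(w_i)}$ times the claimed constants.

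Writing $f_i=h_i\sigma_i$ (so that $\|f_i\|_{L^{p_i}(w_i)}=\|h_i\|_{L^{p_i}(\sigma_i)}$) and using $\sum_i 1/p_i+1/p'=1$, a short calculation from the definition of $[\vec{w}]_{A_{\vec{P}}}$ gives
\[
\prod_{i=1}^m\frac{\sigma_i(Q)}{|Q|}\cdot v_{\vec{w}}(Q\cap E)\le [\vec{w}]_{A_{\vec{P}}}^{1/p}\bigg(\prod_{i=1}^m\sigma_i(Q)^{1/p_i}\bigg)v_{\vec{w}}(Q)^{1/p'}\langle\chi_E\rangle^{v_{\vec{w}}}_Q,
\]
where $\langle\cdot\rangle^{\mu}_Q$ denotes the $\mu$-average over $Q$. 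Substituting back, the matter is reduced to showing that
\[
\mathcal{T}:=\sum_{Q\in\mathcal{S}}\bigg(\prod_{i=1}^m\langle h_i\rangle^{\sigma_i}_Q\bigg)\langle\chi_E\rangle^{v_{\vec{w}}}_Q\bigg(\prod_{i=1}^m\sigma_i(Q)^{1/p_i}\bigg)v_{\vec{w}}(Q)^{1/p'}
\]
is bounded by $C[v_{\vec{w}}]_{A_\infty}^{1/p'}\bigl(\sum_{i'=1}^m\prod_{i\ne i'}[\sigma_i]_{A_\infty}^{1/p_i}\bigr)\prod_{i}\|h_i\|_{L^{p_i}(\sigma_i)}\cdot v_{\vec{w}}(E)^{1/p'}$; combining this with the factor $v_{\vec{w}}(E)^{1/p-1}$ out front and taking the supremum over $E$ then yields the weak-type estimate.

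The heart of the argument is thus the bound for $\mathcal{T}$ with sharp $A_\infty$ dependence. A direct H\"older inequality with exponents $p_1,\ldots,p_m,p'$ followed by the standard Carleson embedding would produce the full product $\prod_{i=1}^m[\sigma_i]_{A_\infty}^{1/p_i}\cdot[v_{\vec{w}}]_{A_\infty}^{1/p'}$, which is strictly larger than the sum required. Instead, for each distinguished index $i'\in\{1,\ldots,m\}$ I would carry out a parallel stopping-time (principal cubes) decomposition with stopping families for $\chi_E$ relative to $v_{\vec{w}}$ and for each $h_i$ ($i\ne i'$) relative to $\sigma_i$, leaving the index $i'$ free. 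Inside each stopping interval the averages $\langle\chi_E\rangle^{v_{\vec{w}}}_Q$ and $\langle h_i\rangle^{\sigma_i}_Q$ ($i\ne i'$) are essentially constant; the sharp Hyt\"onen--P\'erez Carleson estimate (based on $\mu(Q)\lesssim[\mu]_{A_\infty}\mu(E_Q)$ for $\mu\in A_\infty$ and $Q$ in a sparse family) then delivers the factors $\prod_{i\ne i'}[\sigma_i]_{A_\infty}^{1/p_i}$ and $[v_{\vec{w}}]_{A_\infty}^{1/p'}$, while the free index $i'$ is controlled via the universal $L^{p_{i'}}(\sigma_{i'})$-boundedness of the dyadic weighted maximal operator $M^{\sigma_{i'}}_{\mathscr{D}}$, which introduces no $[\sigma_{i'}]_{A_\infty}$ dependence. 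Summing the resulting estimates over $i'=1,\ldots,m$ gives the required bound for $\mathcal{T}$.

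The main technical obstacle is this last step: arranging the parallel stopping-times so that exactly $m$ of the $m+1$ relevant measures enter with their $A_\infty$ constants while the $(i')$-th is absorbed by the universal dyadic maximal bound. This is the precise multilinear analogue of the dual-side argument in the proof of Theorem~\ref{thm:main}, with the test function $g\in L^{p'}(v_{\vec{w}})$ replaced by $\chi_E$, and the cancellation $v_{\vec{w}}(E)^{1/p-1}\cdot v_{\vec{w}}(E)^{1/p'}=1$ at the end is what makes the weak-type norm appear on the left.
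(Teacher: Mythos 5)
Your proposal takes a genuinely different route from the paper's. The paper reduces the weak-type estimate to a local Sawyer-type testing condition through a Whitney/good-$\lambda$ argument (Lemmas~\ref{lm:weak} and~\ref{lm:la}), then verifies that testing condition with a single application of Lemma~\ref{lm:l2} to the permuted weight tuple $\vec{w}^i$ from Lemma~\ref{lm:w}. You instead invoke the $L^{p,\infty}$ duality formula $\|g\|_{L^{p,\infty}(v_{\vec{w}})}\sim\sup_E v_{\vec{w}}(E)^{-1/p'}\int_E g\,v_{\vec{w}}\,dx$ and attack the resulting sparse form $\mathcal{T}$ directly. The duality formula is correct for $1<p<\infty$ and nonnegative $g$, the substitution $f_i=h_i\sigma_i$ is standard, and the algebra extracting $[\vec{w}]_{A_{\vec{P}}}^{1/p}$ checks out (it is an identity, using $1/p+\sum_i 1/p_i'=m$). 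So the reduction to the bound on $\mathcal{T}$ is sound.

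The step that is not sound is the claimed bound on $\mathcal{T}$. You assert that the distinguished index $i'$ can be left free and ``controlled via the universal $L^{p_{i'}}(\sigma_{i'})$-boundedness of the dyadic weighted maximal operator $M^{\sigma_{i'}}_{\mathscr{D}}$, which introduces no $[\sigma_{i'}]_{A_\infty}$ dependence.'' This does not work as stated. After H\"older, the free index contributes a Carleson sum $\sum_{Q\in\mathcal{S}}(\langle h_{i'}\rangle^{\sigma_{i'}}_Q)^{p_{i'}}\sigma_{i'}(Q)$ over the \emph{full sparse family} $\mathcal{S}$, and transferring this to $\|M^{\sigma_{i'}}_{\mathscr{D}} h_{i'}\|_{L^{p_{i'}}(\sigma_{i'})}^{p_{i'}}$ via the Carleson embedding requires a $\sigma_{i'}$-Carleson condition on $\mathcal{S}$, which by Lemma~\ref{lm:l0} costs exactly $[\sigma_{i'}]_{A_\infty}$. (The universal constant Carleson property holds only for the \emph{stopping family} built from $(h_{i'},\sigma_{i'})$, not for the sparse family $\mathcal{S}$.) Moreover, if one instead runs the full parallel stopping for all $m+1$ measures and sorts the cubes of $\mathcal{S}$ by their innermost stopping parent, the subcollection whose innermost parent belongs to the $v_{\vec{w}}$-stopping family produces an additional term $\prod_{i=1}^m[\sigma_i]_{A_\infty}^{1/p_i}$ carrying no $[v_{\vec{w}}]_{A_\infty}$; this term appears in the strong bound of Theorem~\ref{thm:main} but is precisely what Theorem~\ref{thm:m1} omits. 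Removing it requires exploiting that the dual function is the indicator $\chi_E$ in a nontrivial way — this is exactly what Lemma~\ref{lm:l2} does (the input $\chi_S$ kills the $\prod_i[\sigma_i]$ term), and its proof requires the corona-decomposition machinery of Section~\ref{sec:s3}, not a simple H\"older-plus-stopping argument. As it stands, the proposal would at best recover the strong-type constant, not the weak-type one.
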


In the rest of this paper, we give proofs for the main results.
To avoid cumbersome notations, we only prove Theorems~\ref{thm:main} and \ref{thm:m1} for the case $m=2$. And the general case can be proved similarly but with more complicated symbols.

\section{Preliminaries}\label{sec:s1}
In this section, we collect some notations and preliminary results.
Recall that the standard dyadic grid in $\bbR^n$ consists of the cubes
\[
  [0,2^{-k})^n+2^{-k}j,\qquad k\in\bbZ, j\in\bbZ^n.
\]
Denote the standard dyadic grid by $\mathcal{D}$.

By a general dyadic grid $\mathscr{D}$ we mean a collection of cubes with the following
properties: (i) for any $Q\in\mathscr{D}$ its sidelength $l_Q$ is of the form $2^k$, $k\in\bbZ$; (ii)
 $Q\cap R \in \{Q,R,\emptyset\}$ for any $Q,R\in\mathscr{D}$; (iii) the cubes of a fixed sidelength $2^k$ form a partition
of $\bbR^n$.

We say that  $\mathcal{S}:=\{Q_{j,k}\}$ is a sparse family of cubes if:
\begin{enumerate}
\item for each fixed $k$ the cubes $Q_{j,k}$ are pairwise disjoint;
\item if $\Gamma_k=\bigcup_j Q_{j,k}$, then $\Gamma_{k+1}\subset \Gamma_k$;
\item $|\Gamma_{k+1}\bigcap Q_{j,k}|\le \frac{1}{2}|Q_{j,k}|$.
\end{enumerate}
For any $Q_{j,k}\in\mathcal{S}$, we define $E(Q_{j,k})=Q_{j,k}\setminus \Gamma_{k+1}$.
Then the sets $E(Q_{j,k})$ are pairwise disjoint and $|E(Q_{j,k})|\ge \frac{1}{2}|Q_{j,k}|$.

In \cite{DLP}, Dami\'an, Lerner and P\'erez proved that for any Banach function space $\mathcal{X}$ over $\bbR^n$ equipped with Lebesgue measure,
\begin{equation}\label{eq:x}
 \|T(\vec{f})\|_{\mathcal{X}}\le C\sup_{\mathscr{D},\mathcal{S}}\|A_{\mathscr{D},\mathcal{S}}(|\vec{f}|)\|_{\mathcal{X}},
\end{equation}
where $|\vec{f}|=(|f_1|,\cdots, |f_m|)$
and the supremum is taken over arbitrary dyadic grids $\mathscr{D}$ and sparse
families $\mathcal S \subset \mathscr D$.
Specially, for $\mathcal{X}=L^p(v_{\vec{w}})$, $1\le p<\infty$,
\begin{equation}\label{eq:tlesa}
\|T(\vec{f})\|_{L^p(v_{\vec{w}})}\le C\sup_{\mathscr{D},\mathcal{S}}\|A_{\mathscr{D},\mathcal{S}}(|\vec{f}|)\|_{L^p(v_{\vec{w}})}.
\end{equation}

Let $\mathbb{E}_{Q}^\sigma f:=\sigma(Q)^{-1}\int_Q f\sigma$. We introduce the principal cubes \cite{HLMORSU}.
\begin{Definition}[Principal cubes]\label{def:d1}
We form the collection $\mathcal{G}$ of principal cubes as follows.
Let $\mathcal{G}_0:=\{ \overline{Q}\}$ (the maximal dyadic cube that we consider). And inductively,
\[
  \mathcal{G}_k:=\bigcup_{G\in \mathcal{G}_{k-1}}\{G'\subset G: \mathbb{E}_{G'}^\sigma |f|>4 \mathbb{E}_{G}^\sigma |f|, G' \mbox{is a maximal such dyadic cube}\}.
\]
Let $\mathcal{G}:=\bigcup_{k=0}^\infty \mathcal{G}_k$.
For any dyadic $Q(\subset \overline{Q})$, we let
\[
  \Gamma(Q):=\mbox{the minimal principal cube containing $Q$}.
\]
\end{Definition}
It follows from the definition that
\[
  \mathbb{E}_{Q}^\sigma |f|\le 4\mathbb{E}_{\Gamma(Q)}^\sigma |f|.
\]

From the idea of principal cubes, we have the following decomposition, which is similar to the ordinary corona decomposition (See \cite{LPR, PTV}).

Let $\mathcal{Q}\subset\mathscr{D}$ be any collection of dyadic cubes such that for any $Q\in\mathcal{Q}$, there exists a maximal cube $Q_{max}\in \mathcal{Q}$
which contains $Q$.
Let $\sigma_1dx$ and $\sigma_2dx$ be two positive measures.
We call $(\mathcal{L}: \mathcal{Q}(L)): \mathcal{L}\subset \mathcal{Q}$
a ($\sigma_1$, $\sigma_2$)-corona decomposition of $\mathcal{Q}$ if these conditions hold.
\begin{enumerate}
\item For each $Q\in\mathcal{Q}$ there is a member of $\mathcal{L}$ that contains $Q$. Let $\lambda(Q)\in \mathcal{L}$
denote the minimal cube which contains $Q$. Then we have
\[
  4\frac{\sigma_1(\lambda(Q))\sigma_2(\lambda(Q))}{|\lambda(Q)|^2}\ge \frac{\sigma_1(Q)\sigma_2(Q)}{|Q|^2}.
\]
\item For all $L',L\in \mathcal{L}$ with $L'\subsetneq L$,
\[
  \frac{\sigma_1(L')\sigma_2(L')}{|L'|^2}> 4\frac{\sigma_1(L)\sigma_2(L)}{|L|^2}.
\]
\end{enumerate}
We set $\mathcal{Q}(L):=\{Q\in\mathcal{Q}: \lambda(Q)=L\}$.
The collection  $\mathcal{Q}(L)$ forms a partition of  $\mathcal{Q}$.

Note that $\{Q\times Q:\, Q\in\mathcal Q\}$ is a collection of dyadic cubes in
  $\bbR^{2n}$. Therefore, the ($\sigma_1$, $\sigma_2$)-corona decomposition of $\mathcal Q$
   is in fact the ordinary corona decomposition of $\{Q\times Q:\, Q\in\mathcal Q\}$
   with respect to the measure $\sigma_1\times \sigma_2$.

Now we introduce some preliminary results.
The following result is obvious and we omit the proof.
\begin{Lemma}\label{lm:sparse}
Any sub-family of a sparse family is also sparse.
\end{Lemma}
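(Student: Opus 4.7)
The plan is to reindex the sub-family $\mathcal{S}'$ by the notion of depth inherited from $\mathcal{S}'$ itself and verify the three defining conditions directly. For each cube $R\in\mathcal{S}'$, define its depth $d'(R)$ to be the number of cubes of $\mathcal{S}'$ strictly containing it; this is a finite nonnegative integer because the same count within the larger family $\mathcal{S}$ is already finite. Assign each cube to a ``level'' by its depth, relabel as $\{Q'_{j,k}\}_j$ for depth $k$, and put $\Gamma'_k:=\bigcup_j Q'_{j,k}$.

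Conditions (i) and (ii) follow immediately from dyadic nesting and the definition of depth. Two distinct cubes in $\mathcal{S}'$ of common depth $k$ cannot be nested (nesting would force the inner cube to have depth at least $k+1$), so condition (i) holds. Each cube of depth $k+1$ is contained in the smallest of its $k+1$ strict $\mathcal{S}'$-ancestors, which has depth exactly $k$, so $\Gamma'_{k+1}\subset\Gamma'_k$, giving (ii).

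For condition (iii), fix $Q\in\mathcal{S}'$ of depth $k$ and observe that dyadic nesting forces every depth-$(k+1)$ cube of $\mathcal{S}'$ meeting $Q$ to lie strictly inside $Q$, so
\[
\Gamma'_{k+1}\cap Q \;\subset\; \bigcup_{R\in\mathcal{S},\,R\subsetneq Q}R.
\]
Writing $Q=Q_{j_0,k_0}$ in the original indexing of $\mathcal{S}$, the key observation---and the one mildly subtle step---is that any $R\in\mathcal{S}$ strictly contained in $Q$ must carry a generation index $k_1>k_0$: $k_1=k_0$ is ruled out by condition (i) for $\mathcal{S}$, and $k_1<k_0$ would force $R\subset Q\subset\Gamma_{k_0}\subset\Gamma_{k_1+1}$, violating $|\Gamma_{k_1+1}\cap R|\le|R|/2$ from condition (iii) for $\mathcal{S}$. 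Hence $R\subset\Gamma_{k_1}\subset\Gamma_{k_0+1}$, so the union above lies inside $\Gamma_{k_0+1}\cap Q$, which has measure at most $|Q|/2$ by the sparseness of $\mathcal{S}$; this completes the verification and confirms that no real obstacle remains beyond this generation comparison.
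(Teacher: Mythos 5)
The paper states this lemma without proof, explicitly calling it ``obvious,'' so I will evaluate your argument on its own terms. Your verification of the three conditions via the depth re-indexing is correct, and you have isolated the genuinely subtle point: that any $R\in\mathcal{S}$ with $R\subsetneq Q=Q_{j_0,k_0}$ must carry a generation index $k_1>k_0$ and hence lies inside $\Gamma_{k_0+1}$. That comparison is exactly what the paper itself uses implicitly in the proof of Lemma~\ref{lm:l0} when it asserts that $E(Q)=Q\setminus\bigcup_{Q'\in\mathcal{Q},\,Q'\subsetneq Q}Q'$ satisfies $|E(Q)|\ge\frac{1}{2}|Q|$, so you have filled in the right details.

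One assertion you state without proof is that the depth of a cube within $\mathcal{S}$ is finite. This does not follow from the definition of a sparse family alone if the generation index $k$ is allowed to range over all of $\bbZ$: an infinite decreasing chain of nested dyadic cubes with geometrically shrinking measure satisfies conditions (i)--(iii) yet gives every member infinite depth. The finiteness does hold under the standard convention $k\ge 0$, or under convention (iii) of Section~\ref{sec:s3} that all cubes lie in a fixed maximal dyadic $\overline{Q}$, and in either case it is a consequence of the very generation-index comparison you prove later: every strict $\mathcal{S}$-ancestor of $Q_{j_0,k_0}$ has generation index strictly below $k_0$, with at most one per level by condition (i). You should state this hypothesis and deduce finiteness rather than assert it.

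You could also avoid the re-indexing entirely. For a dyadic family $\mathcal{Q}$ set $E_{\mathcal{Q}}(Q):=Q\setminus\bigcup_{Q'\in\mathcal{Q},\,Q'\subsetneq Q}Q'$. The sets $E_{\mathcal{Q}}(Q)$ are automatically pairwise disjoint (if $x\in E_{\mathcal{Q}}(Q_1)\cap E_{\mathcal{Q}}(Q_2)$ then $Q_1,Q_2$ are nested, and the smaller one, being a strict $\mathcal{Q}$-subcube of the larger, removes $x$ from the larger $E$-set), and sparsity of $\mathcal{Q}$ in the sense the paper actually uses is exactly $|E_{\mathcal{Q}}(Q)|\ge\frac{1}{2}|Q|$. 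Since $\mathcal{S}'\subset\mathcal{S}$ implies $E_{\mathcal{S}'}(Q)\supset E_{\mathcal{S}}(Q)$ for every $Q\in\mathcal{S}'$, the conclusion is immediate. This is shorter, it is the form in which sparsity is invoked throughout the paper (see Lemma~\ref{lm:l0}), and it sidesteps the depth-finiteness question altogether.
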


Next we give a property of $A_\infty$ weights on sparse family.
\begin{Lemma}\label{lm:l0}
Let $w\in A_\infty$ and $\mathcal{Q}\subset \mathscr D$ be a sparse family. Suppose that there is
some $S\in\mathscr D$ such that
 any cube  in $\mathcal{Q}$ is contained in $S$.
 Then
\[
  \sum_{Q\in\mathcal{Q}} w(Q)\le 2\int_S M(w 1_S)(x)dx \le 2 [w]_{A_\infty}w(S).
\]
\end{Lemma}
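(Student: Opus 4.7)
The plan is to exploit the sparseness in the standard way, turning a sum over cubes into an integral of the maximal function, and then apply the definition of $[w]_{A_\infty}$.

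First I would recall that because $\mathcal{Q}$ is sparse, each $Q\in\mathcal{Q}$ carries a set $E(Q)\subset Q$ with the properties that the $E(Q)$ are pairwise disjoint and $|Q|\le 2|E(Q)|$. I would rewrite
\[
w(Q)=\frac{w(Q)}{|Q|}\,|Q|\le 2\,\frac{w(Q)}{|Q|}|E(Q)|.
\]
Since each $Q\subset S$, for every $x\in E(Q)\subset Q$ we have $\tfrac{w(Q)}{|Q|}=\tfrac{1}{|Q|}\int_Q w\mathbf{1}_S\le M(w\mathbf{1}_S)(x)$. Therefore $w(Q)\le 2\int_{E(Q)}M(w\mathbf{1}_S)(x)\,dx$.

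Next I would sum over $Q\in\mathcal{Q}$ and use disjointness of the $E(Q)$ together with the inclusion $\bigsqcup_{Q\in\mathcal{Q}}E(Q)\subset S$:
\[
\sum_{Q\in\mathcal{Q}}w(Q)\le 2\sum_{Q\in\mathcal{Q}}\int_{E(Q)}M(w\mathbf{1}_S)(x)\,dx\le 2\int_S M(w\mathbf{1}_S)(x)\,dx,
\]
which gives the first inequality. The second inequality is then immediate from the very definition
\[
[w]_{A_\infty}=\sup_Q\frac{1}{w(Q)}\int_Q M(w\mathbf{1}_Q)\,dx,
\]
specialized at $Q=S$.

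There is really no obstacle here: the argument is a textbook sparse-family estimate, and the main point is simply to identify that $\tfrac{w(Q)}{|Q|}$ is bounded pointwise on $E(Q)$ by $M(w\mathbf{1}_S)$ because $Q\subset S$. The only small care needed is to use $M(w\mathbf{1}_S)$ rather than $M w$, so that the last step matches the definition of $[w]_{A_\infty}$ applied to the cube $S$.
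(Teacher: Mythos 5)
Your proposal is correct and follows essentially the same route as the paper's proof: use the sparse property to extract pairwise disjoint sets $E(Q)\subset Q$ with $|E(Q)|\ge\frac{1}{2}|Q|$, bound $w(Q)\le 2\frac{w(Q)}{|Q|}|E(Q)|$, observe that $\frac{w(Q)}{|Q|}\le M(w\mathbf{1}_S)(x)$ for $x\in E(Q)$ since $Q\subset S$, sum using disjointness, and finish with the definition of $[w]_{A_\infty}$. You spell out the pointwise maximal-function comparison a bit more explicitly than the paper does, but the argument is the same.
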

\begin{proof}
Set
\[
  E(Q)=Q\setminus \bigcup_{Q'\in\mathcal Q, Q'\subsetneq Q} Q'.
\]
By the sparse property, $E(Q)$ are disjoint and $|E(Q)|\ge \frac{1}{2}|Q|$.
Then we have
\begin{eqnarray*}
\sum_{Q\in\mathcal{Q}} w(Q)\le 2\sum_{Q\in\mathcal{Q}} \frac{w(Q)}{|Q|}|E(Q)|\le 2\int_S M(w 1_S)(x)dx \le 2 [w]_{A_\infty}w(S).
\end{eqnarray*}
\end{proof}

By (\ref{eq:tlesa}), we have to estimate
$\|A_{\mathscr{D},\mathcal{S}}(|\vec{f}|)\|_{L^p(v_{\vec{w}})}$. First, we consider a special case.

\begin{Lemma}\label{lm:l1}
Suppose that $(w_1,w_2)\in A_{\vec{P}}$, where $\vec{P}=(p_1,p_2)$  and $1/p=1/{p_1}+1/{p_2}$. Let $\mathcal{Q}\subset \mathscr{D}$ be
a sparse family. Suppose that there is some $S\in\mathscr D$ such that
 any cube  in $\mathcal{Q}$ is contained in $S$. Set
\[
  A^{}_{\mathscr{D},\mathcal{Q}}(\vec{f})=\sum_{Q\in\mathcal{Q}}\bigg( \prod_{i=1}^2\frac{1}{|Q|}\int_{Q}f_i(y_i)dy_i\bigg)\chi^{}_{Q}.
\]
Then we have
\begin{eqnarray*}
&&\left\|A_{\mathscr{D},\mathcal{Q}}(\sigma_1 1_S, \sigma_2 1_S)\right\|_{L^p(v_{\vec{w}})}\\
&&\quad\lesssim [\vec{w}]_{A_{\vec{P}}}^{1/p}
 \bigg(\sum_{Q\in\mathcal{Q}}\sigma_1(Q)\bigg)^{1/{p_1}}\cdot\bigg(\sum_{Q\in\mathcal{Q}}\sigma_2(Q)\bigg)^{1/{p_2}}.
\end{eqnarray*}
\end{Lemma}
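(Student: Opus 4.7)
The plan is to use duality on $L^p(v_{\vec{w}})$. Write $u:=v_{\vec{w}}$. Testing against a nonnegative $g$ with $\|g\|_{L^{p'}(u)}\le 1$ and interchanging summation with integration reduces the norm to bounding
\[
\sum_{Q\in\mathcal{Q}}\frac{\sigma_1(Q)\sigma_2(Q)}{|Q|^2}\,u(Q)\,\langle g\rangle_Q^u,
\]
where $\langle g\rangle_Q^u:=u(Q)^{-1}\int_Q g\,u\,dx$.

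The next step extracts the $A_{\vec{P}}$ constant. Since $1/p+1/p_1'+1/p_2'=2$, the definition of $[\vec{w}]_{A_{\vec{P}}}$ rearranges into the pointwise (in $Q$) inequality
\[
\frac{u(Q)^{1/p}\sigma_1(Q)^{1/p_1'}\sigma_2(Q)^{1/p_2'}}{|Q|^{2}}\le [\vec{w}]_{A_{\vec{P}}}^{1/p}.
\]
Pulling this factor out of each summand reduces the trilinear sum to
\[
[\vec{w}]_{A_{\vec{P}}}^{1/p}\sum_{Q}\sigma_1(Q)^{1/p_1}\sigma_2(Q)^{1/p_2}\,u(Q)^{1/p'}\langle g\rangle_Q^u.
\]
I would then apply H\"older's inequality on sequences with the three exponents $(p_1,p_2,p')$, whose reciprocals sum to $1$. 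This produces the desired factors $\bigl(\sum_Q \sigma_i(Q)\bigr)^{1/p_i}$ for $i=1,2$, together with a residual factor $\bigl(\sum_Q u(Q)(\langle g\rangle_Q^u)^{p'}\bigr)^{1/p'}$.

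What remains is to bound this residual by $\|g\|_{L^{p'}(u)}\le 1$ up to an absolute constant. Because $\vec{w}\in A_{\vec{P}}$ implies $u=v_{\vec{w}}\in A_\infty$, Lemma~\ref{lm:l0} yields $\sum_{Q'\subset R,\,Q'\in\mathcal{Q}}u(Q')\lesssim u(R)$ for every $R\in\mathscr{D}$, so $\{u(Q)\}_{Q\in\mathcal{Q}}$ is a Carleson sequence with respect to the measure $u$. The residual bound is then a standard dyadic Carleson embedding, which in this setting is most efficiently established by introducing the principal cubes of Definition~\ref{def:d1} for $g$ relative to $u$, exploiting the geometric $u$-decrease of averages along $\mathcal{G}$, and invoking the $L^{p'}(u)$-boundedness of the weighted dyadic maximal operator $M_u^d$.

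The main technical hurdle is this last Carleson embedding step: the passage from Lebesgue sparseness to a $u$-Carleson property via Lemma~\ref{lm:l0} naturally brings in the constant $[u]_{A_\infty}=[v_{\vec{w}}]_{A_\infty}$, and tracking this constant so that it is absorbed into the implicit constant of the lemma's $\lesssim$ is the delicate point around which the proof must be organized.
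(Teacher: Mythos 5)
Your overall plan---duality, extracting $[\vec w]_{A_{\vec P}}^{1/p}$ from the local $A_{\vec P}$ quantity, then a three-way H\"older split $(p_1,p_2,p')$---is clean up to the last step, but the last step is exactly where the proof breaks. The residual factor you isolate is
\[
  \bigg(\sum_{Q\in\mathcal Q} v_{\vec w}(Q)\,\big(\mathbb{E}_Q^{v_{\vec w}}g\big)^{p'}\bigg)^{1/p'},
\]
and the only Carleson-type input you have at your disposal is Lemma~\ref{lm:l0}, which says $\sum_{Q\subset R}v_{\vec w}(Q)\le 2\,[v_{\vec w}]_{A_\infty}\,v_{\vec w}(R)$. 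The weighted Carleson embedding (or the principal-cubes argument you sketch, plus the $L^{p'}(v_{\vec w})$ bound for $M_{v_{\vec w}}^{d}$) therefore yields this residual with a factor $[v_{\vec w}]_{A_\infty}^{1/p'}$, not an absolute constant: $\mathcal Q$ is only Lebesgue-sparse, and passing from Lebesgue sparseness to $v_{\vec w}$-Carleson costs $[v_{\vec w}]_{A_\infty}$ in an essential way. This is a weight-dependent quantity and cannot be ``absorbed into the implicit constant of $\lesssim$'': it is an honest extra factor. Your route proves
\[
  \|A_{\mathscr D,\mathcal Q}(\sigma_1 1_S,\sigma_2 1_S)\|_{L^p(v_{\vec w})}\lesssim [\vec w]_{A_{\vec P}}^{1/p}\,[v_{\vec w}]_{A_\infty}^{1/p'}\bigg(\sum_{Q}\sigma_1(Q)\bigg)^{1/p_1}\bigg(\sum_{Q}\sigma_2(Q)\bigg)^{1/p_2},
\]
which is strictly weaker than the lemma. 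Since Lemma~\ref{lm:l1} is later applied (via Lemma~\ref{lm:w}) with $(v_{\vec w}^{1-p'},w_2)$ in place of $(w_1,w_2)$, the spurious $A_\infty$ factor you introduce transforms into additional $[\sigma_i]_{A_\infty}$ factors inside the estimates for $J_1,J_2$ and $I_1,\dots,I_4$, and the resulting exponents in Theorem~\ref{thm:main} would no longer match the sharp ones.

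The paper's proof avoids this loss by a different mechanism: it first layers $\mathcal Q$ into $\mathcal Q_a$ according to the local $A_{\vec P}$ quantity, then runs a $(\sigma_1,\sigma_2)$-corona decomposition within each layer, and proves the exponential distributional bounds (\ref{eq:e5}) and (\ref{eq:e6}) for $\mathcal A_{\mathscr D,\mathcal Q_a(L)}$ using only Lebesgue sparseness, the corona property, and the definition of $\mathcal Q_a$ (which converts $v_{\vec w}(R_j^b)$ into $2^{ap}$ times powers of $\sigma_i(R_j^b)$ and $|R_j^b|$). No $[v_{\vec w}]_{A_\infty}$ is invoked anywhere in that argument. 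To repair your approach you would have to establish the $v_{\vec w}$-Carleson property with an absolute constant, which is false in general for a merely Lebesgue-sparse family; there is no way to remove $[v_{\vec w}]_{A_\infty}$ within the duality-plus-Carleson framework.
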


\begin{proof}
Without loss of generality, assume that $p_1\le p_2$.  Set
\[
   \mathcal{Q}_a:=\bigg\{Q\in\mathcal{Q}: 2^{a}<\bigg(\frac{v_{\vec{w}}(Q)}{|Q|}\bigg)^{1/p}
  \prod_{i=1}^2\bigg(\frac{\sigma_i(Q)}{|Q|}\bigg)^{1/{p_i'}}\le 2^{a+1}\bigg\},
\]
where $-1\le a\le \lfloor\log_2 [\vec{w}]_{A_{\vec{P}}}^{1/p}\rfloor$.
Form the ($\sigma_1$, $\sigma_2$)-corona decomposition of $\mathcal{Q}_a$.
We get $\mathcal{L}_a$. Define
\[
  A^{}_{\mathscr{D},\mathcal{Q}_a(L)}(x)=\sum_{Q\in\mathcal{Q}_a(L)}\frac{\sigma_1(Q)\sigma_2(Q)}{|Q|^2}\chi^{}_{Q}(x).
\]
We conclude that there exists some $c>0$ such that for $L\in\mathcal{L}_a$ and $t\ge0$, we have
\begin{eqnarray}
&& \left|\bigg\{ x\in \bbR^n: \mathcal{A}_{\mathscr{D},\mathcal{Q}_a(L)}(x)
   > 4t\frac{\sigma_1(L)\sigma_2(L)}{|L|^2}\bigg\}\right|\le 2^{-t+2}|L|,\label{eq:e5}\\
&& v_{\vec{w}}\left(\bigg\{ x\in \bbR^n: \mathcal{A}_{\mathscr{D},\mathcal{Q}_a(L)}(x)
   > t\frac{\sigma_1(L)\sigma_2(L)}{|L|^2}\bigg\}\right)\lesssim 2^{-ct}  v_{\vec{w}}(L).\label{eq:e6}
\end{eqnarray}

First, we prove $(\ref{eq:e5})$. It is obvious that
\[
  \left|\bigg\{ x\in \bbR^n: \mathcal{A}_{\mathscr{D},\mathcal{Q}_a(L)}(x)
   > 4\frac{\sigma_1(L)\sigma_2(L)}{|L|^2}\bigg\}\right|\le |L|.
\]
Since for $Q\in\mathcal{Q}_a(L)$,
\[
  \frac{\sigma_1(Q)\sigma_2(Q)}{|Q|^2}\le 4\frac{\sigma_1(L)\sigma_2(L)}{|L|^2},
\]
by the sparse property of $\mathcal{Q}$, we have for any integer $\tau$,
\[
  \left|\bigg\{ x\in \bbR^n: \mathcal{A}_{\mathscr{D},\mathcal{Q}_a(L)}(x)
   > 4\tau\frac{\sigma_1(L)\sigma_2(L)}{|L|^2}\bigg\}\right|\le 2^{-\tau+1}|L|.
\]
This proves $(\ref{eq:e5})$.

Next we prove $(\ref{eq:e6})$. For integers $b\ge 0$, we define
$\mathcal{Q}_{a,b}(L)$ to be the set consisting of  $Q\in\mathcal{Q}_a(L)$ such that
\[
2^{-b+1}\frac{\sigma_1(L)\sigma_2(L)}{|L|^2}< \frac{\sigma_1(Q)\sigma_2(Q)}{|Q|^2}\le 2^{-b+2}\frac{\sigma_1(L)\sigma_2(L)}{|L|^2}.
\]
Define
\[
  E_b(t):=\bigg\{ x\in \bbR^n: \mathcal{A}_{\mathscr{D},
  \mathcal{Q}_{a,b}(L)}(x)>4t2^{-b}\frac{\sigma_1(L)\sigma_2(L)}{|L|^2}\bigg\},
\]
where
\[
  \mathcal{A}_{\mathscr{D},\mathcal{Q}_{a,b}(L)}(x)
  =\sum_{Q\in \mathcal{Q}_{a,b}(L)}
    \frac{\sigma_1(Q)\sigma_2(Q)}{|Q|^2}\chi^{}_{Q}(x).
\]
Similar arguments as the above show that $|E_b(t)|\le  2^{-t+2}|L|$. Let $K=4\sum_{b\ge 0} 2^{-b/2}$.
We have
\begin{eqnarray*}
&& v_{\vec{w}}\left(\bigg\{ x\in \bbR^n: \mathcal{A}_{\mathscr{D},\mathcal{Q}_a(L)}(x)>t\frac{\sigma_1(L)\sigma_2(L)}{|L|^2}\bigg\}\right)\\
&
  \le&
  \sum_{b\ge 0} v_{\vec{w}}\left(\bigg\{ x\in \bbR^n: \mathcal{A}_{\mathscr{D},\mathcal{Q}_{a,b}(L)}(x)>4t2^{-b/2}K^{-1}\frac{\sigma_1(L)\sigma_2(L)}{|L|^2}\bigg\}\right)\\
  &=&\sum_{b\ge 0} v_{\vec{w}}(E_b(2^{b/2}K^{-1} t)).
\end{eqnarray*}
Suppose that $E_b(2^{b/2}K^{-1}t)=\bigcup_j R_j^b$, where $R_j^b$ are pairwise
disjoint maximal dyadic cubes in $E_b(2^{b/2}K^{-1}t)$. Notice that $R_j^b\in\mathcal{Q}_{a,b}(L)$. We have
\begin{eqnarray*}
v_{\vec{w}}(E_b(2^{b/2}K^{-1}t))&=&\sum_j v_{\vec{w}}(R_j^b)\\
&\lesssim& \sum_j \bigg(2^a \frac{|R_j^b|^2}{\sigma_1(R_j^b)^{1/{p_1'}}\sigma_2(R_j^b)^{1/{p_2'}}} \bigg)^p\\
&=& \sum_j \frac{2^{ap}|R_j^b|^{2p/{p_2'}}}{\sigma_1(R_j^b)^{p/{p_2'}}\sigma_2(R_j^b)^{p/{p_2'}}}\sigma_1(R_j^b)^{p/{p_1}-p/{p_2}}|R_j^b|^{2p/{p_2}}\\
&\lesssim&
  \sum_j \frac{2^{ap}2^{bp/{p_2'}}
  |L|^{2p/{p_2'}}}{\sigma_1(L)^{p/{p_2'}}\sigma_2(L)^{p/{p_2'}}}
    \sigma_1(R_j^b)^{p/{p_1}-p/{p_2}}|R_j^b|^{2p/{p_2}}\\
&\le & 2^{bp/{p_2'}}\sum_j \bigg(\frac{\sigma_1(R_j^b)}{\sigma_1(L)}\bigg)^{p/{p_1}-p/{p_2}}\cdot\bigg(\frac{|R_j^b|}{|L|}\bigg)^{2p/{p_2}}v_{\vec{w}}(L)\\
&&\hskip 40mm \mbox{(By the definition of $\mathcal{Q}_a$)}\\
&\le& 2^{bp/{p_2'}}\bigg(\sum_j\frac{\sigma_1(R_j^b)}{\sigma_1(L)}\bigg)^{p/{p_1}-p/{p_2}}
  \!\cdot\!\bigg(\sum_j\frac{|R_j^b|}{|L|}\bigg)^{2p/{p_2}}
   \!\! v_{\vec{w}}(L)\\
&\le& 2^{bp/{p_2'}} 2^{(-K^{-1}2^{b/2}t+2)\frac{2p}{p_2}} v_{\vec{w}}(L).
\end{eqnarray*}
It follows that for $t\ge 1$,
\begin{eqnarray*}
&&v_{\vec{w}}\left(\bigg\{ x\in \bbR^n: \mathcal{A}_{\mathscr{D},\mathcal{Q}_a(L)}(x)>t\frac{\sigma_1(L)\sigma_2(L)}{|L|^2}\bigg\}\right)\\
&\le& \sum_{b\ge 0} v_{\vec{w}}(E_b(2^{b/2}K^{-1}t))\\
&\lesssim& \sum_{b\ge 0} 2^{bp/{p_2'}} 2^{(-K^{-1}2^{b/2}t+2)\frac{2p}{p_2}} v_{\vec{w}}(L)\\
&\le& \sum_{b\ge 0} 2^{bp/{p_2'}} 2^{(-K^{-1}2^{b/2}(1+t)/2+2)\frac{2p}{p_2}} v_{\vec{w}}(L)\\
&\lesssim& 2^{-ct}  v_{\vec{w}}(L),
\end{eqnarray*}
where $c=K^{-1}p/{p_2}$.
For $0\le t<1$, it is obvious that $(\ref{eq:e6})$ is correct.

For $L\in\mathcal{L}_a$ and $d\in\bbZ_+$, let
\[
  L_{a,d}=\Big\{x\in\bbR^n: \mathcal{A}_{\mathscr{D},\mathcal{Q}_a(L)}(x)\in (d,d+1]\frac{\sigma_1(L)\sigma_2(L)}{|L|^2}\Big\}.
\]
It is obvious that $L_{a,d}\subset L$ and by (\ref{eq:e6}),
\[
  v_{\vec{w}}(L_{a,d})\lesssim 2^{-cd}v_{\vec{w}}(L).
\]
By the definition of ($\sigma_1$, $\sigma_2$)-corona decomposition, we have
\[
  \sum_{L\in\mathcal{L}_a}\frac{\sigma_1(L)\sigma_2(L)}{|L|^2}\chi^{}_{L_{a,d}}(x)\asymp
  \bigg(\sum_{L\in\mathcal{L}_a}\bigg(\frac{\sigma_1(L)\sigma_2(L)}{|L|^2}\bigg)^p \chi^{}_{L_{a,d}}(x)\bigg)^{1/p}.
\]
It follows that
\begin{eqnarray*}
&&\left\|A_{\mathscr{D},\mathcal{Q}}(\sigma_1 1_S, \sigma_2 1_S)\right\|_{L^p(v_{\vec{w}})}\\
&\le& \sum_{a=-1}^{\lfloor\log_2 [\vec{w}]_{A_{\vec{P}}}^{1/p}\rfloor}\left\|A_{\mathscr{D},\mathcal{Q}_a}(x)\right\|_{L^p(v_{\vec{w}})}\\
&\le& \sum_{a=-1}^{\lfloor\log_2 [\vec{w}]_{A_{\vec{P}}}^{1/p}\rfloor}\sum_{d= 0}^\infty(d+1)\left\|\sum_{L\in\mathcal{L}_a}\frac{\sigma_1(L)\sigma_2(L)}{|L|^2}\chi^{}_{L_{a,d}}(x)\right\|_{L^p(v_{\vec{w}})}\\
&\lesssim& \sum_{a=-1}^{\lfloor\log_2 [\vec{w}]_{A_{\vec{P}}}^{1/p}\rfloor}\sum_{d= 0}^\infty\frac{d+1}{2^{cd/p}}
\left(\sum_{L\in\mathcal{L}_a}\bigg(\frac{\sigma_1(L)\sigma_2(L)}{|L|^2}\bigg)^p v_{\vec{w}}(L)\right)^{1/p}\\
&\lesssim& \sum_{a=-1}^{\lfloor\log_2 [\vec{w}]_{A_{\vec{P}}}^{1/p}\rfloor} 2^a
\left(\sum_{L\in\mathcal{L}_a} \sigma_1(L)^{p/{p_1}}\sigma_2(L)^{p/{p_2}}\right)^{1/p}\\
&\lesssim &[\vec{w}]_{A_{\vec{P}}}^{1/p}
 \bigg(\sum_{Q\in\mathcal{Q}}\sigma_1(Q)\bigg)^{1/{p_1}}\cdot\bigg(\sum_{Q\in\mathcal{Q}}\sigma_2(Q)\bigg)^{1/{p_2}}.
\end{eqnarray*}
This completes the proof.
\end{proof}

The following result  gives another special case of
$\|A_{\mathscr{D},\mathcal{S}}(|\!\vec{f}|)\|_{L^p(v_{\vec{w}})}$.
Since its proof shares some common steps with the one for Theorem~\ref{thm:main},
we postpone the proof to Section~\ref{sec:s3}.

\begin{Lemma}\label{lm:l2}
Suppose that $(w_1,w_2)\in A_{\vec{P}}$ with $\vec{P}=(p_1,p_2)$ satisfies that $1/p=1/{p_1}+1/{p_2}$ and that $1<p,p_1,p_2<\infty$. Let $S$ be a dyadic cube
and $\supp f_1\subset S$. Then
\begin{eqnarray*}
\left\|1_S A_{\mathscr{D},\mathcal{S}}(|f_1|\sigma_1 , \sigma_2 1_S)\right\|_{L^p(v_{\vec{w}})}&\lesssim &[\vec{w}]_{A_{\vec{P}}}^{1/p}[\sigma_2]_{A_\infty}^{1/{p_2}}
  ([v_{\vec{w}}]_{A_\infty}^{1/{p'}}+[\sigma_1]_{A_\infty}^{1/{p_1}})\\
  &&\quad\cdot \|f_1\|_{L^{p_1}(\sigma_1)}\sigma_2(S)^{1/{p_2}}.
\end{eqnarray*}
\end{Lemma}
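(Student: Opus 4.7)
The plan is to reduce to Lemma~\ref{lm:l1} via a principal-cube decomposition of $|f_1|$ against $\sigma_1$, then reassemble the pieces by duality and Hölder. First I restrict the sparse family $\mathcal{S}$ to cubes inside $S$. Any $Q\in\mathcal{S}$ with $Q\supsetneq S$ contributes to $1_S A_{\mathscr{D},\mathcal{S}}(|f_1|\sigma_1,\sigma_2 1_S)$ only through $1_S$, and both averages degenerate to $|S|/|Q|$ times quantities depending solely on $S$. Since the sidelengths of such cubes grow geometrically, the sum over these outer cubes is dominated by a single-cube term, and a direct use of the $A_{\vec{P}}$ inequality for $S$ shows this term is already bounded by the asserted right-hand side (in fact with no $A_\infty$ factor). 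Consequently, we may assume every $Q\in\mathcal{S}$ lies in $S$, with the truncated family still sparse by Lemma~\ref{lm:sparse}.

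Next, following Definition~\ref{def:d1}, I form the principal cubes $\mathcal{G}$ for $|f_1|$ relative to $\sigma_1\,dx$, taking $\mathcal{G}_0=\{S\}$, and partition $\mathcal{S}=\bigsqcup_{G\in\mathcal{G}}\mathcal{S}(G)$ according to $\Gamma(Q)$. Each $\mathcal{S}(G)$ is a sparse subfamily of $\mathcal{S}$ contained in $G$ (Lemma~\ref{lm:sparse}), and the principal-cube property $\mathbb{E}_Q^{\sigma_1}|f_1|\le 4\mathbb{E}_G^{\sigma_1}|f_1|$ yields the pointwise estimate
\[
 1_S\, A_{\mathscr{D},\mathcal{S}}(|f_1|\sigma_1,\sigma_2 1_S)\le 4\sum_{G\in\mathcal{G}}\mathbb{E}_G^{\sigma_1}|f_1|\cdot A_{\mathscr{D},\mathcal{S}(G)}(\sigma_1 1_G,\sigma_2 1_G).
\]
Lemma~\ref{lm:l1} applied inside $G$, together with Lemma~\ref{lm:l0} to absorb $\sum_{Q\in\mathcal{S}(G)}\sigma_i(Q)\le 2[\sigma_i]_{A_\infty}\sigma_i(G)$, controls each summand in $L^p(v_{\vec{w}})$ by $[\vec{w}]_{A_{\vec{P}}}^{1/p}[\sigma_1]_{A_\infty}^{1/p_1}[\sigma_2]_{A_\infty}^{1/p_2}\sigma_1(G)^{1/p_1}\sigma_2(G)^{1/p_2}$.

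Third, I dualize $\|\sum_{G}\mathbb{E}_G^{\sigma_1}|f_1|\cdot A_{\mathscr{D},\mathcal{S}(G)}(\sigma_1 1_G,\sigma_2 1_G)\|_{L^p(v_{\vec{w}})}$ against a non-negative $g$ with $\|g\|_{L^{p'}(v_{\vec{w}})}=1$, estimate the pairing term by term in $G$ by Hölder on $L^p\times L^{p'}$, and then apply a three-exponent Hölder in the $G$-sum with exponents $p_1,p_2,p'$ (which satisfy $1/p_1+1/p_2+1/p'=1$). The $f_1$-factor $\bigl(\sum_G(\mathbb{E}_G^{\sigma_1}|f_1|)^{p_1}\sigma_1(G)\bigr)^{1/p_1}$ collapses to $\|f_1\|_{L^{p_1}(\sigma_1)}$ by the Carleson embedding on the principal tree: the disjoint stopping sets of $\mathcal{G}$ carry at least $\tfrac{3}{4}$ of the $\sigma_1$-mass of their principal cube, so $M^{\sigma_1}$ is bounded on $L^{p_1}(\sigma_1)$. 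The $\sigma_2$- and $g$-factors are handled via Lemma~\ref{lm:l0}, applied on the original sparse family $\mathcal{S}$ rather than on $\mathcal{G}$, producing $[\sigma_2]_{A_\infty}^{1/p_2}\sigma_2(S)^{1/p_2}$ and a term of order one, respectively. This delivers the $[\sigma_1]_{A_\infty}^{1/p_1}$-summand.

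The companion $[v_{\vec{w}}]_{A_\infty}^{1/p'}$-summand comes from running the same skeleton with principal cubes built for the dual function $g$ against $v_{\vec{w}}$ in place of those for $|f_1|$ against $\sigma_1$; adding the two bounds produces the claimed expression. The main obstacle is the three-variable Hölder/Carleson step, since a principal tree is sparse only with respect to the measure it is built on; the passage from $\mathcal{G}$-sums back to sums indexed by the genuinely (Lebesgue-)sparse family $\mathcal{S}$ requires careful bookkeeping and is where the $A_\infty$ constants appearing in the statement are pinned down.
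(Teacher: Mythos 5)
Your outer-cubes reduction and the pointwise bound
\[
 1_S\, A_{\mathscr{D},\mathcal{S}}(|f_1|\sigma_1,\sigma_2 1_S)\le 4\sum_{G\in\mathcal{G}}\mathbb{E}_G^{\sigma_1}|f_1|\cdot A_{\mathscr{D},\mathcal{S}(G)}(\sigma_1 1_G,\sigma_2 1_G)
\]
are correct, as is the observation that $\sum_{G}(\mathbb{E}_G^{\sigma_1}|f_1|)^{p_1}\sigma_1(G)\lesssim \|f_1\|_{L^{p_1}(\sigma_1)}^{p_1}$ by the Carleson embedding on the principal tree. But the duality step breaks down exactly where you flag it, and you do not supply a repair. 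After pairing with $g$ and applying the three-exponent H\"older in the index $G$, you are left with factors of the form $\bigl(\sum_{G\in\mathcal{G}}\sigma_2(G)\bigr)^{1/p_2}$ (if Lemma~\ref{lm:l0} is used per $G$) or, in any variant, $\bigl(\sum_{G\in\mathcal{G}}\int_G g^{p'}v_{\vec{w}}\bigr)^{1/p'}$. These sums run over the principal tree $\mathcal{G}$, which by construction is Carleson \emph{only} with respect to $\sigma_1$; there is no sparsity with respect to $\sigma_2$, $v_{\vec{w}}$, or Lebesgue. A standard example — take $\sigma_1$ to be Lebesgue, $|f_1|$ a dyadic spike at the origin so that $\mathcal{G}=\{[0,2^{-k})^n\}_{k\ge0}$, and $v_{\vec{w}}$ concentrated near the origin — makes $\sum_G\int_G g^{p'}v_{\vec{w}}$ diverge while $\|g\|_{L^{p'}(v_{\vec{w}})}=1$. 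Invoking Lemma~\ref{lm:l0} ``on the original sparse family $\mathcal{S}$ rather than on $\mathcal{G}$'' does not apply here: after H\"older the $g$-integral is indexed by $G\in\mathcal{G}$, not by cubes of $\mathcal{S}$, so there is no sparse family to hand Lemma~\ref{lm:l0}. Likewise, ``adding the two bounds'' from a second pass with $g$-principal cubes does not close the argument — each pass leaves an uncontrolled sum of the other flavor.

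The paper circumvents exactly this by \emph{not} dualizing. It uses the level-set decomposition $\Omega_l=\{A_{\mathscr{D},\mathcal{S}}>2^l\}$, the maximal cubes $\mathcal{Q}_l$, and the pairwise disjoint sets $E_l(Q)=Q\cap\Omega_{l+1}\setminus\Omega_{l+2}$. The key structural move is the inequality $v_{\vec w}(E_l(Q))^{1-p}\bigl(\int_{E_l(Q)}\cdot\,v_{\vec w}\bigr)^p\le\int_{E_l(Q)}(\cdot)^p\,v_{\vec w}$; since the $E_l(Q)$ are disjoint, summing in $l,Q$ then telescopes to an integral over $S$, and the $v_{\vec w}$-dependence never accumulates over the principal tree. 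The principal cubes are only used to split the inner sums into a ``small-average'' part ($\mathbb{E}_R^{\sigma_1}f_1\le 16\,\mathbb{E}_{\Gamma(Q)}^{\sigma_1}f_1$), handled by pulling out $\mathbb{E}_G^{\sigma_1}f_1$ and using (\ref{eq:e14}), and a ``large-average'' part, handled by the Carleson inequality (\ref{eq:e9}); in both cases the $\sigma_2$- and $v_{\vec w}$-masses remain indexed by genuinely Lebesgue-sparse cubes so that Lemma~\ref{lm:l0} applies. Your proposal would need an analogue of this disjointification (or a two-parameter stopping-time scheme running $f_1$- and $g$-stopping cubes simultaneously with a split into small/large regimes) before the three-variable H\"older step can be made to close.
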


To prove the main result, we also need the following result on multiple weights.
\begin{Lemma}\cite[Lemma 2.2]{LS}\label{lm:w}
Suppose that $\vec{w}=(w_1,\cdots,w_m)\in A_{\vec{P}}$ and that $1<p$, $p_1$, $\cdots$, $p_m<\infty$ with $1/{p_1}+\cdots+1/{p_m}=1/p$.
Then $\vec{w}^i:=(w_1$, $\cdots$, $w_{i-1}$, $v_{\vec{w}}^{1-p'}$, $w_{i+1}$, $\cdots$, $w_m)\in A_{\vec{P}^i}$ with $\vec{P}^i=(p_1$, $\cdots$, $p_{i-1}$, $p'$,
$p_{i+1}$, $\cdots$, $p_m)$ and
\[
 [\vec{w}^i]_{A_{\vec{P}^i}}=[\vec{w}]_{A_{\vec{P}}}^{p_i'/p}.
\]
\end{Lemma}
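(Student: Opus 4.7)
The statement is a purely algebraic identity for the multiple $A_{\vec P}$ characteristic under a duality swap, so the plan is to verify both conclusions by direct computation of exponents, with no analytic input required. The whole argument reduces to bookkeeping the definitions of $v_{\vec w}$ and the factors $(\cdot)^{p/p_i'}$ in $[\vec w]_{A_{\vec P}}$.

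First I would check that $\vec P^i$ is admissible, i.e.\ that $\sum_{j} 1/p^i_j = 1/p_i'$. Writing the sum, the $j\neq i$ terms give $1/p-1/p_i$ and the $i$-th term contributes $1/p' = 1-1/p$, so the total is $1-1/p_i = 1/p_i'$, as needed. Next I would compute $v_{\vec w^i}$. Using $1-p' = -1/(p-1)$, one obtains $(v_{\vec w}^{1-p'})^{p_i'/p'} = v_{\vec w}^{-p_i'/p}$, and since $v_{\vec w}^{p_i'/p} = \prod_{k} w_k^{p_i'/p_k}$, the product
\[
v_{\vec w^i} = \Bigl(\prod_{j\neq i} w_j^{p_i'/p_j}\Bigr)\cdot v_{\vec w}^{-p_i'/p}
\]
collapses to $w_i^{-p_i'/p_i} = w_i^{1-p_i'}$, because $1-p_i' = -p_i'/p_i$. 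This identification of $v_{\vec w^i}$ with $w_i^{1-p_i'}$ is the pivotal bit.

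Then I would expand $[\vec w^i]_{A_{\vec P^i}}$ using its definition. For $j\neq i$ the inner factor is exactly $\bigl(\tfrac{1}{|Q|}\!\int_Q w_j^{1-p_j'}\bigr)^{p_i'/p_j'}$. For $j=i$ the inner weight is $v_{\vec w}^{1-p'}$ and its $(1-p'')$-th power with $p''=p$ gives $v_{\vec w}$ via the identity $(1-p')(1-p)=1$; the accompanying exponent is $p_i'/p'' = p_i'/p$. Substituting the $v_{\vec w^i} = w_i^{1-p_i'}$ computed above, I would arrive at
\[
[\vec w^i]_{A_{\vec P^i}} = \sup_Q \Bigl(\tfrac{1}{|Q|}\!\int_Q w_i^{1-p_i'}\Bigr)\Bigl(\tfrac{1}{|Q|}\!\int_Q v_{\vec w}\Bigr)^{p_i'/p}\prod_{j\neq i}\Bigl(\tfrac{1}{|Q|}\!\int_Q w_j^{1-p_j'}\Bigr)^{p_i'/p_j'}.
\]

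Finally I would compare with $[\vec w]_{A_{\vec P}}^{p_i'/p}$. Raising the original $A_{\vec P}$ expression to the power $p_i'/p$ turns the exponent $p/p_j'$ on each $\bigl(\tfrac{1}{|Q|}\int_Q w_j^{1-p_j'}\bigr)$ into $p_i'/p_j'$, turns the exponent on $\tfrac{1}{|Q|}\int_Q v_{\vec w}$ into $p_i'/p$, and, for $j=i$, yields exponent $p_i'/p_i' = 1$ on $\bigl(\tfrac{1}{|Q|}\int_Q w_i^{1-p_i'}\bigr)$. This is term-by-term the previous display, so the two characteristics agree, completing the proof. The only real obstacle is exponent confusion, which I would mitigate by systematically using $1-p' = -1/(p-1)$, $p'(p-1)=p$, and $(1-p')(1-p)=1$.
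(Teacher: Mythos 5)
Your computation is correct and complete. The paper itself gives no proof of this lemma (it is quoted from \cite[Lemma 2.2]{LS}), so there is nothing to compare against internally, but your direct algebraic verification is exactly the standard argument one would expect. All the key identities check out: the admissibility count $\sum_{j\ne i}1/p_j+1/p'=1/p_i'$ correctly identifies $p_i'$ as the target exponent for $\vec P^i$; the collapse $v_{\vec w^i}=w_i^{1-p_i'}$ follows from $(1-p')/p'=-1/p$ and $p_i'/p_i=p_i'-1$; and the dual-weight factor for the $i$-th slot reduces to $v_{\vec w}$ via $(1-p')(1-p)=1$ with accompanying exponent $p_i'/p$. The resulting expression for $[\vec w^i]_{A_{\vec P^i}}$ matches $[\vec w]_{A_{\vec P}}^{p_i'/p}$ cube by cube, and since $p_i'/p>0$ the supremum commutes with the power, so both the membership $\vec w^i\in A_{\vec P^i}$ and the exact equality of characteristics follow. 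No gaps.
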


\section{Proof of Theorem~\ref{thm:main}}\label{sec:s3}
In this section, we prove Theorem~\ref{thm:main}.
Without loss of generality, we assume that $f_1,f_2\ge 0$.
Denote $\Omega_l:=\{x\in\bbR^n: A^{}_{\mathscr{D},\mathcal{S}}(f_1\sigma_1,f_2\sigma_2)(x)>2^l\}$ and let $\mathcal{Q}_l$ denote the
set of maximal dyadic cubes in $\Omega_l$. By the structure of $A_{\mathscr{D},\mathcal{S}}$, any cube in $\mathcal{Q}_l$ must be some cube
$Q_{j,k}\in\mathcal{S}$.

We have
\begin{eqnarray*}
\left\|A^{}_{\mathscr{D},\mathcal{S}}(f_1\sigma_1,f_2\sigma_2)\right\|_{L^p(v_{\vec{w}})}^p&\le&
4^p\sum_{l\in\bbZ}2^{lp}v_{\vec{w}}(\Omega_{l+1}\setminus \Omega_{l+2})\\
&=&4^p\sum_{l\in\bbZ}\sum_{Q\in\mathcal{Q}_l}2^{lp}v_{\vec{w}}(Q\cap\Omega_{l+1}\setminus \Omega_{l+2})\\
&=&4^p\sum_{l\in\bbZ}\sum_{Q\in\mathcal{Q}_l}2^{lp}v_{\vec{w}}(E_l(Q)),
\end{eqnarray*}
where $E_l(Q)=Q\cap\Omega_{l+1}\setminus \Omega_{l+2}$.
By the maximal property of $Q\in\mathcal{Q}_l$, we have
\[
  \sum_{Q_{j,k}\supset Q} \prod_{i=1}^2 \frac{1}{|Q_{j,k}|}\int_{Q_{j,k}}f_i(y_i)\sigma_i dy_i >2^l
\]
and
\[
  \sum_{Q_{j,k}\supsetneq Q} \prod_{i=1}^2 \frac{1}{|Q_{j,k}|}\int_{Q_{j,k}}f_i(y_i)\sigma_i dy_i \le 2^l.
\]
Therefore, for $x\in E_l(Q)$, we have
\[
  2^{l+1}<A^{}_{\mathscr{D},\mathcal{S}}(f_1\sigma_1 , f_2\sigma_2 )(x) \le A^{}_{\mathscr{D},\mathcal{S}}(f_1\sigma_1 1_Q, f_2\sigma_2 1_Q)(x)+2^l.
\]
That is,
\[
  A^{}_{\mathscr{D},\mathcal{S}}(f_1\sigma_1 1_Q, f_2\sigma_2 1_Q)(x)>2^l,\quad\mbox{$x\in E_l(Q)$}.
\]
Thus, for sufficiently small $\beta>0$ to be determined later, we have
\begin{eqnarray*}
&&\left\|A^{}_{\mathscr{D},\mathcal{S}}(f_1\sigma_1,f_2\sigma_2)\right\|_{L^p(v_{\vec{w}})}^p\\&\le&
4^p\sum_{l\in\bbZ, Q\in\mathcal{Q}_l\atop v_{\vec{w}}(E_l(Q))\le \beta v_{\vec{w}}(Q)}2^{lp}\beta v_{\vec{w}}(Q)
+4^p\sum_{l\in\bbZ, Q\in\mathcal{Q}_l\atop v_{\vec{w}}(E_l(Q))> \beta v_{\vec{w}}(Q)}2^{lp}v_{\vec{w}}(E_l(Q))\\
&\le& \frac{4^p\beta}{1-2^{-p}}\left\|A^{}_{\mathscr{D},\mathcal{S}}(f_1\sigma_1,f_2\sigma_2)\right\|_{L^p(v_{\vec{w}})}^p
+4^p\sum_{l\in\bbZ, Q\in\mathcal{Q}_l\atop v_{\vec{w}}(E_l(Q))> \beta v_{\vec{w}}(Q)}v_{\vec{w}}(E_l(Q))^{1-p}\\
&&\quad\cdot\left(\int_{E_l(Q)}A^{}_{\mathscr{D},\mathcal{S}}(f_1\sigma_1 1_Q, f_2\sigma_2 1_Q)(x) v_{\vec{w}}dx \right)^p.
\end{eqnarray*}
Consequently, by setting $\beta =4^{-p}(1-2^{-p})/2$, we get
\begin{eqnarray}
&&\left\|A^{}_{\mathscr{D},\mathcal{S}}(f_1\sigma_1,f_2\sigma_2)\right\|_{L^p(v_{\vec{w}})}^p
 \nonumber\\
&\lesssim&
\sum_{l\in\bbZ, Q\in\mathcal{Q}_l \atop v_{\vec{w}}(E_l(Q))> \beta v_{\vec{w}}(Q)}
   \!\!\!\!\!\!\!\!v_{\vec{w}}(E_l(Q))^{1-p}
    \bigg(\int_{E_l(Q)}A^{}_{\mathscr{D},\mathcal{S}}(f_1\sigma_1 1_Q,
       f_2\sigma_2 1_Q)(x)
      v_{\vec{w}}dx \bigg)^p \nonumber\\
&&\hskip 50mm \label{eq:e7}\\
&\lesssim&\sum_{l\in\bbZ, Q\in\mathcal{Q}_l \atop v_{\vec{w}}(E_l(Q))> \beta
        v_{\vec{w}}(Q)}
        v_{\vec{w}}(E_l(Q))^{1-p}
            \nonumber
     \\
&& \qquad\qquad\qquad\times
   \bigg(\int_{E_l(Q)}A^{}_{\mathscr{D},\mathcal{S}}
      (f_1\sigma_1 1_{Q\setminus \Omega_{l+2}}, f_2\sigma_2 1_{Q\setminus \Omega_{l+2}})(x)
          v_{\vec{w}}dx \bigg)^p \nonumber\\
&& +
  \sum_{l\in\bbZ, Q\in\mathcal{Q}_l\atop v_{\vec{w}}(E_l(Q))> \beta v_{\vec{w}}(Q)}
  v_{\vec{w}}(E_l(Q))^{1-p}
     \nonumber  \\
&&\qquad\qquad\qquad
  \times \bigg(\int_{E_l(Q)}A^{}_{\mathscr{D},\mathcal{S}}(f_1\sigma_1 1_{Q\setminus
     \Omega_{l+2}}, f_2\sigma_2 1_{Q\cap \Omega_{l+2}})(x) v_{\vec{w}}dx \bigg)^p\nonumber
  \\
&&
  +\sum_{l\in\bbZ, Q\in\mathcal{Q}_l\atop v_{\vec{w}}(E_l(Q))> \beta v_{\vec{w}}(Q)}
    v_{\vec{w}}(E_l(Q))^{1-p} \nonumber
      \\
&&\qquad\qquad\qquad\times
\bigg(\int_{E_l(Q)}A^{}_{\mathscr{D},\mathcal{S}}(f_1\sigma_1 1_{Q\cap \Omega_{l+2}},
      f_2\sigma_2 1_{Q\setminus\Omega_{l+2}})(x) v_{\vec{w}}dx \bigg)^p
      \nonumber
  \\
&&+\sum_{l\in\bbZ, Q\in\mathcal{Q}_l\atop v_{\vec{w}}(E_l(Q))> \beta v_{\vec{w}}(Q)}
  v_{\vec{w}}(E_l(Q))^{1-p}
   \nonumber
       \\
&&\qquad\qquad\qquad\times
  \bigg(\int_{E_l(Q)}A^{}_{\mathscr{D},\mathcal{S}}(f_1\sigma_1 1_{Q\cap \Omega_{l+2}},
      f_2\sigma_2 1_{Q\cap \Omega_{l+2}})(x) v_{\vec{w}}dx \bigg)^p
      \nonumber\\
&:=&I_1+I_2+I_3+I_4.\nonumber
\end{eqnarray}

In the rest of this section, we use the following \textbf{conventions}.

\begin{enumerate}
\item all $l$ in these sums are of the same parity; after all, there are just two such sums;
\item keep in mind the restriction
\begin{equation}\label{eq:beta}
  v_{\vec{w}}(E_l(Q))> \beta v_{\vec{w}}(Q)
\end{equation}
appearing in all these sums
      and we omit it in the rest of this section;
\item by the monotone convergence theorem, we may also assume that all
      appearing cubes are contained in some maximal dyadic cube $\overline{Q}$. Then we can use the technique of principal cubes.
\end{enumerate}

Before further estimates, we give two lemmas. The first can be proved with similar
arguments as that in \cite[pp. 20-21]{HLMORSU} and we omit the details.
\begin{Lemma}
Let $\mathcal{G}$ be the principal cubes with respect  to $f_1$ and $\sigma_1$,
and $\tilde{\mathcal{G}}$ be the principal cubes
with respect to $f_2$ and $\sigma_2$.
Suppose that $\Gamma(Q)$ and $\tilde{\Gamma}(Q)$ are defined as that in Definition~\ref{def:d1}. Then
\begin{eqnarray}
&&\sum_{l\in\bbZ}\sum_{Q\in\mathcal{Q}_l}\sum_{R\in \mathcal{Q}_{l+2}, R\subset Q\atop \mathbb{E}_{R}^{\sigma_1}f_1> 16 \mathbb{E}_{\Gamma(Q)}^{\sigma_1}f_1}
 \sigma_1(R)(\mathbb{E}_{R}^{\sigma_1}f_1)^{p_1}\lesssim \|f_1\|_{L^{p_1}(\sigma_1)}^{p_1}. \label{eq:e9}\\
 && \sum_{l\in\bbZ}\sum_{Q\in\mathcal{Q}_l}\sum_{\tilde{R}\in\mathcal{Q}_{l+2},\tilde{R}\subset Q\atop \mathbb{E}_{\tilde{R}}^{\sigma_2}f_2> 16 \mathbb{E}_{\tilde{\Gamma}(Q)}^{\sigma_2}f_2}\sigma_2(\tilde{R})(\mathbb{E}_{\tilde{R}}^{\sigma_2}f_2)^{p_2}\lesssim \|f_2\|_{L^{p_2}(\sigma_2)}^{p_2}.\label{eq:e10}
\end{eqnarray}
\end{Lemma}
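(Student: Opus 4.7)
The strategy has three steps: (i) replace each $(\mathbb{E}_R^{\sigma_1}f_1)^{p_1}$ by a constant multiple of $(\mathbb{E}_{\Gamma(R)}^{\sigma_1}f_1)^{p_1}$ using the principal-cube property; (ii) reindex by $G:=\Gamma(R)\in\mathcal{G}$ and collapse the inner sum to a multiple of $\sigma_1(G)$; (iii) close with a Carleson-type embedding along the $\sigma_1$-sparse tree $\mathcal{G}$. Only (\ref{eq:e9}) is treated; (\ref{eq:e10}) is obtained by swapping indices.

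For step (i), Definition~\ref{def:d1} gives $\mathbb{E}_R^{\sigma_1}f_1\le 4\,\mathbb{E}_{\Gamma(R)}^{\sigma_1}f_1$. Coupled with the summation constraint $\mathbb{E}_R^{\sigma_1}f_1>16\,\mathbb{E}_{\Gamma(Q)}^{\sigma_1}f_1$, this yields $\mathbb{E}_{\Gamma(R)}^{\sigma_1}f_1>4\,\mathbb{E}_{\Gamma(Q)}^{\sigma_1}f_1$, which (since consecutive principal cubes satisfy a strict $4$-ratio jump) forces $\Gamma(R)\subsetneq\Gamma(Q)$ strictly. Writing $G=\Gamma(R)$, the left side of (\ref{eq:e9}) is majorized by
\[
 4^{p_1}\sum_{G\in\mathcal{G}}(\mathbb{E}_G^{\sigma_1}f_1)^{p_1}\sum_{(l,Q,R)\in\mathcal{T}(G)}\sigma_1(R),
\]
where $\mathcal{T}(G)$ collects the admissible triples with $\Gamma(R)=G$, $R\in\mathcal{Q}_{l+2}$, $R\subset Q\in\mathcal{Q}_l$ and $\Gamma(Q)\supsetneq G$.

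For step (ii), I claim the inner sum is $\lesssim\sigma_1(G)$. At each fixed $l$ the cubes of $\mathcal{Q}_{l+2}$ lying in $G$ are pairwise disjoint, so $\sum_R\sigma_1(R)\le\sigma_1(G)$ for the contribution at that level. The potential over-counting in $l$ is absorbed using the standing restriction $v_{\vec{w}}(E_l(Q))>\beta v_{\vec{w}}(Q)$ from (\ref{eq:beta}) together with the pairwise disjointness of the sets $E_l(Q)$, which stratify $\bbR^n$ by the dyadic height of $A^{}_{\mathscr{D},\mathcal{S}}(f_1\sigma_1,f_2\sigma_2)$. This reproduces verbatim the $l$-absorption of \cite[pp.~20--21]{HLMORSU}.

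For step (iii), the principal cubes $\mathcal{G}$ are $\sigma_1$-sparse by construction, i.e. $\sigma_1\bigl(G\setminus\bigcup_{G'\in\mathcal{G},\,G'\subsetneq G}G'\bigr)\gtrsim\sigma_1(G)$. Hence
\[
 \sum_{G\in\mathcal{G}}\sigma_1(G)(\mathbb{E}_G^{\sigma_1}f_1)^{p_1}\lesssim \int(M_d^{\sigma_1}f_1)^{p_1}\,d\sigma_1\lesssim \|f_1\|_{L^{p_1}(\sigma_1)}^{p_1},
\]
where $M_d^{\sigma_1}$ is the dyadic $\sigma_1$-weighted maximal function, bounded on $L^{p_1}(\sigma_1)$. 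The only genuine obstacle is the $l$-absorption in step (ii): a single cube $R$ may belong to $\mathcal{Q}_{l+2}$ for several $l$, and one must verify that this multiplicity is absorbed without destroying the bound $\sigma_1(G)$. This is precisely where the specific structure of the stopping family $\{\mathcal{Q}_l\}$ (as maximal dyadic cubes inside the level sets of $A^{}_{\mathscr{D},\mathcal{S}}$) and the restriction (\ref{eq:beta}) are genuinely needed, and this is the technical content of the argument in \cite{HLMORSU}.
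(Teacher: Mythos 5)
The paper itself gives no proof of this lemma; it simply cites \cite[pp.~20--21]{HLMORSU}. Your three-step blueprint (replace $\mathbb{E}_R^{\sigma_1}f_1$ by $4\mathbb{E}_{\Gamma(R)}^{\sigma_1}f_1$, re-index by $G=\Gamma(R)$, then invoke the $\sigma_1$-Carleson property of $\mathcal{G}$, which is exactly (\ref{eq:e14})) is the right shape of argument, and steps (i) and (iii) are correctly executed. In that sense the proposal is aligned with the paper, which also defers the real work to \cite{HLMORSU}.

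The one place I would push back is the mechanism you put forward for the $l$-absorption in step (ii). You assert that the overcounting in $l$ is controlled by the standing restriction $v_{\vec{w}}(E_l(Q))>\beta v_{\vec{w}}(Q)$ together with the disjointness of $E_l(Q)$. But that restriction lives on the outer pair $(l,Q)$ and is stated in terms of $v_{\vec{w}}$-measure, whereas the quantity you need to control is $\sum\sigma_1(R)$ over the inner cubes $R\in\mathcal{Q}_{l+2}$ — a different weight and a different stopping family. The $\beta$-restriction turns $\{(l,Q)\}$ into a $v_{\vec{w}}$-Carleson family, but a single dyadic cube $R$ can lie in $\mathcal{Q}_{l+2}$ for an arbitrarily long range of $l$ (the whole dyadic interval between $\inf_{\hat R}A^{}_{\mathscr{D},\mathcal{S}}$ and $\inf_{R}A^{}_{\mathscr{D},\mathcal{S}}$), and I do not see how the restriction on $(l,Q)$ and the disjointness of $E_l(Q)$ cap that multiplicity: it would cap it if the restriction were imposed at the level of $(l+2,R)$ (then the disjoint sets $E_{l+2}(R)\subset R$ each with $v_{\vec{w}}(E_{l+2}(R))>\beta v_{\vec{w}}(R)$ force at most $1/\beta$ values of $l$), but the paper's convention (\ref{eq:beta}) only splits on the outer pair $(l,Q)$ when deriving (\ref{eq:e7}), not on the inner stopping cubes $R$. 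So the $l$-absorption — which you yourself flag as the "only genuine obstacle" — is genuinely not dispatched by the argument you sketch, and simply invoking the disjointness of $E_l(Q)$ does not close the gap. You should reconstruct the precise multiplicity argument from \cite{HLMORSU} rather than attributing it to the $\beta$-restriction; the actual mechanism there exploits the principal-cube tree and the maximality of the cubes in $\mathcal{Q}_{l+2}$ more finely, and does not reduce to what you wrote.
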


And the second one can be seen from the definition of principal cubes.

\begin{Lemma}
Let $\mathcal{G}$ be the principal cubes with respect to $f_1$ and $\sigma_1$
and  $\tilde{\mathcal{G}}$ be the principal cubes
with respect to $f_2$ and $\sigma_2$. Then
\begin{eqnarray}
&&\sum_{G\in\mathcal{G}}(\mathbb{E}_G^{\sigma_1}f_1)^{p_1}\sigma_1(G)\lesssim \|f_1\|_{L^{p_1}(\sigma_1)}^{p_1}.\label{eq:e14}\\
&&\sum_{\tilde{G}\in\tilde{\mathcal{G}}}(\mathbb{E}_{\tilde{G}}^{\sigma_2}f_2)^{p_2}\sigma_2(\tilde{G})\lesssim \|f_2\|_{L^{p_2}(\sigma_2)}^{p_2}.\label{eq:e15}
\end{eqnarray}
\end{Lemma}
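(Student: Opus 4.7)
The plan is to run the standard Carleson-embedding-type argument for principal cubes: the family $\mathcal{G}$ is $\sigma_1$-sparse, which combined with the $L^{p_1}(\sigma_1)$-boundedness of the dyadic maximal function yields (\ref{eq:e14}). The estimate (\ref{eq:e15}) then follows by the identical argument with $(f_1,\sigma_1,p_1)$ replaced by $(f_2,\sigma_2,p_2)$, so I would focus on (\ref{eq:e14}).

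First I would verify the sparse bound: for each $G\in\mathcal{G}$, its maximal strict $\mathcal{G}$-descendants $\{G_j\}$ (the members of $\mathcal{G}_{k+1}$ sitting inside $G\in\mathcal{G}_k$) satisfy
$$\sum_j \sigma_1(G_j) \le \frac{1}{4}\sigma_1(G).$$
This is immediate from Definition~\ref{def:d1}: the stopping condition $\mathbb{E}_{G_j}^{\sigma_1}f_1>4\mathbb{E}_G^{\sigma_1}f_1$ and pairwise disjointness of the $G_j$ inside $G$ give
$$4\mathbb{E}_G^{\sigma_1}f_1\sum_j\sigma_1(G_j) < \sum_j\int_{G_j}f_1\,d\sigma_1 \le \int_G f_1\,d\sigma_1 = \mathbb{E}_G^{\sigma_1}f_1\cdot \sigma_1(G).$$
Consequently, setting $E_\mathcal{G}(G):=G\setminus\bigcup_j G_j$, the sets $\{E_\mathcal{G}(G)\}_{G\in\mathcal{G}}$ are pairwise disjoint and satisfy $\sigma_1(E_\mathcal{G}(G))\ge \frac{3}{4}\sigma_1(G)$.

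Next I would use the elementary pointwise bound $\mathbb{E}_G^{\sigma_1}f_1 \le M_{\mathscr{D}}^{\sigma_1}f_1(x)$ for every $x\in G\supset E_\mathcal{G}(G)$, where
$$M_{\mathscr{D}}^{\sigma_1}f_1(x):=\sup_{Q\in\mathscr{D},\,Q\ni x}\mathbb{E}_Q^{\sigma_1}f_1.$$
Combining the disjointness of the $E_\mathcal{G}(G)$ with the $L^{p_1}(\sigma_1)$-boundedness of $M_{\mathscr{D}}^{\sigma_1}$ (standard for $1<p_1<\infty$, with constant depending only on $p_1$), I would conclude
\begin{eqnarray*}
\sum_{G\in\mathcal{G}}(\mathbb{E}_G^{\sigma_1}f_1)^{p_1}\sigma_1(G)
 &\le& \frac{4}{3}\sum_{G\in\mathcal{G}}\int_{E_\mathcal{G}(G)}\bigl(M_{\mathscr{D}}^{\sigma_1}f_1\bigr)^{p_1}d\sigma_1 \\
 &\le& \frac{4}{3}\bigl\|M_{\mathscr{D}}^{\sigma_1}f_1\bigr\|_{L^{p_1}(\sigma_1)}^{p_1} \lesssim \|f_1\|_{L^{p_1}(\sigma_1)}^{p_1}.
\end{eqnarray*}

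There is no serious obstacle here: the substantive content is the one-line sparsity bound $\sigma_1(E_\mathcal{G}(G)) \ge \frac{3}{4}\sigma_1(G)$ extracted from Definition~\ref{def:d1}, and the rest is an immediate appeal to the classical weighted dyadic maximal-function estimate. I expect the implicit constant to depend only on $p_1$ (respectively $p_2$ for the second inequality).
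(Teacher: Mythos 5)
Your proof is correct, and it is precisely the standard argument the paper has in mind when it dismisses this lemma with "can be seen from the definition of principal cubes": the stopping condition $\mathbb{E}_{G'}^{\sigma_1}f_1>4\,\mathbb{E}_{G}^{\sigma_1}f_1$ forces the $\sigma_1$-Carleson (sparse) property $\sum_j\sigma_1(G_j)\le\tfrac14\sigma_1(G)$, and then the universal $L^{p_1}(\sigma_1)$-bound for the dyadic maximal operator $M_{\mathscr{D}}^{\sigma_1}$ finishes the job. The only point worth recording is the degenerate case $\mathbb{E}_G^{\sigma_1}f_1=0$ (no children are selected, so the sparse bound holds vacuously); otherwise the argument, including the claimed dependence of the constant only on $p_1$ (resp. $p_2$), is exactly right.
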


Next we give a proof for Lemma~\ref{lm:l2}.

\begin{proof}[Proof of Lemma~\ref{lm:l2}]
Without loss of generality, assume that $f_1\ge 0$.  Set
\[
  A^{1}_{\mathscr{D},\mathcal{S}}(f_1\sigma_1, 1_S\sigma_2)=\sum_{Q_{j,k}\supsetneq S}\frac{\int_{Q_{j,k}}f_1(y_1)\sigma_1dy_1 \sigma_2(Q_{j,k}\cap S)}{|Q_{j,k}|^2}\chi^{}_{Q_{j,k}}
\]
and
\[
   A^{2}_{\mathscr{D},\mathcal{S}}(f_1\sigma_1, 1_S\sigma_2)=\sum_{Q_{j,k}\subset S}\frac{\int_{Q_{j,k}}f_1(y_1)\sigma_1dy_1 \sigma_2(Q_{j,k}\cap S)}{|Q_{j,k}|^2}\chi^{}_{Q_{j,k}}.
\]
It is easy to see that
\begin{eqnarray*}
1_S A^{1}_{\mathscr{D},\mathcal{S}}(f_1\sigma_1, 1_S\sigma_2)&=& \sum_{Q_{j,k}\supsetneq S}\frac{\int_{Q_{j,k}}f_1(y_1)\sigma_1dy_1 \sigma_2(Q_{j,k}\cap S)}{|Q_{j,k}|^2}\chi^{}_{S}\\
&\lesssim&\frac{\int_{S}f_1(y_1)\sigma_1dy_1 \sigma_2( S)}{|S|^2}\chi^{}_{S}\\
&\le&\frac{\|f_1\|_{L^{p_1}(\sigma_1)}\sigma_1(S)^{1/{p_1'}}\sigma_2(S)}{|S|^2}\chi^{}_{S}.
\end{eqnarray*}
Hence
\begin{eqnarray*}
\left\|1_S A^1_{\mathscr{D},\mathcal{S}}(f_1\sigma_1 , \sigma_2 1_S)\right\|_{L^p(v_{\vec{w}})}
&\lesssim&\frac{\|f_1\|_{L^{p_1}(\sigma_1)}\sigma_1(S)^{1/{p_1'}}\sigma_2(S)}{|S|^2}v_{\vec{w}}(S)^{1/{p}}\\
&\le& [\vec{w}]_{A_{\vec{P}}}^{1/p}\|f_1\|_{L^{p_1}(\sigma_1)}\sigma_2(S)^{1/{p_2}}.
\end{eqnarray*}
It remains to estimate $A^{2}_{\mathscr{D},\mathcal{S}}(|f_1|\sigma_1, 1_S\sigma_2)$. Without loss of generality, assume that
all cubes in $\mathcal{S}$ are contained in $S$.
By the previous arguments,
we only need to estimate (\ref{eq:e7}) in the special case $f_2=1_S$.
We have
\begin{eqnarray*}
&&\left\|A^{}_{\mathscr{D},\mathcal{S}}(f_1\sigma_1,1_S\sigma_2)\right\|_{L^p(v_{\vec{w}})}^p\\
&\lesssim&
\sum_{l\in\bbZ, Q\in\mathcal{Q}_l \atop v_{\vec{w}}(E_l(Q))> \beta v_{\vec{w}}(Q)}
   \!\!\!\!\!\!\!\!v_{\vec{w}}(E_l(Q))^{1-p}
    \bigg(\int_{E_l(Q)}A^{}_{\mathscr{D},\mathcal{S}}(f_1\sigma_1 1_Q,
       1_S\sigma_2 1_Q)(x)
      v_{\vec{w}}dx \bigg)^p\\
&\lesssim&\sum_{l\in\bbZ, Q\in\mathcal{Q}_l \atop v_{\vec{w}}(E_l(Q))> \beta
        v_{\vec{w}}(Q)}
        v_{\vec{w}}(E_l(Q))^{1-p}
     \\
&& \qquad\qquad\qquad\times
   \bigg(\int_{E_l(Q)}A^{}_{\mathscr{D},\mathcal{S}}
      (f_1\sigma_1 1_{Q\setminus \Omega_{l+2}}, 1_S\sigma_2 1_Q)(x)
          v_{\vec{w}}dx \bigg)^p\\
&&\quad+\sum_{l\in\bbZ, Q\in\mathcal{Q}_l \atop v_{\vec{w}}(E_l(Q))> \beta
        v_{\vec{w}}(Q)}
        v_{\vec{w}}(E_l(Q))^{1-p}
     \\
&& \qquad\qquad\qquad\times
   \bigg(\int_{E_l(Q)}A^{}_{\mathscr{D},\mathcal{S}}
      (f_1\sigma_1 1_{Q\cap \Omega_{l+2}}, 1_S\sigma_2 1_Q)(x)
          v_{\vec{w}}dx \bigg)^p\\
&:=& J_1+J_2.
\end{eqnarray*}

In the following, we also use the convention (ii) to omit $v_{\vec{w}}(E_l(Q))> \beta
        v_{\vec{w}}(Q)$.

First, we estimate $J_1$.
Let
\[
\mathcal{S}_l(Q):=  \bigcup_{ R\in \mathcal{Q}_{l+2}}\{Q_{j,k}\in \mathcal{S}: R\subsetneq Q_{j,k}\subset Q\}, \qquad \mbox{ if }  \mathcal{Q}_{l+2}\ne \emptyset   \\
\]
and
\[
\mathcal{S}_l(Q):=  \{Q_{j,k}\in \mathcal{S}:  Q_{j,k}\subset Q\},\qquad \mbox{ if }  \mathcal{Q}_{l+2} = \emptyset.
\]
By Lemma~\ref{lm:sparse}, $\mathcal{S}_l(Q)$ is sparse. For big cubes $Q_{j,k}\supsetneq Q$ and $x\in E_l(Q)$,
\[
  \sum_{Q_{j,k}\supsetneq Q}\frac{\int_{Q\setminus \Omega_{l+2}}f_1\sigma_1dx\cdot\sigma_2(Q)}{|Q_{j,k}|^2}\le
  \frac{\int_{Q\setminus \Omega_{l+2}}f_1\sigma_1dx\cdot\sigma_2(Q)}{|Q|^2}.
\]
Hence for $x\in E_l(Q)$, we have
\begin{equation}\label{eq:e20}
A^{}_{\mathscr{D},\mathcal{S}}(f_1\sigma_1 1_{Q\setminus \Omega_{l+2}}, \sigma_2 1_{Q})(x)
  \le 2A^{}_{\mathscr{D},\mathcal{S}_l(Q)}(f_1\sigma_1 1_{Q\setminus \Omega_{l+2}}, \sigma_2 1_{Q})(x),
\end{equation}
where
\[
  A^{}_{\mathscr{D},\mathcal{S}_l(Q)}(f_1, f_2)(x)=\sum_{Q_{j,k}\in\mathcal{S}_l(Q)}\bigg( \prod_{i=1}^2 \frac{1}{|Q_{j,k}|}\int_{Q_{j,k}}f_i(y_i)dy_i\bigg)\chi^{}_{Q_{j,k}}(x).
\]

Observe that $\mathcal{S}_{l}(Q)$ is an empty set if and only if $Q\in \mathcal{Q}_{l} \cap \mathcal{Q}_{l+2}$.
When the condition is satisfied,  we have
\[
  A^{}_{\mathscr{D},\mathcal{S}}(f_1\sigma_1 1_{Q\setminus \Omega_{l +2}}, \sigma_2 1_{Q})(x)
  =A^{}_{\mathscr{D},\mathcal{S}_{l}(Q)}(f_1\sigma_1 1_{Q\setminus \Omega_{l+2}}, \sigma_2 1_{Q})(x)=0,
\]
which means that (\ref{eq:e20}) is correct even if $\mathcal{S}_{l}(Q)$ is empty.
So  $\{\mathcal{S}_l(Q): l\in 2\bbZ, Q\in\mathcal{Q}_l\}$
and $\{\mathcal{S}_l(Q): l\in 2\bbZ+1, Q\in\mathcal{Q}_l\}$
 are pairwise disjoint, 
 respectively.
Denote $\mathcal{S}_l(Q) =\{Q_{l,\eta}\}_\eta$. We have
\begin{eqnarray*}
&&\int_{E_l(Q)}A^{}_{\mathscr{D},\mathcal{S}}(f_1\sigma_1 1_{Q\setminus \Omega_{l+2}}, 1_S\sigma_2 1_Q)(x) v_{\vec{w}}dx\\
&\lesssim& \int_{E_l(Q)}A^{}_{\mathscr{D},\mathcal{S}_l(Q)}(f_1\sigma_1 1_{Q\setminus \Omega_{l+2}}, 1_S\sigma_2 1_Q)(x) v_{\vec{w}}dx\\
&\le&\bigg(\int_{Q }(A^{}_{\mathscr{D},\mathcal{S}_l(Q)}( v_{\vec{w}} 1_{Q}, \sigma_2 1_Q ))^{p_1'}\sigma_1dx\bigg)^{1/{p_1'}}\cdot\bigg(\int_{Q\setminus \Omega_{l+2}}f_1^{p_1}\sigma_1\bigg)^{1/{p_1}}\\
&\lesssim& [\vec{w}]_{A_{\vec{P}}}^{1/p}
 \bigg(\sum_{Q_{l,\eta}\in\mathcal{S}_l(Q)}v_{\vec{w}}(Q_{l,\eta})\bigg)^{1/{p'}}
  \bigg(\sum_{Q_{l,\eta}\in\mathcal{S}_l(Q)}\sigma_2(Q_{l,\eta})\bigg)^{1/{p_2}} \\
&&\qquad \times
 \bigg(\int_{Q\setminus \Omega_{l+2}}f_1^{p_1}\sigma_1\bigg)^{1/{p_1}}\quad\mbox{(by Lemma~\ref{lm:l1} and Lemma~\ref{lm:w})}\\
 &\le& [\vec{w}]_{A_{\vec{P}}}^{1/p} [v_{\vec{w}}]_{A_\infty}^{1/{p'}}v_{\vec{w}}(Q)^{1/{p'}}\bigg(\sum_{Q_{l,\eta}\in\mathcal{S}_l(Q)}\sigma_2(Q_{l,\eta})\bigg)^{1/{p_2}}
 \bigg(\int_{Q\setminus \Omega_{l+2}}f_1^{p_1}\sigma_1\bigg)^{1/{p_1}},
\end{eqnarray*}
where Lemma~\ref{lm:l0} is used in the last step.
Recall that we have the convention (\ref{eq:beta}).
By H\"{o}lder's inequality, we get
\begin{eqnarray*}
&&\hskip -2em J_1 =\sum_{l\in\bbZ,  Q\in\mathcal{Q}_l}
\bigg(\int_{E_l(Q)}A^{}_{\mathscr{D},\mathcal{S}}(f_1\sigma_1 1_{Q\setminus \Omega_{l+2}}, 1_S\sigma_2 1_Q)(x) v_{\vec{w}}dx \bigg)^p
  v_{\vec{w}}(E_l(Q))^{1-p}\\
&\lesssim&[\vec{w}]_{A_{\vec{P}}} [v_{\vec{w}}]_{A_\infty}^{p/{p'}}
  \bigg(
  \sum_{l\in\bbZ \atop Q\in\mathcal{Q}_l}
  \sum_{Q_{l,\eta}\in\mathcal{S}_l(Q)}
  \sigma_2(Q_{l,\eta})\bigg)^{p/p_2}
 \bigg(\sum_{l\in\bbZ \atop Q\in\mathcal{Q}_l}\int_{Q\setminus \Omega_{l+2}}f_1^{p_1}\sigma_1\bigg)^{p/p_1}\\
&\le&[\vec{w}]_{A_{\vec{P}}} [v_{\vec{w}}]_{A_{\infty}}^{p/{p'}}[\sigma_2]_{A_\infty}^{p/{p_2}}\sigma_2(S)^{p/{p_2}} \|f_1\|_{L^{p_1}(\sigma_1)}^p.
\end{eqnarray*}

Next we estimate $J_2$.
Since $E_l(Q)\subset \Omega_{l+2}^c$  for $R\in\mathcal{Q}_{l+2}$ with $R\subset Q$,
$A^{}_{\mathscr{D},\mathcal{S}}(v_{\vec{w}}1_{E_l(Q)}, \sigma_2 1_{Q})(x)$ is a constant for $x\in R$.
We have
\begin{eqnarray}
&&\int_{E_l(Q)}A^{}_{\mathscr{D},\mathcal{S}}(f_1\sigma_1 1_{Q\cap \Omega_{l+2}}, 1_S\sigma_2 1_{Q})(x) v_{\vec{w}}dx
  \nonumber \\
&=&\sum_{R\in\mathcal{Q}_{l+2}\atop R\subset Q}
\int_{R}A^{}_{\mathscr{D},\mathcal{S}}(v_{\vec{w}}1_{E_l(Q)}, \sigma_2 1_{Q})(x) \sigma_1dx\cdot \mathbb{E}_R^{\sigma_1}f_1\nonumber \\
&\le&16\sum_{R\in\mathcal{Q}_{l+2}, R\subset Q\atop \mathbb{E}_R^{\sigma_1}f_1\le 16 \mathbb{E}_{\Gamma(Q)}^{\sigma_1}f_1}
\int_{R}A^{}_{\mathscr{D},\mathcal{S}}(v_{\vec{w}}1_{E_l(Q)}, \sigma_2 1_{Q})(x) \sigma_1dx\cdot \mathbb{E}_{\Gamma(Q)}^{\sigma_1}f_1\nonumber \\
&&\quad+\sum_{R\in\mathcal{Q}_{l+2}, R\subset Q\atop \mathbb{E}_R^{\sigma_1}f_1> 16 \mathbb{E}_{\Gamma(Q)}^{\sigma_1}f_1}
\int_{R}A^{}_{\mathscr{D},\mathcal{S}}(v_{\vec{w}}1_{E_l(Q)}, \sigma_2 1_{Q})(x) \sigma_1dx\cdot \mathbb{E}_{R}^{\sigma_1}f_1\nonumber \\
&\le& 16
\int_{E_l(Q)}A^{}_{\mathscr{D},\mathcal{S}}(\sigma_1 1_{Q\cap \Omega_{l+2}}, \sigma_2 1_{Q})(x) v_{\vec{w}}dx\cdot \mathbb{E}_{\Gamma(Q)}^{\sigma_1}f_1\nonumber \\
&&\quad+\sum_{R\in\mathcal{Q}_{l+2}, R\subset Q\atop \mathbb{E}_R^{\sigma_1}f_1> 16 \mathbb{E}_{\Gamma(Q)}^{\sigma_1}f_1}
\int_{R}A^{}_{\mathscr{D},\mathcal{S}}(v_{\vec{w}}1_{E_l(Q)}, \sigma_2 1_{Q})(x) \sigma_1dx\cdot \mathbb{E}_{R}^{\sigma_1}f_1.
  \label{eq:J2:1}
\end{eqnarray}
Similarly to (\ref{eq:e20}), for $x\in E_l(Q)$, we have
\[
  A^{}_{\mathscr{D},\mathcal{S}}(f_1\sigma_1 1_{Q\cap \Omega_{l+2}}, \sigma_2 1_{Q})(x)
  \le 2A^{}_{\mathscr{D},\mathcal{S}_l(Q)}(f_1\sigma_1 1_{Q\cap \Omega_{l+2}}, \sigma_2 1_{Q})(x).
\]
Consequently, by setting $ \mathcal S(G)=\bigcup_{l\in\bbZ, Q\in\mathcal{Q}_l\atop \Gamma(Q)=G}\mathcal{S}_l(Q)$,  we have
\begin{eqnarray}
&&\sum_{l\in\bbZ, Q\in\mathcal{Q}_l}\bigg(\int_{E_l(Q)}A^{}_{\mathscr{D},\mathcal{S}}(\sigma_1 1_{Q\cap \Omega_{l+2}}, \sigma_2 1_{Q})(x) v_{\vec{w}}dx\bigg)^p\nonumber \\
&&\quad\cdot (\mathbb{E}_{\Gamma(Q)}^{\sigma_1}f_1)^p\cdot v_{\vec{w}}(E_l(Q))^{1-p}\nonumber \\
&\lesssim& \sum_{l\in\bbZ, Q\in\mathcal{Q}_l}\int_{E_l(Q)}(A^{}_{\mathscr{D},\mathcal{S}_l(Q)}(\sigma_1 1_{Q\cap \Omega_{l+2}}, \sigma_2 1_{Q}))^p v_{\vec{w}}dx \cdot (\mathbb{E}_{\Gamma(Q)}^{\sigma_1}f_1)^p\nonumber \\
&=&\sum_{G\in\mathcal{G}}\sum_{l\in\bbZ, Q\in\mathcal{Q}_l\atop \Gamma(Q)=G}
\int_{E_l(Q)}(A^{}_{\mathscr{D},\mathcal{S}_l(Q)}(\sigma_1 1_{Q\cap \Omega_{l+2}}, \sigma_2 1_{Q}))^p v_{\vec{w}}dx \cdot (\mathbb{E}_{G}^{\sigma_1}f_1)^p\nonumber \\
&\le&\sum_{G\in\mathcal{G}}
\int_{G}(A^{}_{\mathscr{D},\mathcal{S}(G)}(\sigma_1 1_{G}, \sigma_2 1_{G}))^p v_{\vec{w}}dx \cdot (\mathbb{E}_{G}^{\sigma_1}f_1)^p \nonumber \\
&&\hskip 30mm\mbox{(by the disjointness of $E_l(Q)$)}\nonumber \\
&\lesssim& \sum_{G\in\mathcal{G}}[\vec{w}]_{A_{\vec{P}}}\bigg(\sum_{l\in\bbZ, Q\in\mathcal{Q}_l\atop \Gamma(Q)=G}\sum_{Q_{l,\eta}\in\mathcal{S}_l(Q)}\sigma_1(Q_{l,\eta})\bigg)^{p/{p_1}}\nonumber \\
&&\quad\times\bigg(\sum_{l\in\bbZ, Q\in\mathcal{Q}_l\atop \Gamma(Q)=G}\sum_{Q_{l,\eta}\in\mathcal{S}_l(Q)}\sigma_2(Q_{l,\eta})\bigg)^{p/{p_2}}
\cdot (\mathbb{E}_{G}^{\sigma_1}f_1)^p\quad\mbox{(by Lemma~\ref{lm:l1})}\nonumber \\
&\le& [\vec{w}]_{A_{\vec{P}}}\bigg(\sum_{G\in\mathcal{G}}\sum_{l\in\bbZ, Q\in\mathcal{Q}_l\atop \Gamma(Q)=G}\sum_{Q_{l,\eta}\in\mathcal{S}_l(Q)}\sigma_1(Q_{l,\eta})\cdot (\mathbb{E}_{G}^{\sigma_1}f_1)^{p_1}\bigg)^{p/{p_1}}\nonumber \\
&&\quad\times\bigg(\sum_{G\in\mathcal{G}}\sum_{l\in\bbZ, Q\in\mathcal{Q}_l\atop \Gamma(Q)=G}\sum_{Q_{l,\eta}\in\mathcal{S}_l(Q)}\sigma_2(Q_{l,\eta})\bigg)^{p/{p_2}}
\nonumber \\
&\lesssim&[\vec{w}]_{A_{\vec{P}}}
\bigg(\sum_{G\in\mathcal{G}}\int_G M(\sigma_1 1_G)
\cdot (\mathbb{E}_{G}^{\sigma_1}f_1)^{p_1}\bigg)^{p/{p_1}}\cdot\bigg(\int_S M(\sigma_2 1_S)\bigg)^{p/{p_2}}\nonumber \\
&&\hskip 60mm \mbox{(by Lemma~\ref{lm:l0})}\nonumber \\
&\le& [\vec{w}]_{A_{\vec{P}}}[\sigma_1]_{A_\infty}^{p/{p_1}}[\sigma_2]_{A_\infty}^{p/{p_2}}\sigma_2(S)^{p/{p_2}}
\cdot\bigg(\sum_{G\in\mathcal{G}}\sigma_1(G)
\cdot (\mathbb{E}_{G}^{\sigma_1}f_1)^{p_1}\bigg)^{p/{p_1}}\nonumber \\
&\lesssim& [\vec{w}]_{A_{\vec{P}}}[\sigma_1]_{A_\infty}^{p/{p_1}}[\sigma_2]_{A_\infty}^{p/{p_2}}\sigma_2(S)^{p/{p_2}}
\cdot \|f_1\|_{L^{p_1}(\sigma_1)}^p,\label{eq:J2:2}
\end{eqnarray}
where (\ref{eq:e14}) is used in the last step.

For the $\mathbb{E}_R^{\sigma_1}f_1> 16 \mathbb{E}_{\Gamma(Q)}^{\sigma_1}f_1$ part, by H\"older's inequality, we have
\begin{eqnarray*}
&&\sum_{R\in\mathcal{Q}_{l+2}, R\subset Q\atop \mathbb{E}_R^{\sigma_1}f_1> 16 \mathbb{E}_{\Gamma(Q)}^{\sigma_1}f_1}
\int_{R}A^{}_{\mathscr{D},\mathcal{S}}(v_{\vec{w}}1_{E_l(Q)}, \sigma_2 1_{Q})(x) \sigma_1dx\cdot \mathbb{E}_{R}^{\sigma_1}f_1\\
&\le&\bigg(\!
  \sum_{R\in\mathcal{Q}_{l+2}, R\subset Q\atop \mathbb{E}_R^{\sigma_1}f_1> 16
    \mathbb{E}_{\Gamma(Q)}^{\sigma_1}f_1} \!\!\!
     \sigma_1(R)^{-p_1'/{p_1}}
      \bigg(\int_{R}A^{}_{\mathscr{D},\mathcal{S}}(v_{\vec{w}}1_{E_l(Q)}, \sigma_2 1_{Q})(x)
        \sigma_1dx\bigg)^{p_1'}\bigg)^{1/{p_1'}}\\
&&\quad\times \bigg(\sum_{R\in\mathcal{Q}_{l+2}, R\subset Q\atop \mathbb{E}_R^{\sigma_1}f_1> 16 \mathbb{E}_{\Gamma(Q)}^{\sigma_1}f_1}\sigma_1(R)(\mathbb{E}_{R}^{\sigma_1}f_1)^{p_1}\bigg)^{1/{p_1}}\\
&\le&\bigg(\int_{Q\cap \Omega_{l+2}}(A^{}_{\mathscr{D},\mathcal{S}}(v_{\vec{w}} 1_{E_l(Q)} ,\sigma_2 1_{Q}))^{p_1'}\sigma_1 dx \bigg)^{1/{p_1'}}\\
&&\quad\times \bigg(\sum_{R\in\mathcal{Q}_{l+2}, R\subset Q\atop \mathbb{E}_R^{\sigma_1}f_1> 16 \mathbb{E}_{\Gamma(Q)}^{\sigma_1}f_1}\sigma_1(R)(\mathbb{E}_{R}^{\sigma_1}f_1)^{p_1}\bigg)^{1/{p_1}}\\
&\lesssim& \bigg(\int_{Q\cap \Omega_{l+2}}(A^{}_{\mathscr{D},\mathcal{S}_l(Q)}(v_{\vec{w}} 1_{E_l(Q)} ,\sigma_2 1_{Q}))^{p_1'}\sigma_1 dx \bigg)^{1/{p_1'}}\\
&&\quad\times \bigg(\sum_{R\in\mathcal{Q}_{l+2}, R\subset Q\atop \mathbb{E}_R^{\sigma_1}f_1> 16 \mathbb{E}_{\Gamma(Q)}^{\sigma_1}f_1}\sigma_1(R)(\mathbb{E}_{R}^{\sigma_1}f_1)^{p_1}\bigg)^{1/{p_1}}\\
&\lesssim& [\vec{w}]_{A_{\vec{P}}}^{1/p}
 \bigg(\sum_{Q_{l,\eta}\in\mathcal{S}_l(Q)}v_{\vec{w}}(Q_{l,\eta})\bigg)^{1/{p'}}\cdot\bigg(\sum_{Q_{l,\eta}\in\mathcal{S}_l(Q)}\sigma_2(Q_{l,\eta})\bigg)^{1/{p_2}}\\
&&\quad\times \bigg(\sum_{R\in\mathcal{Q}_{l+2}, R\subset Q\atop \mathbb{E}_R^{\sigma_1}f_1> 16 \mathbb{E}_{\Gamma(Q)}^{\sigma_1}f_1}\sigma_1(R)(\mathbb{E}_{R}^{\sigma_1}f_1)^{p_1}\bigg)^{1/{p_1}}\\
&&\hskip 40mm \mbox{(by Lemma~\ref{lm:l1} and Lemma~\ref{lm:w})}\\
&\lesssim& [\vec{w}]_{A_{\vec{P}}}^{1/p}[v_{\vec{w}}]_{A_\infty}^{1/{p'}}v_{\vec{w}}(Q)^{1/{p'}}\cdot\bigg(\sum_{Q_{l,\eta}\in\mathcal{S}_l(Q)}\sigma_2(Q_{l,\eta})\bigg)^{1/{p_2}}\\
&&\quad\times \bigg(\sum_{R\in\mathcal{Q}_{l+2}, R\subset Q\atop \mathbb{E}_R^{\sigma_1}f_1> 16 \mathbb{E}_{\Gamma(Q)}^{\sigma_1}f_1}\sigma_1(R)(\mathbb{E}_{R}^{\sigma_1}f_1)^{p_1}\bigg)^{1/{p_1}},
\end{eqnarray*}
where Lemma~\ref{lm:l0} is used in the last step. It follows from (\ref{eq:beta}) and H\"older's inequality that
\begin{eqnarray}
&&\hskip -2.5em\sum_{l\in\bbZ, Q\in\mathcal{Q}_l}\!\!\!\bigg( \!\!\sum_{R\in\mathcal{Q}_{l+2}, R\subset Q\atop \mathbb{E}_R^{\sigma_1}f_1> 16 \mathbb{E}_{\Gamma(Q)}^{\sigma_1}f_1}
\!\!\!\!\int_{R}A^{}_{\mathscr{D},\mathcal{S}}(v_{\vec{w}}1_{E_l(Q)}, \sigma_2 1_{Q})(x) \sigma_1dx \!\cdot\!
  \mathbb{E}_{R}^{\sigma_1}f_1\bigg)^p\!\! v_{\vec{w}}(E_l(Q))^{1-p}\nonumber \\
&\lesssim& [\vec{w}]_{A_{\vec{P}}}[v_{\vec{w}}]_{A_\infty}^{p/{p'}}\cdot\bigg(\sum_{l\in\bbZ, Q\in\mathcal{Q}_l}\sum_{Q_{l,\eta}\in\mathcal{S}_l(Q)}\sigma_2(Q_{l,\eta})\bigg)^{p/{p_2}}\nonumber \\
&&\quad\times \bigg(\sum_{l\in\bbZ, Q\in\mathcal{Q}_l}\sum_{R\in\mathcal{Q}_{l+2}, R\subset Q\atop \mathbb{E}_R^{\sigma_1}f_1> 16 \mathbb{E}_{\Gamma(Q)}^{\sigma_1}f_1}\sigma_1(R)(\mathbb{E}_{R}^{\sigma_1}f_1)^{p_1}\bigg)^{p/{p_1}}\nonumber \\
&\lesssim& [\vec{w}]_{A_{\vec{P}}}[v_{\vec{w}}]_{A_\infty}^{p/{p'}}[\sigma_2]_{A_\infty}^{p/{p_2}}\sigma_2(S)^{p/{p_2}}\|f_1\|_{L^{p_1}(\sigma_1)}^p,
  \label{eq:J2:3}
\end{eqnarray}
where Lemma~\ref{lm:l0} and (\ref{eq:e9}) are used in the last step.

Putting (\ref{eq:J2:1}), (\ref{eq:J2:2}) and (\ref{eq:J2:3}) together, we get
\[
  J_2 \le [\vec{w}]_{A_{\vec{P}}} ( [v_{\vec{w}}]_{A_\infty}^{p/{p'}} + [\sigma_1]_{A_\infty}^{p/{p_1}} )
     [\sigma_2]_{A_\infty}^{p/{p_2}}\sigma_2(S)^{p/{p_2}}\|f_1\|_{L^{p_1}(\sigma_1)}^p.
\]
This completes the proof.
\end{proof}

Now we continue to prove Theorem~\ref{thm:main}.

\subsection{Estimate of $I_1$}
In this subsection, we consider $I_1$. we have
\begin{eqnarray*}
&&\int_{E_l(Q)}A^{}_{\mathscr{D},\mathcal{S}}(f_1\sigma_1 1_{Q\setminus \Omega_{l+2}}, f_2\sigma_2 1_{Q\setminus \Omega_{l+2}})(x) v_{\vec{w}}dx\\
&=&\int_{Q\setminus \Omega_{l+2}}A^{}_{\mathscr{D},\mathcal{S}}(f_1\sigma_1 1_{Q\setminus \Omega_{l+2}}, v_{\vec{w}} 1_{E_l(Q)})(x)  f_2\sigma_2dx\\
&\le&
  \bigg(\int_{Q\setminus \Omega_{l+2}} \!\! (A^{}_{\mathscr{D},\mathcal{S}}
     (f_1\sigma_1 1_{Q\setminus \Omega_{l+2}}, v_{\vec{w}}
     1_{Q}))^{p_2'}
  \sigma_2dx\bigg)^{1/{p_2'}}\!\cdot\bigg(\int_{Q\setminus      \Omega_{l+2}}
     \! f_2^{p_2}\sigma_2\bigg)^{1/{p_2}}
     \\
&\lesssim&[\vec{w}]_{A_{\vec{P}}}^{1/p}
  [v_{\vec{w}}]_{A_\infty}^{1/{p'}}([\sigma_1]_{A_\infty}^{1/{p_1}}+[\sigma_2]_{A_\infty}^{1/{p_2}})\cdot v_{\vec{w}}(Q)^{1/{p'}}\\
  &&\quad\times \bigg(\int_{Q\setminus \Omega_{l+2}}f_1^{p_1}\sigma_1\bigg)^{1/{p_1}}\cdot\bigg(\int_{Q\setminus \Omega_{l+2}}f_2^{p_2}\sigma_2\bigg)^{1/{p_2}},
\end{eqnarray*}
where Lemma~\ref{lm:l2} and Lemma~\ref{lm:w} are used. By H\"{o}lder's inequality, we have
\begin{eqnarray*}
I_1&\lesssim&[\vec{w}]_{A_{\vec{P}}}
  [v_{\vec{w}}]_{A_\infty}^{p/{p'}}([\sigma_1]_{A_\infty}^{1/{p_1}}+[\sigma_2]_{A_\infty}^{1/{p_2}})^p\bigg(\sum_{l\in\bbZ, Q\in\mathcal{Q}_l}\int_{Q\setminus \Omega_{l+2}}f_1^{p_1}\sigma_1\bigg)^{p/{p_1}}\\
  &&\quad\times\bigg(\sum_{l\in\bbZ, Q\in\mathcal{Q}_l}\int_{Q\setminus \Omega_{l+2}}f_2^{p_2}\sigma_1\bigg)^{p/{p_2}}\\
  &\le&[\vec{w}]_{A_{\vec{P}}}
  [v_{\vec{w}}]_{A_\infty}^{p/{p'}}([\sigma_1]_{A_\infty}^{1/{p_1}}+[\sigma_2]_{A_\infty}^{1/{p_2}})^p\|f_1\|_{L^{p_1}(\sigma_1)}^p
  \cdot\|f_2\|_{L^{p_2}(\sigma_2)}^p.
\end{eqnarray*}

\subsection{Estimates of $I_2$ and $I_3$}

In this subsection, we estimate $I_2$ and $I_3$.
Since they are similar, we only estimate $I_2$ and the other one can be estimated similarly
by the  symmetry.
We have
\begin{eqnarray*}
&&\int_{E_l(Q)}A^{}_{\mathscr{D},\mathcal{S}}(f_1\sigma_1 1_{Q\setminus \Omega_{l+2}}, f_2\sigma_2 1_{Q\cap \Omega_{l+2}})(x) v_{\vec{w}}dx\\
&=&\int_{Q\cap \Omega_{l+2}}A^{}_{\mathscr{D},\mathcal{S}}(f_1\sigma_1 1_{Q\setminus \Omega_{l+2}}, v_{\vec{w}} 1_{E_l(Q)} )(x) f_2\sigma_2 dx\\
&=&\sum_{\tilde{R}\in \mathcal{Q}_{l+2}\atop \tilde{R}\subset Q}\int_{\tilde{R}}A^{}_{\mathscr{D},\mathcal{S}}(f_1\sigma_1 1_{Q\setminus \Omega_{l+2}}, v_{\vec{w}} 1_{E_l(Q)} )(x) f_2\sigma_2 dx.
\end{eqnarray*}
Since $E_l(Q)\subset \Omega_{l+2}^c$ and $\tilde{R}\in \mathcal{Q}_{l+2}$,
$A^{}_{\mathscr{D},\mathcal{S}}(f_1\sigma_1 1_{Q\setminus \Omega_{l+2}}, v_{\vec{w}} 1_{E_l(Q)} )(x)$
 is a constant for $x\in \tilde{R}$. Therefore,
\begin{eqnarray}
&&\int_{E_l(Q)}A^{}_{\mathscr{D},\mathcal{S}}(f_1\sigma_1 1_{Q\setminus \Omega_{l+2}}, f_2\sigma_2 1_{Q\cap \Omega_{l+2}})(x) v_{\vec{w}}dx\nonumber\\
&=&\sum_{\tilde{R}\in \mathcal{Q}_{l+2}\atop \tilde{R}\subset Q}\int_{\tilde{R}}A^{}_{\mathscr{D},\mathcal{S}}(f_1\sigma_1 1_{Q\setminus \Omega_{l+2}}, v_{\vec{w}} 1_{E_l(Q)} )(x)\sigma_2 dx
\cdot \mathbb{E}_{\tilde{R}}^{\sigma_2}f_2\nonumber\\
&\le&16\sum_{\tilde{R}\in \mathcal{Q}_{l+2}, \tilde{R}\subset Q\atop \mathbb{E}_{\tilde{R}}^{\sigma_2}f_2\le 16 \mathbb{E}_{\tilde{\Gamma}(Q)}^{\sigma_2}f_2}\int_{R}A^{}_{\mathscr{D},\mathcal{S}}(f_1\sigma_1 1_{Q\setminus \Omega_{l+2}}, v_{\vec{w}} 1_{E_l(Q)} )(x)\sigma_2 dx
\cdot \mathbb{E}_{\tilde{\Gamma}(Q)}^{\sigma_2}f_2\nonumber\\
&&\quad+\sum_{\tilde{R}\in \mathcal{Q}_{l+2}, \tilde{R}\subset Q\atop \mathbb{E}_{\tilde{R}}^{\sigma_2}f_2> 16 \mathbb{E}_{\tilde{\Gamma}(Q)}^{\sigma_2}f_2}\int_{R}A^{}_{\mathscr{D},\mathcal{S}}(f_1\sigma_1 1_{Q\setminus \Omega_{l+2}}, v_{\vec{w}} 1_{E_l(Q)} )(x)\sigma_2 dx
\cdot \mathbb{E}_{\tilde{R}}^{\sigma_2}f_2,\nonumber\\
&&\hskip 40mm\label{eq:e21}
\end{eqnarray}
where $\tilde{\Gamma}(Q)$ are the principal cubes with respect to $f_2$ and $\sigma_2$.

\subsubsection{The part with $\mathbb{E}_{\tilde{R}}^{\sigma_2}f_2\le 16 \mathbb{E}_{\tilde{\Gamma}(Q)}^{\sigma_2}f_2$}
For this part, we have
\begin{eqnarray*}
&&\sum_{\tilde{R}\in \mathcal{Q}_{l+2}, \tilde{R}\subset Q\atop \mathbb{E}_{\tilde{R}}^{\sigma_2}f_2\le 16 \mathbb{E}_{\tilde{\Gamma}(Q)}^{\sigma_2}f_2}\int_{\tilde{R}}A^{}_{\mathscr{D},\mathcal{S}}(f_1\sigma_1 1_{Q\setminus \Omega_{l+2}}, v_{\vec{w}} 1_{E_l(Q)} )(x)\sigma_2 dx
\cdot \mathbb{E}_{\tilde{\Gamma}(Q)}^{\sigma_2}f_2\\
&&\quad \le \int_{Q}A^{}_{\mathscr{D},\mathcal{S}}(f_1\sigma_1 1_{Q\setminus \Omega_{l+2}}, v_{\vec{w}} 1_{E_l(Q)} )(x)\sigma_2 dx
\cdot \mathbb{E}_{\tilde{\Gamma}(Q)}^{\sigma_2}f_2.
\end{eqnarray*}
For $\tilde{G}\in\tilde{\mathcal{G}}$, where $\tilde{\mathcal{G}}$ is the set consisting
of principal cubes
with respect to $f_2$ and $\sigma_2$, set
\[
  g_1(x)=\sum_{l\in\bbZ, Q\in\mathcal{Q}_l\atop \tilde{\Gamma}(Q)=\tilde{G}}f_1(x) 1_{Q\setminus \Omega_{l+2}}(x).
\]
Then by the disjointness of $E_l(Q)$ in $l\in\bbZ$ and $Q\in\mathcal{Q}_l$,
\begin{eqnarray}
&& \sum_{l\in\bbZ, Q\in\mathcal{Q}_l}\!\!\!\bigg(\!\!\sum_{\tilde{R}\in \mathcal{Q}_{l+2},\tilde{ R}\subset Q\atop \mathbb{E}_{\tilde{R}}^{\sigma_2}f_2\le 16 \mathbb{E}_{\tilde{\Gamma}(Q)}^{\sigma_2}f_2}
\int_{\tilde{R}}A^{}_{\mathscr{D},\mathcal{S}}(f_1\sigma_1 1_{Q\setminus \Omega_{l+2}}, v_{\vec{w}} 1_{E_l(Q)} )(x)\sigma_2 dx\nonumber\\
&&\qquad\qquad
\cdot \mathbb{E}_{\tilde{\Gamma}(Q)}^{\sigma_2}f_2\bigg)^p\cdot v_{\vec{w}}(E_l(Q))^{1-p}\nonumber\\
&\le&\sum_{l\in\bbZ, Q\in\mathcal{Q}_l}v_{\vec{w}}(E_l(Q))^{1-p} \left(\int_{Q}A^{}_{\mathscr{D},\mathcal{S}}(f_1\sigma_1 1_{Q\setminus \Omega_{l+2}}, v_{\vec{w}} 1_{E_l(Q)} )(x)\sigma_2 dx\right)^p  \nonumber\\
&&\qquad \qquad \times \left(\mathbb{E}_{\tilde{\Gamma}(Q)}^{\sigma_2}f_2\right)^p\nonumber\\
&\le&\sum_{\tilde{G}\in\tilde{\mathcal{G}}}\sum_{l\in\bbZ, Q\in\mathcal{Q}_l\atop \tilde{\Gamma}(Q)=\tilde{G}} \int_{E_l(Q)}(A^{}_{\mathscr{D},\mathcal{S}}(f_1\sigma_1 1_{Q\setminus \Omega_{l+2}} , \sigma_2 1_{Q} )(x))^pv_{\vec{w}} dx \cdot(\mathbb{E}_{\tilde{\Gamma}(Q)}^{\sigma_2}f_2)^p\nonumber\\
&\le& \sum_{\tilde{G}\in\tilde{\mathcal{G}}} \int_{\tilde{G}}(A^{}_{\mathscr{D},\mathcal{S}}(g_1\sigma_1 , \sigma_2 1_{\tilde{G}} )(x))^pv_{\vec{w}} dx \cdot(\mathbb{E}_{\tilde{G}}^{\sigma_2}f_2)^p\nonumber\\
&\lesssim& [\vec{w}]_{A_{\vec{P}}}[\sigma_2]_{A_\infty}^{p/{p_2}}
  ([\sigma_1]_{A_\infty}^{1/{p_1}}+[v_{\vec{w}}]_{A_\infty}^{1/{p'}})^p
    \sum_{\tilde{G}\in\tilde{\mathcal{G}}}
     \bigg(\!\int \! g_1^{p_1}\sigma_1\bigg)^{p/{p_1}} \!\!
\sigma_2(\tilde{G})^{p/{p_2}}(\mathbb{E}_{\tilde{G}}^{\sigma_2}f_2)^p\nonumber\\
&&\hskip 60mm \mbox{(be Lemma~\ref{lm:l2})}\nonumber\\
&\le&[\vec{w}]_{A_{\vec{P}}}[\sigma_2]_{A_\infty}^{p/{p_2}}([\sigma_1]_{A_\infty}^{1/{p_1}}+[v_{\vec{w}}]_{A_\infty}^{1/{p'}})^p
\bigg(\sum_{\tilde{G}\in\tilde{\mathcal{G}}}\int g_1^{p_1}\sigma_1\bigg)^{p/{p_1}} \nonumber\\
&&\qquad \times
\bigg(\sum_{\tilde{G}\in\tilde{\mathcal{G}}}\sigma_2(\tilde{G})(\mathbb{E}_{\tilde{G}}^{\sigma_2}f_2)^{p_2}\bigg)^{p/{p_2}}\nonumber\\
&\le&[\vec{w}]_{A_{\vec{P}}}[\sigma_2]_{A_\infty}^{p/{p_2}}([\sigma_1]_{A_\infty}^{1/{p_1}}+[v_{\vec{w}}]_{A_\infty}^{1/{p'}})^p\|f_1\|_{L^{p_1}(\sigma_1)}^p
  \cdot\|f_2\|_{L^{p_2}(\sigma_2)}^p,\label{eq:e22}
\end{eqnarray}
where (\ref{eq:e15}) is used in the last step.

\subsubsection{The part with $\mathbb{E}_{\tilde{R}}^{\sigma_2}f_2> 16 \mathbb{E}_{\tilde{\Gamma}(Q)}^{\sigma_2}f_2$}\label{sec:s322}
By H\"older's inequality, we have
\begin{eqnarray*}
&&\sum_{\tilde{R}\in \mathcal{Q}_{l+2},\tilde{ R}\subset Q\atop \mathbb{E}_{\tilde{R}}^{\sigma_2}f_2> 16 \mathbb{E}_{\tilde{\Gamma}(Q)}^{\sigma_2}f_2}\int_{\tilde{R}}A^{}_{\mathscr{D},\mathcal{S}}(f_1\sigma_1 1_{Q\setminus \Omega_{l+2}}, v_{\vec{w}} 1_{E_l(Q)} )(x)\sigma_2 dx
\cdot \mathbb{E}_{\tilde{R}}^{\sigma_2}f_2\\
&\le& \bigg(\sum_{\tilde{R}\in \mathcal{Q}_{l+2}\atop \tilde{R}\subset Q} \!\!\!\! \sigma_2(\tilde{R})^{-p_2'/{p_2}}\bigg(\int_{\tilde{R}}A^{}_{\mathscr{D},\mathcal{S}}(f_1\sigma_1 1_{Q\setminus \Omega_{l+2}}, v_{\vec{w}} 1_{E_l(Q)} )(x)\sigma_2 dx\bigg)^{p_2'}\bigg)^{1/{p_2'}}\\
&&\quad\cdot \bigg(\sum_{\tilde{R}\in\mathcal{Q}_{l+2},\tilde{R}\subset Q\atop \mathbb{E}_{\tilde{R}}^{\sigma_2}f_2> 16 \mathbb{E}_{\tilde{\Gamma}(Q)}^{\sigma_2}f_2}\sigma_2(\tilde{R})(\mathbb{E}_{\tilde{R}}^{\sigma_2}f_2)^{p_2}\bigg)^{1/{p_2}}\\
&\le&\bigg(\int_{Q\cap \Omega_{l+2}}(A^{}_{\mathscr{D},\mathcal{S}}(f_1\sigma_1 1_{Q\setminus \Omega_{l+2}}, v_{\vec{w}} 1_{E_l(Q)} ))^{p_2'}\sigma_2 dx \bigg)^{1/{p_2'}}\\
&&\quad\cdot \bigg(\sum_{\tilde{R}\in\mathcal{Q}_{l+2},\tilde{R}\subset Q\atop \mathbb{E}_{\tilde{R}}^{\sigma_2}f_2> 16 \mathbb{E}_{\tilde{\Gamma}(Q)}^{\sigma_2}f_2}\sigma_2(\tilde{R})(\mathbb{E}_{\tilde{R}}^{\sigma_2}f_2)^{p_2}\bigg)^{1/{p_2}}\\
&\lesssim&[\vec{w}]_{A_{\vec{P}}}^{1/p}
  [v_{\vec{w}}]_{A_\infty}^{1/{p'}}([\sigma_1]_{A_\infty}^{1/{p_1}}+[\sigma_2]_{A_\infty}^{1/{p_2}}) \bigg(\int_{Q\setminus \Omega_{l+2}}f_1^{p_1}\sigma_1\bigg)^{1/{p_1}}
v_{\vec{w}}(Q)^{1/{p'}}\\
&&\quad\cdot \bigg(\sum_{\tilde{R}\in\mathcal{Q}_{l+2},\tilde{R}\subset Q\atop \mathbb{E}_{\tilde{R}}^{\sigma_2}f_2> 16 \mathbb{E}_{\tilde{\Gamma}(Q)}^{\sigma_2}f_2}\sigma_2(\tilde{R})(\mathbb{E}_{\tilde{R}}^{\sigma_2}f_2)^{p_2}\bigg)^{1/{p_2}},
\end{eqnarray*}
where, again, Lemma~\ref{lm:l2} and Lemma~\ref{lm:w} are used in the last step.
Therefore, by H\"{o}lder's inequality, we get
\begin{eqnarray}
&&\sum_{l\in\bbZ, Q\in\mathcal{Q}_l}\bigg(\sum_{\tilde{R}\in \mathcal{Q}_{l+2},\tilde{ R}\subset Q\atop \mathbb{E}_{\tilde{R}}^{\sigma_2}f_2> 16 \mathbb{E}_{\tilde{\Gamma}(Q)}^{\sigma_2}f_2}\int_{\tilde{R}}A^{}_{\mathscr{D},\mathcal{S}}(f_1\sigma_1 1_{Q\setminus \Omega_{l+2}}, v_{\vec{w}} 1_{E_l(Q)} )(x)\sigma_2 dx\nonumber\\&&\quad
\cdot \mathbb{E}_{\tilde{R}}^{\sigma_2}f_2\bigg)^p\cdot v_{\vec{w}}(E_l(Q))^{1-p}\nonumber\\
&\lesssim& [\vec{w}]_{A_{\vec{P}}}
  [v_{\vec{w}}]_{A_\infty}^{p/{p'}}([\sigma_1]_{A_\infty}^{1/{p_1}}+[\sigma_2]_{A_\infty}^{1/{p_2}})^p \bigg(\sum_{l\in\bbZ, Q\in\mathcal{Q}_l}\int_{Q\setminus \Omega_{l+2}}f_1^{p_1}\sigma_1\bigg)^{p/{p_1}}\nonumber\\
&&\quad\cdot \bigg(\sum_{l\in\bbZ, Q\in\mathcal{Q}_l}\sum_{\tilde{R}\in\mathcal{Q}_{l+2},\tilde{R}\subset Q\atop \mathbb{E}_{\tilde{R}}^{\sigma_2}f_2> 16 \mathbb{E}_{\tilde{\Gamma}(Q)}^{\sigma_2}f_2}\sigma_2(\tilde{R})(\mathbb{E}_{\tilde{R}}^{\sigma_2}f_2)^{p_2}\bigg)^{p/{p_2}}\nonumber\\
&\le& [\vec{w}]_{A_{\vec{P}}}
  [v_{\vec{w}}]_{A_\infty}^{p/{p'}}([\sigma_1]_{A_\infty}^{1/{p_1}}+[\sigma_2]_{A_\infty}^{1/{p_2}})^p\|f_1\|_{L^{p_1}(\sigma_1)}^p
  \cdot\|f_2\|_{L^{p_2}(\sigma_2)}^p,\label{eq:e23}
\end{eqnarray}
where (\ref{eq:e10}) is used in the last step.

Combining (\ref{eq:e21}), (\ref{eq:e22}) and (\ref{eq:e23}), we get
\begin{eqnarray*}
 I_2&\lesssim& [\vec{w}]_{A_{\vec{P}}}\bigg([\sigma_1]_{A_\infty}^{p/{p_1}}
  [\sigma_2]_{A_\infty}^{p/{p_2}}+[v_{\vec{w}}]_{A_\infty}^{p/{p'}}
  ([\sigma_1]_{A_\infty}^{p/{p_1}}+
  [\sigma_2]_{A_\infty}^{p/{p_2}})\bigg)\\
  &&\quad\quad\times\|f_1\|_{L^{p_1}(\sigma_1)}^p\cdot
  \|f_2\|_{L^{p_2}(\sigma_2)}^p.
\end{eqnarray*}
By symmetry, we also have
\begin{eqnarray*}
 I_3&\lesssim& [\vec{w}]_{A_{\vec{P}}}\bigg([\sigma_1]_{A_\infty}^{p/{p_1}}
  [\sigma_2]_{A_\infty}^{p/{p_2}}+[v_{\vec{w}}]_{A_\infty}^{p/{p'}}
  ([\sigma_1]_{A_\infty}^{p/{p_1}}+
  [\sigma_2]_{A_\infty}^{p/{p_2}})\bigg)\\
  &&\quad\quad\times\|f_1\|_{L^{p_1}(\sigma_1)}^p\cdot
  \|f_2\|_{L^{p_2}(\sigma_2)}^p.
\end{eqnarray*}

\subsection{Estimate of $I_4$}
Similarly to the previous arguments, we have
\begin{eqnarray*}
&&\int_{E_l(Q)}A^{}_{\mathscr{D},\mathcal{S}}(f_1\sigma_1 1_{Q\cap \Omega_{l+2}}, f_2\sigma_2 1_{Q\cap \Omega_{l+2}})(x) v_{\vec{w}}dx \\
&=&\sum_{R\in\mathcal{Q}_{l+2}\atop R\subset Q}\sum_{\tilde{R}\in \mathcal{Q}_{l+2}\atop \tilde{R}\subset Q}\int_{\tilde{R}}A^{}_{\mathscr{D},\mathcal{S}}(v_{\vec{w}} 1_{E_l(Q)}, \sigma_1 1_R)(x)\sigma_2dx
\cdot\mathbb{E}_R^{\sigma_1}f_1\cdot\mathbb{E}_{\tilde{R}}^{\sigma_2}f_2\\
&\le&\sum_{R\in\mathcal{Q}_{l+2}, R\subset Q\atop \mathbb{E}_R^{\sigma_1}f_1\le 16 \mathbb{E}_{\Gamma(Q)}^{\sigma_1}f_1}\sum_{\tilde{R}\in \mathcal{Q}_{l+2}, \tilde{R}\subset Q\atop \mathbb{E}_{\tilde{R}}^{\sigma_2}f_2\le 16 \mathbb{E}_{\tilde{\Gamma}(Q)}^{\sigma_2}f_2}\int_{\tilde{R}}A^{}_{\mathscr{D},\mathcal{S}}(v_{\vec{w}} 1_{E_l(Q)}, \sigma_1 1_R)(x)\sigma_2dx\\
&&\qquad\qquad\times\mathbb{E}_R^{\sigma_1}f_1\cdot\mathbb{E}_{\tilde{R}}^{\sigma_2}f_2\\
&&\quad+\sum_{R\in\mathcal{Q}_{l+2}, R\subset Q\atop \mathbb{E}_R^{\sigma_1}f_1\le 16 \mathbb{E}_{\Gamma(Q)}^{\sigma_1}f_1}\sum_{\tilde{R}\in \mathcal{Q}_{l+2}, \tilde{R}\subset Q\atop \mathbb{E}_{\tilde{R}}^{\sigma_2}f_2> 16 \mathbb{E}_{\tilde{\Gamma}(Q)}^{\sigma_2}f_2}\int_{\tilde{R}}A^{}_{\mathscr{D},\mathcal{S}}(v_{\vec{w}} 1_{E_l(Q)}, \sigma_1 1_R)(x)\sigma_2dx\\
&&\qquad\qquad\times\mathbb{E}_R^{\sigma_1}f_1\cdot\mathbb{E}_{\tilde{R}}^{\sigma_2}f_2\\
&&\quad+\sum_{R\in\mathcal{Q}_{l+2}, R\subset Q\atop \mathbb{E}_R^{\sigma_1}f_1> 16 \mathbb{E}_{\Gamma(Q)}^{\sigma_1}f_1}\sum_{\tilde{R}\in \mathcal{Q}_{l+2}, \tilde{R}\subset Q\atop \mathbb{E}_{\tilde{R}}^{\sigma_2}f_2\le 16 \mathbb{E}_{\tilde{\Gamma}(Q)}^{\sigma_2}f_2}\int_{\tilde{R}}A^{}_{\mathscr{D},\mathcal{S}}(v_{\vec{w}} 1_{E_l(Q)}, \sigma_1 1_R)(x)\sigma_2dx\\
&&\qquad\qquad\times\mathbb{E}_R^{\sigma_1}f_1\cdot\mathbb{E}_{\tilde{R}}^{\sigma_2}f_2\\
&&\quad+\sum_{R\in\mathcal{Q}_{l+2}, R\subset Q\atop \mathbb{E}_R^{\sigma_1}f_1> 16 \mathbb{E}_{\Gamma(Q)}^{\sigma_1}f_1}\sum_{\tilde{R}\in \mathcal{Q}_{l+2}, \tilde{R}\subset Q\atop \mathbb{E}_{\tilde{R}}^{\sigma_2}f_2> 16 \mathbb{E}_{\tilde{\Gamma}(Q)}^{\sigma_2}f_2}\int_{\tilde{R}}A^{}_{\mathscr{D},\mathcal{S}}(v_{\vec{w}} 1_{E_l(Q)}, \sigma_1 1_R)(x)\sigma_2dx\\
&&\qquad\qquad\times\mathbb{E}_R^{\sigma_1}f_1\cdot\mathbb{E}_{\tilde{R}}^{\sigma_2}f_2\\
&:=& I_{41}(l,Q)+I_{42}(l,Q)+I_{43}(l,Q)+I_{44}(l,Q).
\end{eqnarray*}

\subsubsection{Estimate of $I_{41}$}\label{sec:s311}
We have
\begin{eqnarray*}
&&\sum_{R\in\mathcal{Q}_{l+2}, R\subset Q\atop \mathbb{E}_R^{\sigma_1}
     f_1\le 16 \mathbb{E}_{\Gamma(Q)}^{\sigma_1}f_1}
  \sum_{\tilde{R}\in \mathcal{Q}_{l+2}, \tilde{R}\subset Q\atop
     \mathbb{E}_{\tilde{R}}^{\sigma_2}f_2\le 16 \mathbb{E}_{\tilde{\Gamma}(Q)}^{\sigma_2}f_2}
  \int_{\tilde{R}}A^{}_{\mathscr{D},\mathcal{S}}(v_{\vec{w}} 1_{E_l(Q)}, \sigma_1 1_R)(x)\sigma_2dx \\
&& \qquad\qquad\qquad \qquad \times
      \mathbb{E}_R^{\sigma_1}f_1\cdot\mathbb{E}_{\tilde{R}}^{\sigma_2}f_2\\
&\le&16^2 \int_{Q\cap \Omega_{l+2}}A^{}_{\mathscr{D},\mathcal{S}}(v_{\vec{w}} 1_{E_l(Q)}, \sigma_1 1_{Q\cap \Omega_{l+2}})(x)\sigma_2dx
\cdot\mathbb{E}_{\Gamma(Q)}^{\sigma_1}f_1\cdot\mathbb{E}_{\tilde{\Gamma}(Q)}^{\sigma_2}f_2\\
&\le&16^2 \int_{E_l(Q)}A^{}_{\mathscr{D},\mathcal{S}}(\sigma_1 1_Q, \sigma_2 1_Q)(x)v_{\vec{w}}dx
\cdot\mathbb{E}_{\Gamma(Q)}^{\sigma_1}f_1\cdot\mathbb{E}_{\tilde{\Gamma}(Q)}^{\sigma_2}f_2.
\end{eqnarray*}

Similarly to (\ref{eq:e20}), for $x\in E_l(Q)$, we have
\[
  A^{}_{\mathscr{D},\mathcal{S}}(\sigma_1 1_{Q}, \sigma_2 1_{Q})(x)
  \le 2A^{}_{\mathscr{D},\mathcal{S}_l(Q)}(\sigma_1 1_{Q}, \sigma_2 1_{Q})(x).
\]

By Lemma~\ref{lm:l1}, we have
\begin{eqnarray*}
&&\sum_{l\in\bbZ}\sum_{Q\in\mathcal{Q}_l}v_{\vec{w}}(E_l(Q))^{1-p}I_{41}(l,Q)^p\\
&\le&\sum_{l\in\bbZ}\sum_{Q\in\mathcal{Q}_l}v_{\vec{w}}(E_l(Q))^{1-p}\bigg( \int_{E_l(Q)}A^{}_{\mathscr{D},\mathcal{S}}(\sigma_1 1_Q, \sigma_2 1_Q)(x)v_{\vec{w}}dx\bigg)^p\\&&\quad
\cdot(\mathbb{E}_{\Gamma(Q)}^{\sigma_1}f_1\cdot\mathbb{E}_{\tilde{\Gamma}(Q)}^{\sigma_2}f_2)^p\\
&\lesssim&
\sum_{l\in\bbZ}\sum_{Q\in\mathcal{Q}_l}v_{\vec{w}}(E_l(Q))^{1-p}\bigg( \int_{E_l(Q)}A^{}_{\mathscr{D},\mathcal{S}_l(Q)}(\sigma_1 1_Q, \sigma_2 1_Q)(x)v_{\vec{w}}dx\bigg)^p\\&&\quad
\cdot(\mathbb{E}_{\Gamma(Q)}^{\sigma_1}f_1\cdot\mathbb{E}_{\tilde{\Gamma}(Q)}^{\sigma_2}f_2)^p\\
&\le&
  \sum_{l\in\bbZ}\sum_{Q\in\mathcal{Q}_l} \int_{E_l(Q)}(A^{}_{\mathscr{D},\mathcal{S}_l(Q)}(\sigma_1 1_{Q}, \sigma_2 1_{Q}))^p v_{\vec{w}}dx
\cdot(\mathbb{E}_{\Gamma(Q)}^{\sigma_1}f_1\cdot\mathbb{E}_{\tilde{\Gamma}(Q)}^{\sigma_2}f_2)^p\\
&\lesssim&[\vec{w}]_{A_{\vec{p}}}\bigg( \sum_{G\in\mathcal{G}}\sum_{l\in\bbZ, Q\in\mathcal{Q}_l\atop \Gamma(Q)=G}\sum_{Q_{l,\eta}\in \mathcal{S}_l(Q)}\sigma_1(Q_{l,\eta})(\mathbb{E}_{G}^{\sigma_1}f_1)^{p_1}\bigg)^{p/{p_1}}\\
   &&\quad\cdot\bigg( \sum_{\tilde{G}\in\tilde{\mathcal{G}}}\sum_{l\in\bbZ, Q\in\mathcal{Q}_l\atop \tilde{\Gamma}(Q)=\tilde{G}}\sum_{Q_{l,\eta}\in \mathcal{S}_l(Q)}\sigma_2(Q_{l,\eta})(\mathbb{E}_{\tilde{G}}^{\sigma_2}f_2)^{p_2}\bigg)^{p/{p_2}}\\
&&\hskip 40mm \mbox{(by Lemma~\ref{lm:l1} and H\"older's inequality)}\\
   &\le&[\vec{w}]_{A_{\vec{p}}}[\sigma_1]_{A_\infty}^{p/{p_1}}[\sigma_2]_{A_\infty}^{p/{p_2}}\bigg( \sum_{G\in\mathcal{G}}
   \sigma_1(G)(\mathbb{E}_{G}^{\sigma_1}f_1)^{p_1}\bigg)^{p/{p_1}} \\
&&\qquad \times
    \bigg( \sum_{\tilde{G}\in\tilde{\mathcal{G}}}
      \sigma_2(\tilde{G})(\mathbb{E}_{\tilde{G}}^{\sigma_2}f_2)^{p_2}\bigg)^{p/{p_2}}\quad\mbox{(by Lemma~\ref{lm:l0})}\\
   &\lesssim&[\vec{w}]_{A_{\vec{p}}}[\sigma_1]_{A_\infty}^{p/{p_1}}[\sigma_2]_{A_\infty}^{p/{p_2}}\|f_1\|_{L^{p_1}(\sigma_1)}^{p}
   \|f_2\|_{L^{p_2}(\sigma_2)}^{p},
\end{eqnarray*}
where (\ref{eq:e14}) and (\ref{eq:e15}) are used in the last step.

\subsubsection{Estimates of $I_{42}$ and $I_{43}$}
We have
\begin{eqnarray*}
&&\hskip -15pt\sum_{R\in\mathcal{Q}_{l+2}, R\subset Q\atop \mathbb{E}_R^{\sigma_1}f_1\le 16 \mathbb{E}_{\Gamma(Q)}^{\sigma_1}f_1}
  \sum_{\tilde{R}\in \mathcal{Q}_{l+2}, \tilde{R}\subset Q\atop
     \mathbb{E}_{\tilde{R}}^{\sigma_2}f_2> 16 \mathbb{E}_{\tilde{\Gamma}(Q)}^{\sigma_2}f_2}
\hskip -10pt\int_{\tilde{R}}
  \! A^{}_{\mathscr{D},\mathcal{S}}(v_{\vec{w}} 1_{E_l(Q)}, \sigma_1 1_R)(x)\sigma_2dx
\cdot\mathbb{E}_R^{\sigma_1}f_1\cdot\mathbb{E}_{\tilde{R}}^{\sigma_2}f_2\\
&\le&\sum_{\tilde{R}\in \mathcal{Q}_{l+2}, \tilde{R}\subset Q\atop \mathbb{E}_{\tilde{R}}^{\sigma_2}f_2> 16 \mathbb{E}_{\tilde{\Gamma}(Q)}^{\sigma_2}f_2}\int_{\tilde{R}}A^{}_{\mathscr{D},\mathcal{S}}(v_{\vec{w}} 1_{E_l(Q)}, \sigma_1 1_Q)(x)\sigma_2dx
\cdot\mathbb{E}_{\Gamma(Q)}^{\sigma_1}f_1\cdot\mathbb{E}_{\tilde{R}}^{\sigma_2}f_2.
\end{eqnarray*}
For simplicity, set
\[
  g_Q:=\sum_{\tilde{R}\in \mathcal{Q}_{l+2}, \tilde{R}\subset Q\atop \mathbb{E}_{\tilde{R}}^{\sigma_2}f_2> 16 \mathbb{E}_{\tilde{\Gamma}(Q)}^{\sigma_2}f_2}
  \mathbb{E}_{\tilde{R}}^{\sigma_2}f_2 1_{\tilde{R}}.
\]
We have
\begin{eqnarray*}
&&\hskip -25pt\sum_{l\in\bbZ, Q\in\mathcal{Q}_l}
 \!\! v_{\vec{w}}(E_l(Q))^{1-p}
  \bigg(\int_{E_l(Q)}\!A^{}_{\mathscr{D},\mathcal{S}}(\sigma_1 1_Q, g_Q\sigma_2)(x)v_{\vec{w}}dx
\cdot\mathbb{E}_{\Gamma(Q)}^{\sigma_1}f_1\bigg)^p\\
&=&\sum_{G\in\mathcal{G}}\sum_{l\in\bbZ, Q\in\mathcal{Q}_l\atop \Gamma(Q)=G}\!\!
  v_{\vec{w}}(E_l(Q))^{1-p}
\bigg(\int_{E_l(Q)}\! A^{}_{\mathscr{D},\mathcal{S}}(\sigma_1 1_Q, g_Q\sigma_2)(x)v_{\vec{w}}dx
\cdot\mathbb{E}_{G}^{\sigma_1}f_1\bigg)^p.
\end{eqnarray*}
Set
\[
  h_{G}:=\sup_{Q: Q\in\mathcal{Q}_l,l\in\bbZ \atop \Gamma(Q)=G}g_Q.
\]
We have
\begin{eqnarray*}
\sum_{G\in \mathcal{G}}\|h_G\|_{L^{p_2}(\sigma_2)}^{p_2}
&=&\sum_{G\in \mathcal{G}}\int h_{G}^{p_2}\sigma_2\\
&\le& \sum_{G\in \mathcal{G}}\sum_{Q: Q\in\mathcal{Q}_l,l\in\bbZ \atop \Gamma(Q)=G}\int g_Q^{p_2}\sigma_2\\
&=&\sum_{G\in \mathcal{G}}\sum_{Q: Q\in\mathcal{Q}_l,l\in\bbZ \atop \Gamma(Q)=G}\sum_{\tilde{R}\in \mathcal{Q}_{l+2}, \tilde{R}\subset Q\atop \mathbb{E}_{\tilde{R}}^{\sigma_2}f_2> 16 \mathbb{E}_{\tilde{\Gamma}(Q)}^{\sigma_2}f_2}
  (\mathbb{E}_{\tilde{R}}^{\sigma_2}f_2)^{p_2}\sigma_2(\tilde{R})\\
  &=&\sum_{l\in\bbZ, Q\in\mathcal{Q}_l}\sum_{\tilde{R}\in \mathcal{Q}_{l+2}, \tilde{R}\subset Q\atop \mathbb{E}_{\tilde{R}}^{\sigma_2}f_2> 16 \mathbb{E}_{\tilde{\Gamma}(Q)}^{\sigma_2}f_2}
  (\mathbb{E}_{\tilde{R}}^{\sigma_2}f_2)^{p_2}\sigma_2(\tilde{R})\\
  &\le&\|f_2\|_{L^{p_2}(\sigma_2)}^{p_2}. \quad\mbox{(by (\ref{eq:e10}))}
\end{eqnarray*}
Since $E_l(Q)$ are disjoint,  we have
\begin{eqnarray*}
&&\sum_{l\in\bbZ}\sum_{Q\in\mathcal{Q}_l}v_{\vec{w}}(E_l(Q))^{1-p}I_{42}(l,Q)^p\\
&\le&\sum_{G\in\mathcal{G}}\sum_{l\in\bbZ, Q\in\mathcal{Q}_l\atop \Gamma(Q)=G}
  \!v_{\vec{w}}(E_l(Q))^{1-p}
  \bigg(\int_{E_l(Q)}\!
  A^{}_{\mathscr{D},\mathcal{S}}(\sigma_1 1_Q, g_Q\sigma_2)(x)v_{\vec{w}}dx
\cdot\mathbb{E}_{G}^{\sigma_1}f_1\bigg)^p\\
&\le& \sum_{G\in\mathcal{G}}\sum_{l\in\bbZ, Q\in\mathcal{Q}_l\atop \Gamma(Q)=G}
\int_{E_l(Q)}(A^{}_{\mathscr{D},\mathcal{S}}(\sigma_1 1_G, h_G\sigma_2))^pv_{\vec{w}}dx
\cdot(\mathbb{E}_{G}^{\sigma_1}f_1)^p\\
&\le&\sum_{G\in\mathcal{G}}\int_G (A^{}_{\mathscr{D},\mathcal{S}}(\sigma_1 1_G, h_G\sigma_2))^p v_{\vec{w}}dx\cdot (\mathbb{E}_{G}^{\sigma_1}f_1)^p\\
&\lesssim& [\vec{w}]_{A_{\vec{P}}}[\sigma_1]_{A_\infty}^{p/{p_1}}([v_{\vec{w}}]_{A_\infty}^{1/{p'}}+[\sigma_2]_{A_\infty}^{1/{p_2}})^p
\sum_{G\in\mathcal{G}}\sigma_1(G)^{p/{p_1}}(\mathbb{E}_{G}^{\sigma_1}f_1)^p\|h_G\|_{L^{p_2}(\sigma_2)}^p\\
&&\hskip 60mm \mbox{(by Lemma~\ref{lm:l2})}\\
&\le&[\vec{w}]_{A_{\vec{P}}}[\sigma_1]_{A_\infty}^{p/{p_1}}([v_{\vec{w}}]_{A_\infty}^{1/{p'}}+[\sigma_2]_{A_\infty}^{1/{p_2}})^p\|f_1\|_{L^{p_1}(\sigma_1)}^p \bigg(\sum_{G\in\mathcal{G}}\|h_G\|_{L^{p_2}(\sigma_2)}^{p_2}\bigg)^{p/{p_2}}\\
&&\hskip 40mm \mbox{(by H\"older's inequality and (\ref{eq:e14}))}\\
&\le&[\vec{w}]_{A_{\vec{P}}}[\sigma_1]_{A_\infty}^{p/{p_1}}([v_{\vec{w}}]_{A_\infty}^{1/{p'}}+[\sigma_2]_{A_\infty}^{1/{p_2}})^p\|f_1\|_{L^{p_1}(\sigma_1)}^p
\cdot\|f_2\|_{L^{p_2}(\sigma_2)}^p.
\end{eqnarray*}
By symmetry, we get
\begin{eqnarray*}
\sum_{l\in\bbZ}\sum_{Q\in\mathcal{Q}_l}v_{\vec{w}}(E_l(Q))^{1-p}I_{43}(l,Q)^p
&\lesssim&[\vec{w}]_{A_{\vec{P}}}[\sigma_2]_{A_\infty}^{p/{p_2}}([v_{\vec{w}}]_{A_\infty}^{1/{p'}}+[\sigma_1]_{A_\infty}^{1/{p_1}})^p
\\&&\quad\quad\times\|f_1\|_{L^{p_1}(\sigma_1)}^p
\cdot\|f_2\|_{L^{p_2}(\sigma_2)}^p.
\end{eqnarray*}

\subsubsection{Estimate of $I_{44}$}
We have
\begin{eqnarray*}
&&\hskip -1.6em\sum_{R\in\mathcal{Q}_{l+2}, R\subset Q\atop \mathbb{E}_R^{\sigma_1}f_1> 16 \mathbb{E}_{\Gamma(Q)}^{\sigma_1}f_1}
 \sum_{\tilde{R}\in \mathcal{Q}_{l+2}, \tilde{R}\subset Q\atop \mathbb{E}_{\tilde{R}}^{\sigma_2}f_2> 16 \mathbb{E}_{\tilde{\Gamma}(Q)}^{\sigma_2}f_2}
 \hskip -0.7em \int_{\tilde{R}}A^{}_{\mathscr{D},\mathcal{S}}(v_{\vec{w}} 1_{E_l(Q)}, \sigma_1 1_R)(x)\sigma_2dx
\cdot\mathbb{E}_R^{\sigma_1}f_1\cdot\mathbb{E}_{\tilde{R}}^{\sigma_2}f_2\\
&\le&\bigg(\!\!\sum_{R\in \mathcal{Q}_{l+2}, R\subset Q\atop \mathbb{E}_{R}^{\sigma_1}f_1> 16 \mathbb{E}_{\Gamma(Q)}^{\sigma_1}f_1}\!\!
 \sigma_1(R)^{-p_1'/{p_1}}\bigg(\int_{R}A^{}_{\mathscr{D},\mathcal{S}}(v_{\vec{w}} 1_{E_l(Q)}, g_Q\sigma_2 )(x)\sigma_1dx\bigg)^{p_1'}\bigg)^{1/{p_1'}}\\
 &&\quad\cdot \bigg(\sum_{R\in \mathcal{Q}_{l+2}, R\subset Q\atop \mathbb{E}_{R}^{\sigma_1}f_1> 16 \mathbb{E}_{\Gamma(Q)}^{\sigma_1}f_1}
 \sigma_1(R)(\mathbb{E}_{R}^{\sigma_1}f_1)^{p_1}\bigg)^{1/{p_1}}\\
 &\le&\!\bigg(\!\int_{Q}(A^{}_{\mathscr{D},\mathcal{S}}(v_{\vec{w}} 1_{E_l(Q)}, g_Q\sigma_2 ))^{p_1'}\sigma_1dx\!\bigg)^{1/{p_1'}}
  \bigg(\hskip -0.8em \sum_{R\in \mathcal{Q}_{l+2},
    R\subset Q\atop \mathbb{E}_{R}^{\sigma_1}f_1> 16 \mathbb{E}_{\Gamma(Q)}^{\sigma_1}f_1}
   \hskip -1.5em \sigma_1(R)(\mathbb{E}_{R}^{\sigma_1}f_1)^{p_1}\!\!\bigg)^{1/{p_1}}\\
 &\lesssim&[\vec{w}]_{A_{\vec{P}}}^{1/p}
  [v_{\vec{w}}]_{A_\infty}^{1/{p'}}([\sigma_1]_{A_\infty}^{1/{p_1}}+[\sigma_2]_{A_\infty}^{1/{p_2}}) v_{\vec{w}}(Q)^{1/{p'}}\|g_Q\|_{L^{p_2}(\sigma_2)}\\
  &&\quad\cdot \bigg(\sum_{R\in \mathcal{Q}_{l+2}, R\subset Q\atop \mathbb{E}_{R}^{\sigma_1}f_1> 16 \mathbb{E}_{\Gamma(Q)}^{\sigma_1}f_1}
 \sigma_1(R)(\mathbb{E}_{R}^{\sigma_1}f_1)^{p_1}\bigg)^{1/{p_1}},
\end{eqnarray*}
where Lemma~\ref{lm:l2} and Lemma~\ref{lm:w} are used in the last step.
It follows that
\begin{eqnarray*}
&&\sum_{l\in\bbZ}\sum_{Q\in\mathcal{Q}_l}v_{\vec{w}}(E_l(Q))^{1-p}I_{44}(l,Q)^p\\
&=&\sum_{l\in\bbZ}\sum_{Q\in\mathcal{Q}_l}v_{\vec{w}}(E_l(Q))^{1-p}\bigg( \sum_{R\in\mathcal{Q}_{l+2}, R\subset Q\atop \mathbb{E}_R^{\sigma_1}f_1> 16 \mathbb{E}_{\Gamma(Q)}^{\sigma_1}f_1}\sum_{\tilde{R}\in \mathcal{Q}_{l+2}, \tilde{R}\subset Q\atop \mathbb{E}_{\tilde{R}}^{\sigma_2}f_2> 16 \mathbb{E}_{\tilde{\Gamma}(Q)}^{\sigma_2}f_2} \mathbb{E}_{R}^{\sigma_1}f_1\cdot\mathbb{E}_{\tilde{R}}^{\sigma_2}f_2\\
&&\quad \int_{\tilde{R}}A^{}_{\mathscr{D},\mathcal{S}}(v_{\vec{w}} 1_{E_l(Q)}, \sigma_1 1_R)(x)\sigma_2dx \bigg)^p\\
&\lesssim&[\vec{w}]_{A_{\vec{P}}}
  [v_{\vec{w}}]_{A_\infty}^{p/{p'}}([\sigma_1]_{A_\infty}^{1/{p_1}}+[\sigma_2]_{A_\infty}^{1/{p_2}})^p
\bigg(\sum_{l\in\bbZ}\sum_{Q\in\mathcal{Q}_l}\|g_Q\|_{L^{p_2}(\sigma_2)}^{p_2}\bigg)^{p/{p_2}}\\
&&\quad\cdot \bigg(\sum_{l\in\bbZ}\sum_{Q\in\mathcal{Q}_l}\sum_{R\in \mathcal{Q}_{l+2}, R\subset Q\atop \mathbb{E}_{R}^{\sigma_1}f_1> 16 \mathbb{E}_{\Gamma(Q)}^{\sigma_1}f_1}
 \sigma_1(R)(\mathbb{E}_{R}^{\sigma_1}f_1)^{p_1}\bigg)^{p/{p_1}}\\
 &=&[\vec{w}]_{A_{\vec{P}}}
  [v_{\vec{w}}]_{A_\infty}^{p/{p'}}([\sigma_1]_{A_\infty}^{1/{p_1}}+[\sigma_2]_{A_\infty}^{1/{p_2}})^p\\
&&\qquad \times
  \bigg(\sum_{l\in\bbZ, Q\in\mathcal{Q}_l}\sum_{\tilde{R}\in\mathcal{Q}_{l+2},\tilde{R}\subset Q\atop \mathbb{E}_{\tilde{R}}^{\sigma_2}f_2> 16 \mathbb{E}_{\tilde{\Gamma}(Q)}^{\sigma_2}f_2}\sigma_2(\tilde{R})(\mathbb{E}_{\tilde{R}}^{\sigma_2}f_2)^{p_2}\bigg)^{p/{p_2}}\\
&&\qquad\times \bigg(\sum_{l\in\bbZ}\sum_{Q\in\mathcal{Q}_l}\sum_{R\in \mathcal{Q}_{l+2}, R\subset Q\atop \mathbb{E}_{R}^{\sigma_1}f_1> 16 \mathbb{E}_{\Gamma(Q)}^{\sigma_1}f_1}
 \sigma_1(R)(\mathbb{E}_{R}^{\sigma_1}f_1)^{p_1}\bigg)^{p/{p_1}}\\
 &\lesssim&[\vec{w}]_{A_{\vec{P}}}
  [v_{\vec{w}}]_{A_\infty}^{p/{p'}}([\sigma_1]_{A_\infty}^{1/{p_1}}+[\sigma_2]_{A_\infty}^{1/{p_2}})^p
  \|f_1\|_{L^{p_1}(\sigma_1)}^p\cdot\|f_2\|_{L^{p_2}(\sigma_2)}^p,
\end{eqnarray*}
where  (\ref{eq:e9}) and (\ref{eq:e10}) are used in the last step.

Summing up the above arguments, we get
\begin{eqnarray*}
 I_4&\lesssim& [\vec{w}]_{A_{\vec{P}}}\bigg([\sigma_1]_{A_\infty}^{p/{p_1}}
  [\sigma_2]_{A_\infty}^{p/{p_2}}+[v_{\vec{w}}]_{A_\infty}^{p/{p'}}
  ([\sigma_1]_{A_\infty}^{p/{p_1}}+
  [\sigma_2]_{A_\infty}^{p/{p_2}})\bigg)\\
  &&\quad\quad\times\|f_1\|_{L^{p_1}(\sigma_1)}^p\cdot
  \|f_2\|_{L^{p_2}(\sigma_2)}^p.
\end{eqnarray*}
This completes the proof of (\ref{eq:main:1}).

\subsection{Sharpness of the strong type estimates }
Finally, we prove the sharpness.
We use the example  in \cite{LS}. That is,
\[
 R_1(\vec{f})(x)\!=p.v.\!\int_{(\bbR^n)^m}\frac{\sum_{j=1}^m(x_1-(y_j)_1)}{(\sum_{j=1}^m|x-y_j|^2)^{(nm+1)/2}}f_1(y_1)\cdots f_m(y_m)dy_1\cdots dy_m.
\]
Assume that $0<\varepsilon<1$. Let
\[
  f_i(x)=|x|^{\varepsilon-n}\chi^{}_{(0,1]^n}(x)\quad\mbox{and}\quad w_i(x)=|x|^{(n-\varepsilon)(p_i-1)}, \quad i=1, \cdots, m.
\]
Then we have $v_{\vec{w}}=|x|^{(n-\varepsilon)(mp-1)}$,
\[
  [\vec{w}]_{A_{\vec{P}}}=[v_{\vec{w}}]_{A_{mp}}\approx (1/\varepsilon)^{mp-1}
   \quad\mbox{and}\quad [\sigma_i]_{A_\infty}\lesssim 1/{\varepsilon}\quad\mbox{$i=1,\cdots,m$}.
\]
Moreover,
\[
  \|R_1(\vec{f})\|_{L^p(v_{\vec{w}})}\ge (1/\varepsilon)^{m+1/p}
   \quad\mbox{and}\quad \prod_{i=1}^m \|f_i\|_{L^{p_i}(w_i)}\approx (1/\varepsilon)^{1/p}.
\]
It follows that our result is sharp whenever $\max_i\{p_i\}\le p'/{(mp-1)}$.

This completes the proof of Theorem~\ref{thm:main}.

\section{Proof of Theorem~\ref{thm:m1}}

In this section, we give the proof  of Theorem~\ref{thm:m1}.
We begin with the Whitney decomposition.
\begin{Proposition}\cite[Proposition 7.3.4]{Gra}\label{prop:whitney}
Let $\Omega$ be an open nonempty proper
subset of $\bbR^n$. Then there exists a family of closed cubes $\{Q_j\}_j$ such that
\begin{enumerate}
\item $\bigcup_j Q_j=\Omega$ and the $Q_j$'s have disjoint interiors;
\item $\sqrt{n}l(Q_j)\le \dist(Q_j, \Omega^c)\le 4\sqrt{n}l(Q_j)$;
\item if the boundaries of two cubes $Q_j$ and $Q_k$ touch, then
\[
  \frac{1}{4}\le \frac{l(Q_j)}{l(Q_k)}\le 4;
\]
\item  there exists some constant $1<\gamma<5/4$ such that $\sum_{j}\chi^{}_{\gamma Q_j}(x)\le C_n$.
\end{enumerate}
\end{Proposition}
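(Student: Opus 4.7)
The plan is to follow the classical dyadic construction. The first step is to assemble a large pool $\Omega^*$ of candidate cubes satisfying the distance bound in (ii). For each $x\in\Omega$, the quantity $d(x):=\dist(x,\Omega^c)$ is finite and positive, so one can choose the integer $k$ with $2\sqrt n\,2^{-k}\le d(x)<2\sqrt n\,2^{-k+1}$ and let $Q_x$ be the unique dyadic cube of sidelength $2^{-k}$ containing $x$. A direct estimate combining $\mathrm{diam}(Q_x)=\sqrt n\,l(Q_x)$ with the choice of $k$ gives
\[
\sqrt n\,l(Q_x)\le \dist(Q_x,\Omega^c)\le 4\sqrt n\,l(Q_x).
\]
Let $\Omega^*$ be the set of all dyadic cubes $Q$ satisfying this two-sided bound. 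Then $\Omega^*$ contains every $Q_x$, and hence $\bigcup_{Q\in\Omega^*}Q=\Omega$.

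The second step is to extract a disjoint subfamily. Because any two dyadic cubes are nested or interior-disjoint, and because the sidelengths of members of $\Omega^*$ containing a fixed $x$ are bounded above by $d(x)/\sqrt n$, the ascending chain of cubes in $\Omega^*$ passing through $x$ has a unique maximum. Let $\{Q_j\}$ denote the maximal elements of $\Omega^*$; these have pairwise disjoint interiors, still cover $\Omega$, and retain (ii). This establishes (i) and (ii).

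For (iii), if $Q_j$ and $Q_k$ share a boundary point $x_0$, then $x_0$ lies in both closed cubes, so by (ii),
\[
\sqrt n\,l(Q_j)\le \dist(Q_j,\Omega^c)\le \dist(x_0,\Omega^c)\le \dist(Q_k,\Omega^c)+\mathrm{diam}(Q_k)\le 5\sqrt n\,l(Q_k),
\]
giving $l(Q_j)\le 5\,l(Q_k)$. Since $l(Q_j)/l(Q_k)$ is an integer power of $2$, this forces the ratio to be at most $4$; by symmetry (iii) follows. For (iv), fix $\gamma\in(1,5/4)$, for instance $\gamma=9/8$. The dilate $\gamma Q_j$ extends $Q_j$ by only $\tfrac{\gamma-1}{2}\,l(Q_j)$ in each coordinate direction, so $\gamma Q_j\cap \gamma Q_k\ne\emptyset$ forces $Q_j$ and $Q_k$ to be touching or equal; (iii) then guarantees comparable sidelengths, and a volume packing argument in $\bbR^n$ bounds the number of $j$'s with $x\in\gamma Q_j$ by a constant $C_n$ depending only on the dimension.

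The main obstacle is really only bookkeeping: matching the constant $\sqrt n$ in (ii) with the constant $5$ (and hence $4$) in (iii), and choosing $\gamma<5/4$ small enough so that the packing argument produces a uniform dimensional bound. There is no analytical subtlety beyond the dyadic geometry, so for the full numerical verification one may ultimately defer to Proposition 7.3.4 of \cite{Gra}.
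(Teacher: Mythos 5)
The paper does not prove this proposition at all --- it is quoted verbatim from Grafakos \cite[Proposition 7.3.4]{Gra} --- so the only benchmark is the classical construction, which is exactly what you reproduce. Your treatment of (i)--(iii) is the standard one and is correct: the choice of $k$ yields the two-sided bound for $Q_x$, maximal cubes in $\Omega^*$ exist because any admissible cube through $x$ has sidelength at most $\dist(x,\Omega^c)/\sqrt n$, and the power-of-two observation upgrades $l(Q_j)\le 5\,l(Q_k)$ to $l(Q_j)\le 4\,l(Q_k)$.

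The one step you should not wave through is the claim in (iv) that $\gamma Q_j\cap\gamma Q_k\ne\emptyset$ forces $Q_j$ and $Q_k$ to touch ``because the dilate extends by only $\tfrac{\gamma-1}{2}l(Q_j)$.'' That implication is false for general dyadic cubes: with $\gamma=9/8$, the cubes $[1,2)$ and $[-16,0)$ do not touch, yet their $\gamma$-dilates $[0.9375,2.0625]$ and $[-17,1]$ intersect; the same happens in $\bbR^n$ whenever $l(Q_k)\ge 16\,l(Q_j)$ and the $\ell^\infty$-gap equals $l(Q_j)$. What excludes this configuration for Whitney cubes is property (ii), not the dyadic geometry: if such a pair existed, every point of $Q_j$ would lie within distance $\lesssim\sqrt n\,l(Q_j)$ of $Q_k$, forcing $\dist(Q_j,\Omega^c)\ge \dist(Q_k,\Omega^c)-C\sqrt n\,l(Q_j)\ge (16-C)\sqrt n\,l(Q_j)$, contradicting $\dist(Q_j,\Omega^c)\le 4\sqrt n\,l(Q_j)$. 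Alternatively, and more cleanly, skip the touching claim altogether: for $x\in\gamma Q_j$ the bounds in (ii) give $\dist(x,\Omega^c)\asymp\sqrt n\,l(Q_j)$ with absolute constants, so all cubes whose $\gamma$-dilates contain $x$ have comparable sidelengths and sit inside a ball of comparable radius centered at $x$, and the volume packing argument you invoke then yields $C_n$. With that one repair your proof is complete and coincides with the argument in the cited reference.
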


Next we give a weak type estimate for the multilinear maximal function. Recall that
the multilinear maximal function is defined by
\[
  \mathcal{M}(\vec{f})=\sup_{Q\ni x}\prod_{i=1}^m\frac{1}{|Q|}\int_{Q}|f_i(y_i)|dy_i
\]
and the dyadic maximal function is defined by
\[
   \mathcal{M}^{\mathscr{D}}(\vec{f})(x)=\sup_{Q\ni x, Q\in\mathscr{D}
    }\prod_{i=1}^m\frac{1}{|Q|}\int_{Q}|f_i(y_i)|dy_i.
\]
\begin{Lemma}\label{lm:mweak}
Let $\vec{P}=(p_1,\cdots,p_m)$ with $1/p=1/{p_1}+\cdots+1/{p_m}$
and $1< p_1,\cdots,p_m<\infty$. Suppose that $\vec{w}=(w_1,\cdots,w_m)$ with $\vec{w}\in A_{\vec{P}}$.
Then
\[
\|\mathcal{M}(\vec{f})\|_{L^{p,\infty}(v_{\vec{w}})}\le C_{m,n,\vec{P}}[\vec{w}]_{A_{\vec{P}}}^{1/p}\prod_{i=1}^m\|f_i\|_{L^{p_i}(w_i)}.
\]
\end{Lemma}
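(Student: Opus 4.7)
The plan is to reduce the estimate to the dyadic maximal function and then run a Calder\'on--Zygmund-type stopping argument whose only ingredients are H\"older's inequality and the very definition of $[\vec w]_{A_{\vec P}}$.

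First, by the standard ``adjacent shifted dyadic grids'' trick, every cube $Q\subset\bbR^n$ is contained in some dyadic cube of comparable measure belonging to one of finitely many dyadic lattices $\mathscr D^{(\alpha)}$, $\alpha=1,\ldots,3^n$. This produces a pointwise bound
\[
\mathcal M(\vec f)(x)\le C_n \sum_{\alpha=1}^{3^n} \mathcal M^{\mathscr D^{(\alpha)}}(\vec f)(x),
\]
and since $A_{\vec P}$ is insensitive to such translations, it suffices to establish the weak-type inequality with a single $\mathcal M^{\mathscr D}$ in place of $\mathcal M$.

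Next, fix $\lambda>0$ and let $\{Q_j\}$ be the family of maximal dyadic cubes of $\mathscr D$ that are contained in $\Omega_\lambda:=\{\mathcal M^{\mathscr D}(\vec f)>\lambda\}$; they are pairwise disjoint, cover $\Omega_\lambda$, and satisfy $\prod_i \mathbb E_{Q_j}|f_i|>\lambda$. On each $Q_j$ I would apply H\"older's inequality with exponents $p_i,p_i'$, using $w_i^{-p_i'/p_i}=\sigma_i$, to obtain
\[
\mathbb E_{Q_j}|f_i|\le\bigl(\mathbb E_{Q_j}|f_i|^{p_i}w_i\bigr)^{1/p_i}\bigl(\mathbb E_{Q_j}\sigma_i\bigr)^{1/p_i'}.
\]
Raising to the $p$-th power, taking the product over $i$, and multiplying by $v_{\vec w}(Q_j)$ makes the $A_{\vec P}$ expression
\[
\frac{v_{\vec w}(Q_j)}{|Q_j|}\prod_{i=1}^m\Bigl(\frac{\sigma_i(Q_j)}{|Q_j|}\Bigr)^{p/p_i'}\le [\vec w]_{A_{\vec P}}
\]
appear on the nose, leaving
\[
\lambda^p v_{\vec w}(Q_j)\le [\vec w]_{A_{\vec P}}\prod_{i=1}^m\Bigl(\int_{Q_j}|f_i|^{p_i}w_i\Bigr)^{p/p_i}.
\]

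To finish, I would sum over $j$ and invoke the finite-dimensional H\"older inequality with exponents $p_i/p$ (which satisfy $\sum_i p/p_i=1$ since $\sum_i 1/p_i=1/p$), then use the disjointness of $\{Q_j\}$:
\[
\lambda^p v_{\vec w}(\Omega_\lambda)\le [\vec w]_{A_{\vec P}}\prod_{i=1}^m\Bigl(\sum_j\int_{Q_j}|f_i|^{p_i}w_i\Bigr)^{p/p_i}\le [\vec w]_{A_{\vec P}}\prod_{i=1}^m\|f_i\|_{L^{p_i}(w_i)}^p.
\]
Taking $p$-th roots delivers the lemma. The only step that requires some external input is the reduction from continuous to dyadic; once inside the dyadic framework the argument is a transparent exponent count, so I do not foresee any serious obstacle.
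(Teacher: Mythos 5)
Your proposal is correct and follows essentially the same route as the paper: both reduce $\mathcal M$ to finitely many dyadic maximal operators (the paper cites the $2^n$-grid covering from Dami\'an--Lerner--P\'erez, you invoke the $3^n$ shifted dyadic lattices, which is an equivalent standard device), then estimate $v_{\vec w}(\Omega_\lambda)$ by decomposing $\Omega_\lambda$ into maximal dyadic cubes, applying H\"older with exponents $p_i,p_i'$ on each cube to make the $A_{\vec P}$ expression appear, and finally using the discrete H\"older inequality with exponents $p_i/p$ together with the disjointness of the stopping cubes.
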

\begin{proof}
 In \cite{DLP}, the authors proved that
there exists $2^n$ family of dyadic grids $\mathscr{D}_\beta$ such that
\[
  \mathcal{M}(\vec{f})(x)\le 6^{mn}\sum_{\beta=1}^{2^n}\mathcal{M}^{\mathscr{D}_\beta}(\vec{f})(x),
\]
where
\[
    \mathcal{M}^{\mathscr{D}_\beta}(\vec{f})(x)=\sup_{Q\ni x, Q\in\mathscr{D}_\beta
    }\prod_{i=1}^m\frac{1}{|Q|}\int_{Q}|f_i(y_i)|dy_i.
\]
For some fixed dyadic grid $\mathscr{D}$,
\[
  \{x\in\bbR^n: \mathcal{M}^{\mathscr{D}}(\vec{f})>\alpha\}=\bigcup_k Q_k,
\]
where $\{Q_k\}_k$ are disjoint dyadic cubes in $\mathscr{D}$ and
\[
  \prod_{i=1}^m\frac{1}{|Q_k|}\int_{Q_k}|f_i(y_i)|dy_i>\alpha.
\]
It follows that
\begin{eqnarray*}
\alpha^p \bigg(\sum_{k}v_{\vec{w}}(Q_k)\bigg)
&\le&  \sum_k \bigg( \prod_{i=1}^m \frac{1}{|Q_k|}\int_{Q_k} |f_i|dy_i\bigg)^p v_{\vec{w}}(Q_k)  \\
&\le&  \sum_k \bigg( \prod_{i=1}^m \int_{Q_k} |f_i|^{p_i}w_i dy_i\bigg)^{p/{p_i}}
   \frac{v_{\vec{w}}(Q_k)\prod_{i=1}^m\sigma_i(Q_k)^{p/{p_i'}}}{|Q_k|^{mp}}  \\
&\le& [\vec{w}]_{A_{\vec{P}}}
   \sum_k \bigg( \prod_{i=1}^m \int_{Q_k} |f_i|^{p_i}w_i dy_i\bigg)^{p/{p_i}}  \\
&\le& [\vec{w}]_{A_{\vec{P}}} \prod_{i=1}^m\bigg(\sum_k   \int_{Q_k} |f_i|^{p_i}w_i dy_i\bigg)^{p/{p_i}}\\
&\le& [\vec{w}]_{A_{\vec{P}}} \prod_{i=1}^m \|f_i\|_{L^{p_i}(w_i)}^p.
\end{eqnarray*}
Hence
\begin{eqnarray*}
 \|\mathcal{M}^{\mathscr{D}}(\vec{f})\|_{L^{p,\infty}(v_{\vec{w}})}&=&\sup_{\alpha>0}
  \alpha v_{\vec{w}} (\{x\in\bbR^n: \mathcal{M}^{\mathscr{D}}(\vec{f})>\alpha\} )^{1/{p}}\\
  &=& \sup_{\alpha>0} \alpha \bigg(\sum_{k}v_{\vec{w}}(Q_k)\bigg)^{1/{p}}\\
  &\le& [\vec{w}]_{A_{\vec{P}}}^{1/p}\prod_{i=1}^m \|f_i\|_{L^{p_i}(w_i)}.
\end{eqnarray*}
This completes the proof.
\end{proof}

The following result can be proved similarly to \cite[p. 1240]{LOPTT} and we omit the details.

\begin{Lemma}\label{lm:l10}
Let $T$ be an $m$-linear Calder\'{o}n-Zygmund operator and $Q$ be a cube. Set $Q^*=10\sqrt{n}Q$ and $Q^{**}=10\sqrt{n}Q^*$.
 Suppose that $x,z\in Q$ and $y\in Q^*$. Then
\begin{eqnarray*}
&&|T(f_1,\cdots,f_{i-1},f_i\chi^{}_{(Q^{**})^c},f_{i+1},\cdots,f_m)(x)-T(f_1,\cdots,f_{i-1},f_i\chi^{}_{(Q^{**})^c},\\
&&\quad f_{i+1},\cdots,f_m)(y)|\le C\mathcal{M}(f_1,\cdots,f_m)(z).
\end{eqnarray*}
\end{Lemma}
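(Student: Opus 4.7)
\textbf{Proof plan for Lemma~\ref{lm:l10}.} I would follow the standard dyadic annular decomposition argument in the spirit of \cite{LOPTT}. First, since $x\in Q\subset Q^{**}$ and $\operatorname{supp}(f_i\chi^{}_{(Q^{**})^c})\subset (Q^{**})^c$, the point $x$ lies off the diagonal relative to $(f_1,\ldots,f_i\chi^{}_{(Q^{**})^c},\ldots,f_m)$, and the kernel representation is valid. The same holds for $y\in Q^*\subset Q^{**}$. So the difference in question equals
\[
  \int_{(\bbR^n)^m}\bigl[K(x,\vec y)-K(y,\vec y)\bigr]f_i(y_i)\chi^{}_{(Q^{**})^c}(y_i)\prod_{k\ne i}f_k(y_k)\,d\vec y.
\]
Applicability of the H\"older smoothness of $K$ in the first slot must be checked: $|x-y|\le \sqrt{n}\,l(Q^*)$, while $y_i\in(Q^{**})^c$ with $x\in Q^{**}$ forces $|x-y_i|\gtrsim l(Q^{**})=10\sqrt n\,l(Q^*)$, so $|x-y|\le\tfrac12|x-y_i|\le\tfrac12\max_{0\le k\le m}|x-y_k|$.

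Next I would decompose the domain. For $j\ge 1$ let $A_j=2^jQ^{**}\setminus 2^{j-1}Q^{**}$, so $(Q^{**})^c=\bigsqcup_{j\ge 1}A_j$. For each $k\ne i$ and $r_k\ge 0$ let $A_k^0=Q^{**}$ and $A_k^{r_k}=2^{r_k}Q^{**}\setminus 2^{r_k-1}Q^{**}$ for $r_k\ge 1$, so $\bbR^n=\bigsqcup_{r_k\ge 0}A_k^{r_k}$. On the product region where $y_i\in A_j$ and $y_k\in A_k^{r_k}$ for $k\ne i$, set $J=\max(j,\max_{k\ne i}r_k)\ge 1$. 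Every $y_k$ then lies in $2^JQ^{**}$, and the index $k^*$ achieving the maximum satisfies $|x-y_{k^*}|\gtrsim 2^Jl(Q)$, so
\[
  \sum_{k,l=0}^m|y_k-y_l|\ge|x-y_{k^*}|\gtrsim 2^Jl(Q).
\]
Combined with $|x-y|\lesssim l(Q)$, the smoothness estimate yields
\[
  \bigl|K(x,\vec y)-K(y,\vec y)\bigr|\lesssim\frac{l(Q)^\varepsilon}{(2^Jl(Q))^{mn+\varepsilon}}=\frac{2^{-J\varepsilon}}{(2^Jl(Q))^{mn}}.
\]

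Finally I would bound each piece by the multilinear maximal function and sum. Since $z\in Q\subset 2^JQ^{**}$,
\[
  \int_{A_j}|f_i|\,dy_i\prod_{k\ne i}\int_{A_k^{r_k}}|f_k|\,dy_k\le \prod_{k=1}^m\int_{2^JQ^{**}}|f_k|\lesssim (2^Jl(Q))^{mn}\,\mathcal M(\vec f)(z),
\]
so the contribution of each region is $\lesssim 2^{-J\varepsilon}\mathcal M(\vec f)(z)$. For fixed $J\ge 1$, the number of index tuples $(j,(r_k)_{k\ne i})$ with maximum equal to $J$ is $O(J^{m-1})$; since $\sum_{J\ge 1}J^{m-1}2^{-J\varepsilon}<\infty$, summing gives the desired bound $C\,\mathcal M(\vec f)(z)$.

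The main obstacle I expect is the bookkeeping of the lower bound $\sum_{k,l}|y_k-y_l|\gtrsim 2^Jl(Q)$: one must carefully identify the variable achieving the maximal scale $J$ to ensure the factor $2^{-J\varepsilon}$ extracted from $|x-y|^\varepsilon/|x-y_{k^*}|^\varepsilon$ is strong enough to absorb the combinatorial factor $J^{m-1}$ from summation over tuples; once this is organized, the remaining estimates are routine kernel calculations.
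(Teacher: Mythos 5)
Your proposal is correct and follows the same standard annular-decomposition argument as the reference \cite[p.~1240]{LOPTT} that the paper cites in place of a written proof: verify the kernel representation and the applicability of the H\"older smoothness in the output slot (which holds uniformly since $y_i\in(Q^{**})^c$ forces $|x-y_i|\ge 5\sqrt n\, l(Q^*)$ while $|x-y|\le\sqrt n\, l(Q^*)$), decompose all variables into dyadic annuli around $Q^{**}$ indexed by $J=\max(j,\max_{k\ne i}r_k)\ge 1$, extract the gain $2^{-J\varepsilon}$ from the smoothness estimate, bound the resulting averages by $\mathcal M(\vec f)(z)$, and sum; the combinatorial count $O(J^{m-1})$ of tuples at level $J$ is absorbed by the geometric decay. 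No gaps; the bookkeeping you flagged (identifying the variable at the maximal scale to anchor the lower bound on $\sum_{k,l}|y_k-y_l|$) is handled correctly.
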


Next we give a characterization of the weak boundedness of
multilinear Calder\'{o}n-Zygmund operators.
\begin{Lemma}\label{lm:weak}
Let $1<p,p_1,p_2<\infty$ and $\vec{w}\in A_{\vec{P}}$, where $\vec{w}:=(w_1,  w_2)$ and $\vec{P}:=(p_1,  p_2)$ with $1/p=1/{p_1}+1/{p_2}$.
Suppose that $T$ is a multilinear Calder\'{o}n-Zygmund operator. Then the following assertions are equivalent.
\begin{enumerate}
\item $\|T(f_1\sigma_1,f_2\sigma_2)\|_{L^{p,\infty}(v_{\vec{w}})}\le C\prod_{i=1}^2\|f_i\|_{L^{p_i}(\sigma_i)}$;
\item $\int_Q |T(f_1\sigma_1\chi^{}_Q,f_2\sigma_2\chi^{}_{Q})(x)|v_{\vec{w}}(x)dx\le C'\prod_{i=1}^2\|f_i\|_{L^{p_i}(\sigma_i)}v_{\vec{w}}(Q)^{1/{p'}}$ for all cubes $Q\subset \bbR^n$
and all functions $f_i\in L^{p_i}(\sigma_i)$, $i=1,2$.
\end{enumerate}
\end{Lemma}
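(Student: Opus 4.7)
The direction $(1)\Rightarrow(2)$ is immediate from H\"older's inequality in Lorentz spaces. Since $L^{p,\infty}(v_{\vec w})$ pairs with $L^{p',1}(v_{\vec w})$ in duality and $\|\chi_Q\|_{L^{p',1}(v_{\vec w})}=p'v_{\vec w}(Q)^{1/p'}$, one has
\[
\int_Q|T(f_1\sigma_1\chi_Q,f_2\sigma_2\chi_Q)|v_{\vec w}\,dx\le\|T(f_1\sigma_1\chi_Q,f_2\sigma_2\chi_Q)\|_{L^{p,\infty}(v_{\vec w})}\|\chi_Q\|_{L^{p',1}(v_{\vec w})},
\]
and applying (1) to $(f_1\chi_Q,f_2\chi_Q)$ bounds the first factor by $C\prod_i\|f_i\|_{L^{p_i}(\sigma_i)}$.

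For $(2)\Rightarrow(1)$, my plan is a Calder\'on--Zygmund decomposition driven by the multilinear maximal function. Fix $\alpha>0$, choose a small constant $c_0>0$ (to be fixed later), and set $\Omega_\alpha:=\{\mathcal{M}(f_1\sigma_1,f_2\sigma_2)>c_0\alpha\}$. Using the identity $\|f_i\sigma_i\|_{L^{p_i}(w_i)}=\|f_i\|_{L^{p_i}(\sigma_i)}$, Lemma~\ref{lm:mweak} already gives $v_{\vec w}(\Omega_\alpha)\lesssim\alpha^{-p}\prod_i\|f_i\|_{L^{p_i}(\sigma_i)}^p$. Applying the Whitney decomposition $\Omega_\alpha=\bigcup_j Q_j$ of Proposition~\ref{prop:whitney} and passing to the enlargements $Q_j^{**}$ from Lemma~\ref{lm:l10}, the bounded overlap of $\{\gamma Q_j^{**}\}_j$ together with the doubling of $v_{\vec w}\in A_{mp}$ gives $v_{\vec w}(\bigcup_j Q_j^{**})\lesssim v_{\vec w}(\Omega_\alpha)$. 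It therefore remains to bound the $v_{\vec w}$-measure of the residual set
\[
E:=\bigl\{x\notin\textstyle\bigcup_j Q_j^{**}:|T(f_1\sigma_1,f_2\sigma_2)(x)|>\alpha\bigr\}
\]
by the same quantity $\alpha^{-p}\prod_i\|f_i\|_{L^{p_i}(\sigma_i)}^p$.

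On this complement I would split each input as $f_i\sigma_i=f_i\sigma_i\chi_{\Omega_\alpha^c}+\sum_j f_i\sigma_i\chi_{Q_j}$ and expand $T$ bilinearly into four families. For the terms whose arguments are (at least partially) supported away from $Q_j^{**}$, I would invoke Lemma~\ref{lm:l10} to compare $T$'s value at $x$ with its value at a representative point in some $Q_j^*$, incurring an error bounded by $C\mathcal{M}(f_1\sigma_1,f_2\sigma_2)$; chaining these comparisons through the Whitney hierarchy and using that $\mathcal{M}\le c_0\alpha$ off $\Omega_\alpha$, all such errors sum to at most a multiple of $c_0\alpha$, which is strictly below $\alpha/4$ once $c_0$ is chosen small and thus contributes nothing to $E$. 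The remaining genuinely local pieces $T(f_1\sigma_1\chi_{Q_j^{**}},f_2\sigma_2\chi_{Q_j^{**}})$ are then handled by Chebyshev in $v_{\vec w}$ combined with the testing hypothesis (2) applied on $Q_j^{**}$, and summed over $j$ by H\"older's inequality together with $\sum_j v_{\vec w}(Q_j^{**})\lesssim v_{\vec w}(\Omega_\alpha)$.

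\textbf{Main obstacle.} The most delicate point is the multilinear bookkeeping: organizing the four families in the bilinear expansion so that Lemma~\ref{lm:l10} cleanly absorbs the far-away tails into a constant multiple of $\mathcal{M}$ (which in turn is controlled by $c_0\alpha$ off $\Omega_\alpha$), while the testing condition~(2) handles the truly local contributions, without losing any polynomial factors in the final scaling. Once the splitting is set up, the summation via Chebyshev, H\"older's inequality in $j$, and the weak-type bound of $\mathcal{M}$ from Lemma~\ref{lm:mweak} is routine.
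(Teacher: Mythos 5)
Your direction $(1)\Rightarrow(2)$ is correct and is essentially the paper's layer-cake computation, repackaged as Lorentz duality; both produce the factor $p'\,v_{\vec{w}}(Q)^{1/p'}$.

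Your plan for $(2)\Rightarrow(1)$ misses the key structural feature of the good-$\lambda$ argument. The paper applies the Whitney decomposition to an arbitrary open set $\Omega\supset\{|T(f_1\sigma_1,f_2\sigma_2)|>\lambda\}$; because $\Omega^c\subset\{|T|\le\lambda\}$, one may choose a reference point $y_j\in Q_j^*\cap\Omega^c$ where $|T(f_1\sigma_1,f_2\sigma_2)(y_j)|\le\lambda$, and Lemma~\ref{lm:l10} --- which is only a \emph{difference} estimate, comparing $T$ at two points $x,y$ near the same cube --- then controls $|T(\text{far pieces})(x)|$ by $\lambda+C\mathcal{M}(z_j)$, reducing matters to the local pieces on $\gamma Q_j$, to which the testing hypothesis applies. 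You instead decompose $\Omega_\alpha=\{\mathcal{M}>c_0\alpha\}$, whose complement gives no a priori control on $|T|$: the bound $\mathcal{M}(x)\le c_0\alpha$ does \emph{not} imply $|T(f_1\sigma_1,f_2\sigma_2)(x)|\lesssim\alpha$, since $T$ admits no pointwise domination by $\mathcal{M}$. Thus there is no anchor for Lemma~\ref{lm:l10} (which, moreover, requires $x$ to lie near the cube in question, not in the residual set $(\bigcup_j Q_j^{**})^c$), and your residual set $E$ cannot be shown small this way. In addition, the split $f_i\sigma_i=f_i\sigma_i\chi_{\Omega_\alpha^c}+\sum_j f_i\sigma_i\chi_{Q_j}$ has no cancellation in the bad pieces and no usable bound for the good-good term $T(f_1\sigma_1\chi_{\Omega_\alpha^c},f_2\sigma_2\chi_{\Omega_\alpha^c})$ --- the testing hypothesis $(2)$ is purely local and says nothing about it. As a secondary point, passing to $\bigcup_j Q_j^{**}$ via the doubling of $v_{\vec{w}}$ injects a power of $[v_{\vec{w}}]_{A_{mp}}$ into the constant; the paper avoids this by applying the testing condition only on the cubes $\gamma Q_j$ with $1<\gamma<5/4$, which have bounded overlap, and this quantitative economy is what feeds the clean estimate (\ref{eq:e13}) needed in the proof of Theorem~\ref{thm:m1}.
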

\begin{proof}
(i)$\Rightarrow$ (ii):
By the weak type boundness of $T$, we have
\begin{eqnarray*}
&&\int_Q |T(f_1\sigma_1\chi^{}_Q,f_2\sigma_2\chi^{}_{Q})(x)|v_{\vec{w}}(x)dx\\
&=&\int_0^\infty v_{\vec{w}}\{x\in Q: |T(f_1\sigma_1\chi^{}_Q, f_2\sigma_2\chi^{}_{Q})(x)|>\lambda\}d\lambda\\
&\le& \int_0^\infty \min\{v_{\vec{w}}(Q), \lambda^{-p}\|T(f_1\sigma_1\chi^{}_Q,f_2\sigma_2\chi^{}_{Q})\|_{L^{p,\infty}(v_{\vec{w}})}^p\}d\lambda\\
&=&p'\|T(f_1\sigma_1\chi^{}_Q,f_2\sigma_2\chi^{}_{Q})\|_{L^{p,\infty}(v_{\vec{w}})}v_{\vec{w}}(Q)^{1/{p'}}\\
&\le&p'C\prod_{i=1}^2\|f_i\|_{L^{p_i}(\sigma_i)}v_{\vec{w}}(Q)^{1/{p'}}.
\end{eqnarray*}

(ii)$\Rightarrow$(i):
Let $\Omega$ be an open set containing $\{x: |T(f_1\sigma_1, f_2\sigma_2)(x)|>\lambda\}$. Form the Whitney decomposition to $\Omega$, we get Whitney cubes $Q_j$. Set
$Q_j^*=10\sqrt{n}Q_j$ and $Q_j^{**}=10\sqrt{n}Q_j^*$. Let
$\gamma$ be defined as that in Proposition~\ref{prop:whitney}.
In the following, we prove that
\begin{eqnarray}
&&v_{\vec{w}}\{x\in \bbR^n: |T(f_1\sigma_1,f_2\sigma_2)(x)|>2\lambda, \mathcal{M}(f_1\sigma_1,f_2\sigma_2)(x)\le \beta \lambda\}\nonumber \\
&\lesssim& \beta v_{\vec{w}}(\Omega)+\mathcal{T}_*^p\beta^{-p}\lambda^{-p}
\prod_{i=1}^m\|f_i\|_{L^{p_i}(\sigma_i)}^p,\label{eq:e1}
\end{eqnarray}
where
\[
  \mathcal{T}_*=\sup_{\|f_i\|_{L^{p_i}(\sigma_i)}\le1\atop i=1,2}\sup_{Q}v_{\vec{w}}(Q)^{-1/{p'}}\int_Q |T(f_1\sigma_1\chi^{}_Q, f_2\sigma_2\chi^{}_{Q})(x)|v_{\vec{w}}(x)dx.
\]

By Whitney's decomposition, we only need to estimate
\begin{equation}\label{eq:q}
v_{\vec{w}}\{x\in Q_j: |T(f_1\sigma_1, f_2\sigma_2)(x)|>2\lambda, \mathcal{M}(f_1\sigma_1, f_2\sigma_2)(x)\le \beta \lambda\}.
\end{equation}
Assume that there exists some $z_j\in Q_j$ such that $\mathcal{M}(f_1\sigma_1, f_2\sigma_2)(z_j)\le \beta \lambda$.
Otherwise, $(\ref{eq:q})$ is zero. By
the property of Whitney decomposition, we can also choose some
 $y_j\in Q_j^*$ such that $y_j\in \Omega^c$. Since $\{x: |T(f_1\sigma_1, f_2\sigma_2)(x)|>\lambda\}\subset \Omega$,
we have $|T(f_1\sigma_1, f_2\sigma_2)(y_j)|\le\lambda$.

For any $f_i$, $i=1,2$, denote $f_i^0=f_i\chi^{}_{\gamma Q_j}$
and  $f_i^\infty=f_i\chi^{}_{(\gamma Q_j)^c}$.
We consider every $f_i^{\alpha_i}$ separately, where $\alpha_i=0$ or $\infty$.

Similarly as that in \cite{LOPTT}, we consider first
the case $\alpha_1= \alpha_2=\infty$.
For $x\in Q_j$, we have
\begin{eqnarray*}
&&|T(f_1^\infty\sigma_1, f_2^\infty\sigma_2)(x)-T(f_1^\infty\sigma_1, f_2^\infty\sigma_2)(y_j)|\\
&\le&|T(f_1\sigma_1\chi^{}_{(Q_j^{**})^c},f_2^\infty\sigma_2)(x)-
T(f_1\sigma_1\chi^{}_{(Q_j^{**})^c},f_2^\infty\sigma_2)(y_j)|\\
&&\quad+|T(f_1\sigma_1\chi^{}_{Q_j^{**}\setminus \gamma Q_j},f_2^\infty\sigma_2)(x)-
T(f_1\sigma_1\chi^{}_{Q_j^{**}\setminus \gamma Q_j},f_2^\infty\sigma_2)(y_j)|\\
&\le&|T(f_1\sigma_1\chi^{}_{(Q_j^{**})^c},f_2^\infty\sigma_2)(x)-
T(f_1\sigma_1\chi^{}_{(Q_j^{**})^c},f_2^\infty\sigma_2)(y_j)|\\
&&\! +|T(f_1\sigma_1\chi^{}_{Q_j^{**}\setminus \gamma Q_j},f_2\sigma_2\chi^{}_{(Q_j^{**})^c}\!)(x)\!-\!
T(f_1\sigma_1\chi^{}_{Q_j^{**}\setminus \gamma Q_j}, f_2\sigma_2\chi^{}_{(Q_j^{**})^c})(y_j)|
\\
&&\quad+
|T(f_1\sigma_1\chi_{Q_j^{**}\setminus \gamma Q_j}, f_2\sigma_2\chi_{Q_j^{**}\setminus \gamma Q_j})(x)|\\
&&\quad+|T(f_1\sigma_1\chi^{}_{Q_j^{**}\setminus \gamma Q_j}, f_2\sigma_2\chi^{}_{Q_j^{**}\setminus \gamma Q_j})(y_j)|\\
&\le&C_1\mathcal{M}(f_1\sigma_1, f_2\sigma_2)(z_j)+|T(f_1\sigma_1\chi^{}_{Q_j^{**}\setminus \gamma Q_j}, f_2\sigma_2\chi^{}_{Q_j^{**}\setminus \gamma Q_j})(y_j)|.
\end{eqnarray*}

Next, suppose that $\alpha_{i}=\infty$ for $i=1$ or $i=2$. Without loss of generality, assume that $i=1$.
  We have
\begin{eqnarray*}
&&|T(f_1^{\infty}\sigma_1, f_2^0\sigma_2)(x)-T(f_1^{\infty}\sigma_1,f_2^0\sigma_2)(y_j)|\\
&\le&|T(f_{1}\sigma_{1}\chi^{}_{(Q_j^{**})^c}, f_{2}^0\sigma_{2} )(x)
-T( f_{1}\sigma_{1}\chi^{}_{(Q_j^{**})^c},f_{2}^0\sigma_{2})(y_j)|\\
&&\quad+
|T( f_{1}\sigma_{1}\chi_{Q_j^{**}\setminus \gamma Q_j},f_{2}^0\sigma_{2})(x)|
+|T( f_{1}\sigma_{1}\chi_{Q_j^{**}\setminus \gamma Q_j},f_{2}^0\sigma_{2})(y_j)|\\
&\le&C_2\mathcal{M}(f_1\sigma_1, f_2\sigma_2)(z_j)
+|T( f_{1}\sigma_{1}\chi_{Q_j^{**}\setminus \gamma Q_j},f_{2}^0\sigma_{2} )(y_j)|.
\end{eqnarray*}
Hence
\begin{eqnarray*}
&&|T(f_1\sigma_1 ,f_2\sigma_2)(x)-T(f_1^0\sigma_1, f_2^0\sigma_2)(x)-T(f_1\sigma_1,f_2\sigma_2)(y_j)|\\
&\le&C_3\mathcal{M}(f_1\sigma_1, f_2\sigma_2)(z_j)+|T(f_1^0\sigma_1,f_2^0\sigma_2)(y_j)|\\
&&\quad+|T(f_1\sigma_1\chi^{}_{Q_j^{**}\setminus \gamma Q_j}, f_2\sigma_2\chi^{}_{Q_j^{**}\setminus \gamma Q_j})(y_j)|\\
&&\quad+|T( f_{1}\sigma_{1}\chi_{Q_j^{**}\setminus \gamma Q_j},f_{2}^0\sigma_{2} )(y_j)|
+|T( f_{1}^0\sigma_{1},f_{2}\sigma_{2}\chi_{Q_j^{**}\setminus \gamma Q_j} )(y_j)|.
\end{eqnarray*}
Consequently, for any $0<\delta<1/2$,
\begin{eqnarray*}
&&|T(f_1\sigma_1 ,f_2\sigma_2)(x)-T(f_1^0\sigma_1, f_2^0\sigma_2)(x)-T(f_1\sigma_1,f_2\sigma_2)(y_j)|^\delta\\
&&\quad\le(C_3\beta)^\delta\lambda^\delta+\Sigma'^{\delta},
\end{eqnarray*}
where
\begin{eqnarray*}
\Sigma'&=&|T(f_1^0\sigma_1,f_2^0\sigma_2)(y_j)|+|T(f_1\sigma_1\chi^{}_{Q_j^{**}\setminus \gamma Q_j}, f_2\sigma_2\chi^{}_{Q_j^{**}\setminus \gamma Q_j})(y_j)|\\
&&\quad+|T( f_{1}\sigma_{1}\chi_{Q_j^{**}\setminus \gamma Q_j},f_{2}^0\sigma_{2} )(y_j)|
+|T( f_{1}^0\sigma_{1},f_{2}\sigma_{2}\chi_{Q_j^{**}\setminus \gamma Q_j} )(y_j)|.
\end{eqnarray*}
Note that $|Q_j^*\bigcap \Omega^c|$ is comparable with $|Q_j^{**}|$ due to the property of
the Whitney decomposition.
Integrating over $y_j\in Q_j^*\bigcap \Omega^c$, we have
\begin{eqnarray*}
&&\frac{1}{|Q_j^*\bigcap \Omega^c|}\int_{Q_j^*\bigcap \Omega^c}\Big|T(f_1\sigma_1 ,f_2\sigma_2)(x)-T(f_1^0\sigma_1, f_2^0\sigma_2)(x) \\
&& \hskip 9em -T(f_1\sigma_1,f_2\sigma_2)(y_j)\Big|^\delta dy_j\\
&\le&(C_3\beta\lambda)^\delta+\frac{1}{|Q_j^*\bigcap \Omega^c|}\int_{Q_j^*\bigcap \Omega^c}\Sigma'^\delta dy_j\\
&\le&(C_3\beta\lambda)^\delta+C_4\mathcal{M}(f_1\sigma_1,f_2\sigma_2)(z_j)^\delta,
\end{eqnarray*}
where Kolmogorov's inequality and the $L^1\times L^1\rightarrow L^{1/2,\infty}$ boundedness of $T$ are used, see \cite[p. 1239]{LOPTT}.
Since
\begin{eqnarray*}
&&|T(f_1\sigma_1 ,f_2\sigma_2)(x)-T(f_1^0\sigma_1, f_2^0\sigma_2)(x)-T(f_1\sigma_1,f_2\sigma_2)(y_j)|^\delta\\
&\ge& |T(f_1\sigma_1, f_2\sigma_2)(x)-T(f_1^0\sigma_1, f_2^0\sigma_2)(x)|^\delta-|T(f_1\sigma_1, f_2\sigma_2)(y_j)|^\delta\\
&\ge& |T(f_1\sigma_1, f_2\sigma_2)(x)-T(f_1^0\sigma_1, f_2^0\sigma_2)(x)|^\delta-\lambda^\delta,
\end{eqnarray*}
we have
\[
 |T(f_1\sigma_1, f_2\sigma_2)(x)-T(f_1^0\sigma_1, f_2^0\sigma_2)(x)|\le (1+C_5\beta)\lambda.
\]
It follows that  for $\beta\le (2C_5)^{-1}$,
\[
   |T(f_1\sigma_1, f_2\sigma_2)(x)-T(f_1^0\sigma_1, f_2^0\sigma_2)(x)|\le (1+C_5\beta)\lambda\le 3\lambda/2.
\]
Denote
\[
  E_j=\{x\in Q_j: |T(f_1\sigma_1, f_2\sigma_2)(x)|>2\lambda; \mathcal{M}(f_1\sigma_1 ,f_2\sigma_2)(x)\le \beta\lambda\}.
\]
Then we have
\[
  E_j\subset \{x\in Q_j: |T(f_1^0\sigma_1 ,f_2^0\sigma_2)(x)|>\lambda/2\}.
\]
Therefore,
\begin{eqnarray*}
\sum_j v_{\vec{w}}(E_j)
 \hskip -0.5em&\le& \beta^{-p} \hskip -0.3em
  \hskip -1em \sum_{j: v_{\vec{w}}(E_j)> \beta v_{\vec{w}}(\gamma Q_j)}\hskip -1em
 v_{\vec{w}}(E_j)\bigg( \frac{2}{\lambda v_{\vec{w}}(\gamma Q_j)}\int_{E_j}|T(f_1^0\sigma_1 ,f_2^0\sigma_2)|v_{\vec{w}}\bigg)^p\\
 &&\quad+ \beta\sum_{j: v_{\vec{w}}(E_j)\le \beta v_{\vec{w}}(\gamma Q_j)}v_{\vec{w}}(\gamma Q_j)\\
 &:=&I+II.
\end{eqnarray*}
By H\"older's inequality,  we have
\begin{eqnarray*}
I&\le& (\frac{2}{\beta\lambda})^p\mathcal{T}_*^p\sum_j\prod_{i=1}^2\left(\int_{\gamma Q_j} |f_i(y_i)|^{p_i}\sigma_i dy_i\right)^{p/{p_i}}\\
&\le&(\frac{2}{\beta\lambda})^p\mathcal{T}_*^p\prod_{i=1}^2\left(\int_{\bbR^n} (\sum_j\chi^{}_{\gamma Q_j})|f_i(y_i)|^{p_i}\sigma_i dy_i\right)^{p/{p_i}}\\
&\lesssim&\beta^{-p}\lambda^{-p}\mathcal{T}_*^p\prod_{i=1}^2\|f_i\|_{L^{p_i}(\sigma_i)}^p.
\end{eqnarray*}
On the other hand,
by the property of Whitney's decomposition,
\[
  II\le \beta \sum_j v_{\vec{w}}(\gamma Q_j)\le C_n\beta v_{\vec{w}}(\Omega).
\]
This proves $(\ref{eq:e1})$.
Taking the infimum over $\Omega$, we have
\begin{eqnarray*}
&&v_{\vec{w}}\{x\in \bbR^n: |T(f_1\sigma_1, f_2\sigma_2)(x)|>2\lambda; \mathcal{M}(f_1\sigma_1 ,f_2\sigma_2)(x)\le \beta \lambda\}\\
&\lesssim& \beta v_{\vec{w}}\{x\in \bbR^n: |T(f_1\sigma_1,f_2\sigma_2)(x)|>\lambda\}+\mathcal{T}_*^p\beta^{-p}\lambda^{-p}
\prod_{i=1}^2\|f_i\|_{L^{p_i}(\sigma_i)}^p.
\end{eqnarray*}
It follows that
\begin{eqnarray*}
&&\|T(f_1\sigma_1, f_2\sigma_2)\|_{L^{p,\infty}(v_{\vec{w}})}^p\\
&=&\sup_{\lambda>0}(2\lambda)^p v_{\vec{w}}\{|T(f_1\sigma_1 ,f_2\sigma_2)(x)|>2\lambda\}\\
&\le&\sup_{\lambda>0}(2\lambda)^p v_{\vec{w}}\{|T(f_1\sigma_1 ,f_2\sigma_2)(x)|>2\lambda; \mathcal{M}(f_1\sigma_1 ,f_2\sigma_2)(x)\le \beta \lambda\}\\
&&\quad+\sup_{\lambda>0}(2\lambda)^p v_{\vec{w}}\{\mathcal{M}(f_1\sigma_1, f_2\sigma_2)(x)> \beta \lambda\}\\
&\le&\sup_{\lambda>0}2^p\beta C_6 \lambda^pv_{\vec{w}}\{x\in \bbR^n: |T(f_1\sigma_1, f_2\sigma_2)(x)|>\lambda\}\\
&&\quad+(2^{p}\|\mathcal{M}\|_{L^{p_1}(w_1)\times  L^{p_2}(w_2)\rightarrow L^{p,\infty}(v_{\vec{w}})}^p+C_6\mathcal{T}_*^p)
\beta^{-p}\prod_{i=1}^2\|f_i\|_{L^{p_i}(\sigma_i)}^p\\
&=&2^p\beta C_6\|T(f_1\sigma_1, f_2\sigma_2)\|_{L^{p,\infty}(v_{\vec{w}})}^p\\
&&\quad+(2^{p}\|\mathcal{M}\|_{L^{p_1}(w_1)\times  L^{p_2}(w_2)\rightarrow L^{p,\infty}(v_{\vec{w}})}^p+C_6\mathcal{T}_*^p)
\beta^{-p}\prod_{i=1}^2\|f_i\|_{L^{p_i}(\sigma_i)}^p.
\end{eqnarray*}
Let $\beta=\min\{ (2C_5)^{-1}, (2^{p+1}C_6)^{-1} \}$. By Lemma~\ref{lm:mweak}, we get
\begin{equation}\label{eq:e13}
\|T(f_1\sigma_1,f_2\sigma_2)\|_{L^{p,\infty}(v_{\vec{w}})}\lesssim
(\mathcal{T}_*+[\vec{w}]_{A_{\vec{P}}}^{1/p})\prod_{i=1}^2\|f_i\|_{L^{p_i}(\sigma_i)}.
\end{equation}
\end{proof}

The following is a characterization of the weak boundedness of
$A_{\mathscr{D},\mathcal{S}}$.

\begin{Lemma}\label{lm:la}
Let $1<p,p_1,p_2<\infty$ and $\vec{w}\in A_{\vec{P}}$, where $\vec{w}:=(w_1,  w_2)$ and $\vec{P}:=(p_1,  p_2)$ with $1/p=1/{p_1}+1/{p_2}$.
Suppose that $\mathscr{D}$ is a dyadic grid and $\mathcal{S}$ is a sparse family in $\mathscr{D}$.
 Then the following assertions are equivalent.
\begin{enumerate}
\item $\|A_{\mathscr{D},\mathcal{S}}(|f_1|\sigma_1,|f_2|\sigma_2)\|_{L^{p,\infty}(v_{\vec{w}})}\le C\prod_{i=1}^2\|f_i\|_{L^{p_i}(\sigma_i)}$;
\item $\int_Q A_{\mathscr{D},\mathcal{S}}(|f_1|\sigma_1\chi^{}_Q,|f_2|\sigma_2\chi^{}_{Q})(x)v_{\vec{w}}(x)dx\le C\prod_{i=1}^2\|f_i \|_{L^{p_i}(\sigma_i)}v_{\vec{w}}(Q)^{1/{p'}}$ for all  cubes $Q\subset\bbR^n $
and all functions $f_i\in L^{p_i}(\sigma_i)$, $i=1,2$;
\item $\int_Q A_{\mathscr{D},\mathcal{S}}(|f_1|\sigma_1\chi^{}_Q,|f_2|\sigma_2\chi^{}_{Q})(x)v_{\vec{w}}(x)dx\le C\prod_{i=1}^2\|f_i \|_{L^{p_i}(\sigma_i)}v_{\vec{w}}(Q)^{1/{p'}}$ for all dyadic cubes $Q\in \mathcal{S}$
and all functions $f_i\in L^{p_i}(\sigma_i)$, $i=1,2$.
\end{enumerate}
\end{Lemma}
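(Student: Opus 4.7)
The plan is to establish $(i)\Rightarrow(ii)$, $(ii)\Leftrightarrow(iii)$, and $(ii)\Rightarrow(i)$, with the real work going into the last two. For $(i)\Rightarrow(ii)$ I will use the Kolmogorov integration from the opening of the proof of Lemma~\ref{lm:weak}: write the integral $\int_Q A_{\mathscr{D},\mathcal{S}}(|f_1|\sigma_1\chi_Q,|f_2|\sigma_2\chi_Q)v_{\vec{w}}\,dx$ as the integral over $t>0$ of $v_{\vec{w}}\{x\in Q: A_{\mathscr{D},\mathcal{S}}(|f_1|\sigma_1\chi_Q,|f_2|\sigma_2\chi_Q)(x)>t\}$, bound the distribution function by $\min\{v_{\vec{w}}(Q),t^{-p}C^p\prod\|f_i\chi_Q\|_{L^{p_i}(\sigma_i)}^p\}$, and integrate to produce a constant multiple of $\prod\|f_i\|_{L^{p_i}(\sigma_i)}v_{\vec{w}}(Q)^{1/p'}$. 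The implication $(ii)\Rightarrow(iii)$ is immediate since $\mathcal{S}$ is a subfamily of all cubes.

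For $(iii)\Rightarrow(ii)$, I fix a dyadic cube $Q$ (a general cube is covered by $O(1)$ dyadic cubes of comparable side-length) and split the testing integral into a ``big'' part over $R\in\mathcal{S}$ with $R\supsetneq Q$ and a ``small'' part over $R\in\mathcal{S}$ with $R\subseteq Q$. The geometric bound $\sum_{R\in\mathscr{D},\,R\supsetneq Q}|R|^{-m}\lesssim |Q|^{-m}$ collapses the big part to $\lesssim v_{\vec{w}}(Q)|Q|^{-m}\prod_i\int_Q|f_i|\sigma_i$, which H\"older's inequality combined with the $A_{\vec{P}}$ condition bound by $C[\vec{w}]_{A_{\vec{P}}}^{1/p}\prod\|f_i\|_{L^{p_i}(\sigma_i)}v_{\vec{w}}(Q)^{1/p'}$. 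For the small part, I take the pairwise disjoint maximal sparse cubes $\{R_k\}\subseteq Q$; inside each $R_k$ the hypothesis $(iii)$ applied to $R_k$ controls the contribution of $R\in\mathcal{S}$ with $R\subseteq R_k$ (the larger $R$'s contribute nonnegatively), and summing via H\"older's inequality with exponents $(p_1,p_2,p')$ (whose reciprocals sum to one) and $\sum_k v_{\vec{w}}(R_k)\le v_{\vec{w}}(Q)$ yields $(ii)$.

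The substantive direction is $(ii)\Rightarrow(i)$. Fix $\lambda>0$ and let $\{Q_j\}$ be the maximal dyadic cubes in $\Omega_\lambda:=\{A_{\mathscr{D},\mathcal{S}}(f_1\sigma_1,f_2\sigma_2)>\lambda\}$. On each $Q_j$ decompose $A_{\mathscr{D},\mathcal{S}}(f_1\sigma_1,f_2\sigma_2)(x)=A_j^{\mathrm{in}}(x)+A_j^{\mathrm{out}}$, where $A_j^{\mathrm{out}}:=\sum_{R\in\mathcal{S},\,R\supsetneq Q_j}|R|^{-m}\prod_i\int_R|f_i|\sigma_i$ is constant on $Q_j$. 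Maximality supplies $y\in\widehat{Q_j}\setminus\Omega_\lambda$; since every $R\in\mathcal{S}$ with $R\supsetneq Q_j$ contains the dyadic parent $\widehat{Q_j}\ni y$, we obtain $A_j^{\mathrm{out}}\le A_{\mathscr{D},\mathcal{S}}(f_1\sigma_1,f_2\sigma_2)(y)\le\lambda$. Hence on $Q_j\cap\Omega_{2\lambda}$ we have $A_j^{\mathrm{in}}>\lambda$, forcing $A_{\mathscr{D},\mathcal{S}}(f_1\sigma_1\chi_{Q_j},f_2\sigma_2\chi_{Q_j})>\lambda$ there; Chebyshev's inequality combined with $(ii)$ yields $v_{\vec{w}}(Q_j\cap\Omega_{2\lambda})\le C\lambda^{-1}\prod_i\|f_i\chi_{Q_j}\|_{L^{p_i}(\sigma_i)}v_{\vec{w}}(Q_j)^{1/p'}$. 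Summing in $j$ via H\"older's inequality with exponents $(p_1,p_2,p')$ and disjointness gives $v_{\vec{w}}(\Omega_{2\lambda})\le C\lambda^{-1}\prod_i\|f_i\|_{L^{p_i}(\sigma_i)}v_{\vec{w}}(\Omega_\lambda)^{1/p'}$; multiplying by $(2\lambda)^p$ and taking the supremum over $\lambda$ produces the self-referential inequality $N\le 2^pC\prod_i\|f_i\|_{L^{p_i}(\sigma_i)}N^{1/p'}$ for $N:=\|A_{\mathscr{D},\mathcal{S}}(f_1\sigma_1,f_2\sigma_2)\|_{L^{p,\infty}(v_{\vec{w}})}^p$, whose absorption yields $(i)$. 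The main obstacle will be closing this absorption rigorously, i.e.\ guaranteeing $N<\infty$ a priori; this will be handled by the standard truncation that restricts $\mathcal{S}$ to $\{R\in\mathcal{S}:R\subseteq\overline{Q}\}$ for a large dyadic ancestor $\overline{Q}$ (making $A_{\mathscr{D},\mathcal{S}}$ a finite sum), obtaining a bound uniform in $\overline{Q}$, and then letting $\overline{Q}\nearrow\bbR^n$ via monotone convergence.
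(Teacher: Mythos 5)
Your proposal is correct, and its overall architecture (maximal dyadic cubes of the level set $\Omega_\lambda$, absorbing a self-referential weak-type inequality, Kolmogorov's layer-cake formula for $(i)\Rightarrow(ii)$) matches the paper's. The one genuine difference is worth noting: the paper proves $(iii)\Rightarrow(i)$ directly by applying the testing condition $(iii)$ to the maximal dyadic cubes $P_\zeta$ of $\Omega_t$ and tacitly asserting that every $P_\zeta$ lies in $\mathcal{S}$ (``by the structure of $A_{\mathscr{D},\mathcal{S}}$''). That assertion is actually false in general --- e.g.\ in dimension one take $\mathcal{S}$ to be two dyadic siblings $R_1,R_2$ with common parent $P$; then for small $t$ the maximal cube of $\Omega_t$ is $P\notin\mathcal{S}$. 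Your route, which first upgrades $(iii)$ to the testing condition on \emph{all dyadic} cubes via the big/small decomposition (big cubes handled by the geometric sum $\sum_{R\supsetneq Q}|R|^{-2}\lesssim |Q|^{-2}$ together with the $A_{\vec P}$ condition, small cubes handled by the maximal $\mathcal{S}$-cubes inside $Q$ plus H\"older with exponents $(p_1,p_2,p')$), and only then applies the stopping-time/Chebyshev/absorption argument, actually repairs this gap. Your absorption step $N\le 2^pC\prod_i\|f_i\|_{L^{p_i}(\sigma_i)}N^{1/p'}$ is also a bit cleaner than the paper's two-case $\beta$-split over $\{v_{\vec{w}}(E_\zeta)>\beta v_{\vec{w}}(P_\zeta)\}$ and its complement, and you rightly flag the a priori finiteness of $N$, which the paper glosses over. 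Two minor cleanups: (a) the parenthetical ``a general cube is covered by $O(1)$ dyadic cubes of comparable side-length'' is neither needed nor quite sufficient as stated --- after splitting $\chi_Q\le\sum_\alpha\chi_{D_\alpha}$ the testing integral produces cross terms $\int_{D_\gamma}A_{\mathscr{D},\mathcal{S}}(|f_1|\sigma_1\chi_{D_\alpha},|f_2|\sigma_2\chi_{D_\beta})v_{\vec{w}}$ with $\alpha,\beta,\gamma$ not all equal, and one would also need the doubling of $v_{\vec{w}}$ to convert $v_{\vec{w}}(D_\alpha)^{1/p'}$ into $v_{\vec{w}}(Q)^{1/p'}$; simply delete this remark and obtain $(ii)$ for all cubes from the already-proved chain $(iii)\Rightarrow(ii_{\text{dyadic}})\Rightarrow(i)\Rightarrow(ii)$; (b) since $(ii)$ is only invoked on the dyadic cubes $Q_j$ in the $(ii)\Rightarrow(i)$ step, your chain is logically complete as soon as you have $(ii)$ for dyadic cubes, so the proof would read more tightly if you framed that step as proving $(iii)\Rightarrow(ii_{\text{dyadic}})$.
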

\begin{proof}
(i)$\Rightarrow$ (ii) is similar to the proof of Lemma~\ref{lm:weak}. (ii)$\Rightarrow$(iii) is obvious.
We only need to prove (iii)$\Rightarrow$ (i).

For any $t>0$, denote $\Omega_t:=\{x\in\bbR^n: A_{\mathscr{D},\mathcal{S}}(|f_1|\sigma_1,|f_2|\sigma_2)(x)>t\}:=\bigcup_{\zeta} P_\zeta$,
where $P_\zeta$ are pairwise disjoint maximal cubes in $\Omega_t$. We have
\[
  \sum_{Q_{j,k}\supset P_\zeta} \prod_{i=1}^2 \frac{1}{|Q_{j,k}|}\int_{Q_{j,k}}|f_i(y_i)|\sigma_idy_i >t,
\]
and
\[
  \sum_{Q_{j,k}\supsetneq P_\zeta} \prod_{i=1}^2 \frac{1}{|Q_{j,k}|}\int_{Q_{j,k}}|f_i(y_i)|\sigma_i dy_i \le t.
\]
Therefore, for $x\in E_\zeta=P_\zeta\cap \Omega_{2t}$, we have
\[
  2t<A^{}_{\mathscr{D},\mathcal{S}}(|f_1|\sigma_1 , |f_2|\sigma_2 )(x) \le A^{}_{\mathscr{D},\mathcal{S}}(|f_1|\sigma_1 1_{P_\zeta}, |f_2|\sigma_2 1_{P_\zeta})(x)+t.
\]
That is,
\[
  A^{}_{\mathscr{D},\mathcal{S}}(|f_1|\sigma_1 1_{P_\zeta}, |f_2|\sigma_2 1_{P_\zeta})(x)>t,\quad\mbox{$x\in E_\zeta$}.
\]
It follows that
\begin{eqnarray*}
&&(2t)^{p}v_{\vec{w}}(\Omega_{2t})\\
&=& (2t)^p\sum_\zeta v_{\vec{w}}(E_\zeta)\\
&\le& 2^p\!\! \sum_{v_{\vec{w}}(E_\zeta) > \beta v_{\vec{w}}(P_\zeta)}\!\!\!
  v_{\vec{w}}(E_\zeta)
\bigg(\frac{1}{v_{\vec{w}}(E_\zeta)}\int_{E_\zeta} A^{}_{\mathscr{D},\mathcal{S}}(|f_1|\sigma_1 1_{P_\zeta}, |f_2|\sigma_2 1_{P_\zeta})(x)dx\bigg)^p\\
&&\quad +(2t)^p\sum_{v_{\vec{w}}(E_\zeta) \le \beta v_{\vec{w}}(P_\zeta)}\beta v_{\vec{w}}(P_\zeta)\\
&\le&  2^p \beta^{1-p}\sum_{\zeta}v_{\vec{w}}(P_\zeta)^{1-p}
\bigg(\int_{P_\zeta} A^{}_{\mathscr{D},\mathcal{S}}(|f_1|\sigma_1 1_{P_\zeta}, |f_2|\sigma_2 1_{P_\zeta})(x)dx\bigg)^p\\
&&\quad +2^p\beta \|A^{}_{\mathscr{D},\mathcal{S}}(|f_1|\sigma_1 , |f_2|\sigma_2 )\|_{L^{p,\infty}(v_{\vec{w}})}^p\\
&\le&C^p 2^p \beta^{1-p}\sum_\zeta\bigg(\int_{P_\zeta} |f_1|^{p_1}\sigma_1\bigg)^{p/{p_1}}\cdot\bigg(\int_{P_\zeta} |f_2|^{p_2}\sigma_2\bigg)^{p/{p_2}}\\
&&\quad+2^p\beta \|A^{}_{\mathscr{D},\mathcal{S}}(|f_1|\sigma_1 , |f_2|\sigma_2 )\|_{L^{p,\infty}(v_{\vec{w}})}^p\\
&\le&C^p 2^p \beta^{1-p}\|f_1\|_{L^{p_1}(\sigma_1)}^p\|f_2\|_{L^{p_2}(\sigma_2)}^p+2^p\beta \|A^{}_{\mathscr{D},\mathcal{S}}(|f_1|\sigma_1 , |f_2|\sigma_2 )\|_{L^{p,\infty}(v_{\vec{w}})}^p.
\end{eqnarray*}
By setting $\beta=2^{-p-1}$ and taking
the supremum of $t$, we get the conclusion desired.
\end{proof}

Now we are ready to prove Theorem~\ref{thm:m1}.

\begin{proof}[Proof of Theorem~\ref{thm:m1}]
By setting the Banach space $\mathcal{X}$ to be $L^1_{v_{\vec{w}}}(Q)$, we see from
 (\ref{eq:x}) that
\begin{eqnarray*}
&&\int_Q |T(f_1\sigma_1\chi^{}_Q,f_2\sigma_2\chi^{}_{Q})(x)|v_{\vec{w}}(x)dx\\
&&\quad\lesssim
\sup_{\mathscr{D},\mathcal{S}}\int_Q A_{\mathscr{D},\mathcal{S}}(|f_1|\sigma_1\chi^{}_Q,|f_2|\sigma_2\chi^{}_{Q})(x)v_{\vec{w}}(x)dx.
\end{eqnarray*}
Hence
\begin{eqnarray*}
&&v_{\vec{w}}(Q)^{-1/{p'}}\int_Q |T(f_1\sigma_1\chi^{}_Q,f_2\sigma_2\chi^{}_{Q})(x)|v_{\vec{w}}(x)dx\\
&&\quad\lesssim
\sup_{\mathscr{D},\mathcal{S}}
v_{\vec{w}}(Q)^{-1/{p'}}\int_Q A_{\mathscr{D},\mathcal{S}}(|f_1|\sigma_1\chi^{}_Q,|f_2|\sigma_2\chi^{}_{Q})(x)v_{\vec{w}}(x)dx.
\end{eqnarray*}
For fixed $\mathscr{D}, \mathcal{S}$, by Lemma~\ref{lm:la}, it suffices to estimate
\[
 v_{\vec{w}}(Q)^{-1/{p'}}\int_Q A_{\mathscr{D},\mathcal{S}}(|f_1|\sigma_1\chi^{}_Q,|f_2|\sigma_2\chi^{}_{Q})(x)v_{\vec{w}}(x)dx
\]
for dyadic cube $Q\in\mathcal{S}$.
By  Lemma~\ref{lm:l2}, we have
\begin{eqnarray*}
&&v_{\vec{w}}(Q)^{-1/{p'}}\int_Q A_{\mathscr{D},\mathcal{S}}(|f_1|\sigma_1\chi^{}_Q,|f_2|\sigma_2\chi^{}_{Q})(x)v_{\vec{w}}(x)dx\\
&\le&v_{\vec{w}}(Q)^{-1/{p'}}
\int_Q A_{\mathscr{D},\mathcal{S}}(v_{\vec{w}}\chi^{}_Q, |f_2|\sigma_2\chi^{}_{Q})|f_1|\sigma_1 (x)dx\\
&\le& v_{\vec{w}}(Q)^{-1/{p'}}
\bigg(\int_Q (A_{\mathscr{D},\mathcal{S}}( v_{\vec{w}}\chi^{}_Q, |f_2|\sigma_2\chi^{}_{Q}))^{p_1'}\sigma_1 dx\bigg)^{1/{p_1'}}\\
&&\quad\cdot
\bigg(\int_Q |f_1|^{p_1}\sigma_1 (x)dx\bigg)^{1/{p_1}} \\
&\lesssim& [\vec{w}]_{A_{\vec{P}}}^{1/p}[v_{\vec{w}}]_{A_\infty}^{1/{p'}}([\sigma_1]_{A_\infty}^{1/{p_1}}+[\sigma_2]_{A_\infty}^{1/{p_2}})
\|f_1\|_{L^{p_1}(\sigma_1)}\|f_2\|_{L^{p_2}(\sigma_2)}.
\end{eqnarray*}
By (\ref{eq:e13}), we get the conclusion desired.
\end{proof}


\begin{thebibliography}{EEE}
\bibitem{B}
S.~Buckley,
Estimates for operator norms on weighted spaces and reverse Jensen inequalities,
Trans. Amer. Math. Soc.,
 340 (1993), 253--272.

\bibitem{Cruz-Uribe}
D.~Cruz-Uribe, J.M.~Martell, C.~P\'{e}rez,
Sharp weighted estimates for classical operators,
Adv. Math.,
 229 (2012), 408--441.

\bibitem{DLP}
W.~Dami\'{a}n, A.K.~Lerner and C.~P\'erez,
Sharp weighted bounds for multilinear maximal functions and Calder\'{o}n-Zygmund operators,
 http://arxiv.org/abs/1211.5115.

\bibitem{DGGLY}
X.T.~Duong, R.~Gong, L.~Grafakos, J.~Li, L.~Yan,
Maximal operator for multilinear singular integrals with non-smooth kernels,
Indiana Univ. Math. J.,
 58 (2009),  2517--2541.

\bibitem{DGY}
X.T.~Duong, L.~Grafakos, and L.~Yan,
Multilinear operators with non-smooth kernels
and commutators of singular integrals,
Trans. Amer. Math. Soc. 362 (2010) 2089--2113.

\bibitem{Gra}
L.~Grafakos,
Modern Fourier Analysis, Second Edition,
 Springer-Verlag, 2008.

\bibitem{GT}
L.~Grafakos, R.H.~Torres,
Multilinear Calder\'{o}n-Zygmund theory,
Adv. Math. 165 (2002) 124--164.

\bibitem{GT2}
L.~Grafakos, R.H.~Torres,
Maximal operator and weighted norm inequalities for multilinear singular integrals,
Indiana Univ. Math. J. 51 (2002) 1261--1276.


\bibitem{Hu-Yang2009}
G.~Hu and D.~Yang.
Maximal commutators of {BMO} functions and singular integral
  operators with non-smooth kernels on spaces of homogeneous type,
J. Math. Anal. Appl.,
354 (2009), 249--262.


\bibitem{H}
T.P.~Hyt\"{o}nen,
The sharp weighted bound for general Calder\'{o}n-Zygmund operators,
Ann. of Math.,
 175 (2012), 1473--1506.

\bibitem{HL}
T.~Hyt\"{o}nen and M.~Lacey,
The $A_p$-$A_\infty$ inequality for general Calder\'{o}n-Zygmund operators,
Indiana Univ. Journal of Math. (to appear).

\bibitem{HLMORSU}
T.~Hyt\"{o}nen, M.~Lacey, H.~Martikainen, T.~Orponen, M.~Reguera, E.~Sawyer, I.~Uriarte-Tuero,
Weak and strong type estimates for maximal truncations of Calder\'{o}n-Zygmund operators on $A_p$ weighted spaces (2011), available at
http://arxiv.org/abs/1103.5229.


\bibitem{HP}
T.~Hyt\"{o}nen and C.~P\'{e}rez,
Sharp weighted bounds involving $A_\infty$,
J. Anal. \& P.D.E. In Press.

\bibitem{LPR}
M.~Lacey, S.~Petermichl, M.~Reguera,
Sharp $A_2$ inequality for Haar shift operators,
Math. Ann.,
 348 (2010), 127--141.

\bibitem{LSU}
M.~Lacey, E.~Sawyer and I.~Uriarte-Tuero,
A characterization of two weight norm inequalities for maximal singular integrals with one
doubling measure,
J. Anal. \& P.D.E. 5(2012), 1--60.

\bibitem{L1}
A.K.~Lerner, On an estimate of Calder\'{o}n-Zygmund operators by dyadic positive
operators,   J. Anal. Math., (to appear).

\bibitem{L2}
A.K.~Lerner,
A simple proof of the $A_2$ conjecture,
Int. Math. Res. Not. 2012; doi:
10.1093/imrn/rns145.

\bibitem{LOPTT}
A.K.~Lerner, S.~Ombrosi, C.~P\'erez,  R.H.~Torres,  R.~Trujillo-Gonz\'alez,
New maximal functions and multiple weights for the multilinear Calder\'on-Zygmund theory,
Adv. Math.,
 220 (2009),  1222--1264.

\bibitem{LS}
K.~Li, K.~Moen and W.~Sun,
Weighted estimates for multilinear maximal functions and Calder\'on-Zygmund operators, preprint.


\bibitem{Lin-Meng-Yang2010}
H.~Lin, Y.~Meng, and D.~Yang.
Weighted estimates for commutators of multilinear
  {C}alder\'on-{Z}ygmund operators with non-doubling measures,
Acta Math. Sci. Ser. B, 30 (2010), 1--18.




\bibitem{PTV}
C.~P\'{e}rez, S.~Treil and A.~Volberg,
On $A_2$ conjecture and corona decomposition of weights,
Available at http://arxiv.org/abs/1006.2630

\bibitem{P1}
S.~Petermichl,
The sharp bound for the Hilbert transform on weighted Lebesgue spaces in terms of the classical
$A_p$ characteristic,
Amer. J. Math.,
 129 (2007), 1355--1375.

\bibitem{P2}
S.~Petermichl,
The sharp weighted bound for the Riesz transforms,
Proc. Amer. Math. Soc.,
 136 (2008), 1237--1249.


\bibitem{Shi-Fu-Lu2011}
S.~Shi, Z.~Fu, and S.~Lu.
Weighted estimates for commutators of one-sided oscillatory
  integral operators,
Front. Math. China, 6 (2011), 507--516.


\bibitem{Xue-Ding2009}
Q.~Xue and Y.~Ding.
Weighted estimates for the multilinear commutators of the
  {L}ittlewood-{P}aley operators,
Sci. China Ser. A, 52 (2009), 1849--1868.


\end{thebibliography}
\end{document}